
\documentclass[a4paper,11pt,reqno]{amsart}
\usepackage{enumerate}
\usepackage{amssymb, amsmath}
\usepackage{mathrsfs}
\usepackage{amscd}
\usepackage{xfrac}
\usepackage[active]{srcltx}
\usepackage{verbatim}
\usepackage[colorlinks,linkcolor={black},citecolor={blue},urlcolor={black}]{hyperref}


\theoremstyle{plain}
\newtheorem{theorem}{Theorem}[section]

\newtheorem{lemma}[theorem]{Lemma}
\newtheorem{proposition}[theorem]{Proposition}

\newtheorem{assumption}[theorem]{Assumption}

\newtheorem*{definition*}{Definition}
\newtheorem*{theorem*}{Theorem}

\theoremstyle{remark}
\newtheorem{remark}[theorem]{Remark}

\newtheorem*{claim*}{Claim}
\newtheorem*{remark*}{Remark}
\newtheorem*{example*}{Example}
\newtheorem*{notation*}{Notation}

\numberwithin{equation}{section}


\def\R{{\mathbb R}}
\def\C{{\mathbb C}}
\def\H{{\mathbb H}}



\newcommand{\eps}{\varepsilon}
\renewcommand{\phi}{\varphi}

\newcommand{\dd}{\; \mathrm{d}}


\newcommand{\bra}[1]{[ {#1}]}

\newcommand{\norm}[1]{\| {#1}\|}

\DeclareMathOperator{\diverg}{div}

\DeclareMathOperator{\Ric}{Ric}

\DeclareMathOperator{\Leb}{Leb}
\DeclareMathOperator{\CC}{cc}
\newcommand{\dcc}{d_{\mathrm{cc}}}


\newcommand{\ddt}{\frac{\mathrm{d}}{\mathrm{d}t}}

\newcommand{\vx}{{{\mathbf X}}}
\newcommand{\vy}{{{\mathbf Y}}}
\newcommand{\vu}{{{\mathbf U}}}

\newcommand{\vv}{{{\mathbf V}}}

\newcommand{\cL}{\mathcal{L}}

\newcommand{\cP}{\mathscr{P}}

\renewcommand{\tilde}{\widetilde}

\DeclareMathOperator{\Vect}{Vect}

\newcommand{\Rm}{\mathrm{Riem}}

\DeclareMathOperator{\Ent}{Ent}
\DeclareMathOperator{\AC}{AC}
\DeclareMathOperator{\vol}{vol}
\DeclareMathOperator{\proj}{pr}


\newcommand{\h}{\mathfrak{h}}

\setcounter{tocdepth}{1}
\parindent=0pt

\begin{document}

\title[]{Smoothing and non-smoothing via a flow tangent to the Ricci flow}

\author{Matthias Erbar} \author{Nicolas Juillet} \address{
  University of Bonn\\
  Institute for Applied Mathematics\\
  Endenicher Allee 60\\
  53115 Bonn\\
  Germany} 
\email{erbar@iam.uni-bonn.de}
\address{Institut de Recherche Math\'ematique Avanc\'ee\\
  UMR 7501\\
 Universit\'e de Strasbourg et CNRS\\
 7 rue Ren\'e  Descartes\\
 67\,000 Strasbourg\\
 France}
\email{nicolas.juillet@math.unistra.fr}

\keywords{Ricci flow, optimal transport, Euclidean cone, Heisenberg group}

\subjclass[2010]{Primary 53C44; Secondary: 49Q20, 51F99, 51K10, 53C17}

 \begin{abstract}
   We study a transformation of metric measure spaces introduced by
   Gigli and Mantegazza consisting in replacing the original
   distance with the length distance induced by the transport
   distance between heat kernel measures. We study the smoothing
   effect of this procedure in two important examples. Firstly, we
   show that in the case of some Euclidean cones, a singularity persists at the apex. Secondly, we generalize the construction to a
   sub-Riemannian manifold, namely the Heisenberg group, and show that
   it regularizes the space instantaneously to a smooth Riemannian
   manifold.
 \end{abstract}


\date{}

\maketitle
 
\section{Introduction}
\label{sec:intro}

There are many ways to deform a Riemannian manifold into a singular
metric space as discussed for instance in the influential essay of
Gromov \cite{Gro}.  We are interested in the opposite question whether
there exists a deformation, intrinsically defined for a wide class of
metric spaces that instantaneously turns the space into a Riemannian
manifold. In this paper, we investigate a method that has been
introduced by Gigli and Mantegazza \cite{GM}. We examine its
regularization properties in two important cases: Euclidean cones and
the Heisenberg group. These are emblematic examples of Alexandrov
spaces and subRiemannian spaces respectively. We also discuss normed
vector spaces where the transformation turns out to be the identity as
an example of Finsler structures.

Before we state our results we briefly explain the main features of
the construction of Gigli and Mantegazza which is based on the
interplay of optimal transport and Ricci curvature. The starting point
is a metric measure space $(X,d,m)$ on which a reasonable notion of
heat kernel can be defined. For $t>0$ a new distance $d_t(x,y)$ is defined
as the length distance induced by the $L^2$ Wasserstein distance
built from $d$ between the heat kernel measures centered at $x$ and
$y$. 

The striking feature of this approach is the following main result of
\cite{GM}: When $(X,d,m)$ is a Riemannian manifold then $d_t$ is induced
by a smooth metric tensor $g_t$ that is tangent to the Ricci flow,
i.e.~$\partial_t|_{t=0}g_t=-2\Ric$ in a weak sense. Gigli and
Mantegazza then generalize this construction to metric measures spaces
with generalized Ricci curvature lower bounds, namely the RCD
condition, which ensures existence of a well-behaved heat kernel.
This can be seen as a first step into constructing a Ricci flow for non-smooth initial data. A related synthetic characterization of super-Ricci flows based on optimal transport has been obtained by McCann and Topping \cite{McCT}.

One can think of $d_t$ as a sort of convolution of the original
distance with the heat kernel. Having the smoothing effect of the heat
equation and Ricci flow in mind, one might expect that this procedure
gives a canonical way of regularizing the metric measure space.

A first study of the regularizing effects of the Gigli-Mantegazza flow
has been performed by Bandara, Lakzian and Munn \cite{BLM} in the case
where the distance $d$ is induced by a metric tensor with low
regularity and isolated conic singularities. It is shown that $d_t$ is
induced by a metric tensor with at least the same regularity away from
the original singular set. The question, what happens at the
singularities has been left unanswered.

In the present paper, we give an answer showing that conic
singularities can persist under the Gigli-Mantegazza
transformation. We analyse in detail the transformation for two
specific Euclidean cones of angle $\pi$ and $\pi/2$. Our results are
the following (see Theorem \ref{thm:singular} and Proposition \ref{prop:convergence1} below).

\begin{theorem}\label{thm:main-cone1}
  Let $C(\pi)$ be the two-dimensional Euclidean cone of angle $\pi$
  and $d$ its distance. For every $t>0$ the convoluted distance $d_t$
  has a conic singularity of angle $\sqrt{2}\pi$ at the apex.

  As $t$ goes to zero, the metric space $(C(\pi),d_t)$ tends to
  $(C(\pi),d)$ pointwise and in the pointed Gromov--Hausdorff
  topology. As $t$ goes to infinity, it tends to the Euclidean cone of angle $\sqrt{2}\pi$ in the pointed Gromov--Hausdorff topology.
\end{theorem}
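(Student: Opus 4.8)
The plan is to compute the convoluted distance $d_t$ on $C(\pi)$ explicitly enough to read off its metric structure near the apex, and then to analyze the two limiting regimes separately.

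The plan is to make the Gigli--Mantegazza construction fully explicit on $C(\pi)$ by exploiting that this cone is the $\Z_2$--quotient $\R^2/\{\pm1\}$, and to read off the local structure of the convoluted metric tensor $g_t$ (whose induced length distance is $d_t$) from the resulting Gaussian formulas. Writing $p\colon\R^2\to C(\pi)$ for the quotient by $z\mapsto-z$, the heat kernel on $C(\pi)$ is obtained by the method of images, so the heat kernel measure $\rho_t^x$ lifts to the $\Z_2$--symmetric measure $\mu_t^x=\tfrac12\bigl(G_t^{\tilde x}+G_t^{-\tilde x}\bigr)$ on $\R^2$, where $G_t^a$ is the planar Gaussian centred at $a$ and $\tilde x$ is a lift of $x$. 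First I would reduce the cone transport problem to an equivariant Euclidean one: optimal plans between $\mu_t^x,\mu_t^y$ may be taken $\Z_2$--invariant, and because the target is symmetric the cone cost $\min(|z-w|,|z+w|)$ can be \emph{unfolded} to the Euclidean cost, giving $W_2^{C(\pi)}(\rho_t^x,\rho_t^y)=W_2^{\R^2}(\mu_t^x,\mu_t^y)$ to the order needed. Since $C(\pi)$ is rotationally symmetric about the apex, $g_t$ must take the form $A(r)\,\mathrm dr^2+B(r)\,\mathrm d\phi^2$ with $\phi\in[0,\pi)$; away from the apex $g_t$ is smooth by the Gigli--Mantegazza theory (cf.\ \cite{GM,BLM}), so the entire content lies in the behaviour of $A,B$ as $r\to0$.

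For the key computation I would expand $\mu_t^x$ to second order in the lift $a=\tilde x$: the odd terms cancel and $\mu_t^x=G_t^0\bigl(1+u_a\bigr)+O(|a|^4)$ with $u_a=\tfrac{1}{8t^2}(a\cdot z)^2-\tfrac{|a|^2}{4t}$, a mean-zero quadratic. The crucial qualitative point is that $\mu_t^x$ depends on position only through $a\otimes a$, hence on the angle through $2\phi$; this \emph{angle doubling} is the geometric origin of the $\sqrt2$. Near the apex all these measures are close to $G_t^0$, so I would linearise $W_2$ in the Otto / weighted $H^{-1}$ sense, $W_2^2\bigl(G_t^0(1+u),G_t^0(1+u')\bigr)\approx\langle u-u',L^{-1}(u-u')\rangle_{L^2(G_t^0)}$, where $L=-\Delta+\tfrac{z}{2t}\cdot\nabla$ is the Ornstein--Uhlenbeck operator with invariant measure $G_t^0$. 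Since $u_a-u_{a'}$ is a degree-two Hermite polynomial it is an eigenfunction of $L$ with eigenvalue $1/t$, so $L^{-1}$ acts as multiplication by $t$, and a Wick/Isserlis computation gives, with $M=a\otimes a-a'\otimes a'$,
\begin{equation*}
W_2^{C(\pi)}(\rho_t^x,\rho_t^y)^2\approx \tfrac{1}{8t}\,\Tr(M^2).
\end{equation*}
Differentiating at $a'=a$ and writing $a=r(\cos\phi,\sin\phi)$ yields $g_t\approx\tfrac{r^2}{4t}\bigl(2\,\mathrm dr^2+r^2\,\mathrm d\phi^2\bigr)$, and the substitution $\rho=r^2/(2\sqrt{2t})$ turns this into $\mathrm d\rho^2+2\rho^2\,\mathrm d\phi^2$ with $\phi\in[0,\pi)$, i.e.\ the Euclidean cone of angle $\sqrt2\,\pi$. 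This is the content of Theorem \ref{thm:singular}.

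Both limits I would deduce from the exact self-similarity of the construction: since $C(\pi)$ is a metric cone and the planar heat kernel is $2$-homogeneous, the dilation $\delta_\lambda$ satisfies $d_t(x,y)=\sqrt t\,d_1\bigl(\delta_{1/\sqrt t}x,\delta_{1/\sqrt t}y\bigr)$. Hence $(C(\pi),d_t)$ is exactly the rescaling of $(C(\pi),d_1)$ that zooms into the apex as $t\to\infty$ and out to infinity as $t\to0$. By the previous step the metric tangent cone of $(C(\pi),d_1)$ at the apex is the cone of angle $\sqrt2\,\pi$, which gives the $t\to\infty$ limit; whereas for $r\gg\sqrt t$ the two image Gaussians in $\mu_t^x$ decouple and $g_t$ converges to the flat cone metric $\mathrm dr^2+r^2\,\mathrm d\phi^2$, so the asymptotic (blow-down) cone of $(C(\pi),d_1)$ is $(C(\pi),d)$, which gives the $t\to0$ limit; the same far-field estimate yields $d_t\to d$ pointwise.

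The main obstacle is to make the near-apex analysis rigorous and uniform: justifying the $W_2$ linearisation with a controlled error as $a,a'\to0$, verifying the equivariant unfolding of the cone cost (which is precisely where the conic geometry enters, and where a naive Euclidean computation could in principle be cheated by transport across the apex), and controlling the $O(|a|^4)$ remainder well enough to identify the metric tangent cone rather than merely a pointwise angle. For the limits, upgrading pointwise convergence of $d_t$ to pointed Gromov--Hausdorff convergence requires uniform two-sided comparisons of $d_t$ with the model cone distances on balls of fixed radius, i.e.\ equicontinuity of the rescaled metrics, which is the remaining analytic input of Proposition \ref{prop:convergence1}.
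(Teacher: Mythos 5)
Your proposal is correct in substance and reproduces the paper's answers exactly --- the unfolding to $\Z_2$-symmetric Gaussian mixtures on $\R^2$, the warped-product form of the metric, the scaling identity $d_t(x,y)=\sqrt t\, d_1(t^{-1/2}x,t^{-1/2}y)$ of Lemma \ref{lem:scaling}, and the near-apex tensor $\frac{r^2}{4t}\bigl(2\,\mathrm{d}r^2+r^2\,\mathrm{d}\phi^2\bigr)$, which matches the paper's $R(r)\sim r^2/2$, $A(r)\sim r^2/4$ --- but your key computation is genuinely different from the paper's. The paper works at the arc level rather than the chord level: the unfolding is proved to be an \emph{exact} isometry (Lemma \ref{lem:lifting}, so your hedge ``to the order needed'' is unnecessary), and the tensor is defined as the $L^2(\mu_{x_r})$-norm of the velocity field solving the continuity equation at the actual bimodal mixture; the radial field is solved explicitly, the angular one is approximated by $\nabla\psi^r$ with $\psi^r=\frac r4 y_1y_2$, and the error is controlled by the Poincar\'e/Riesz estimate of Lemma \ref{lem:H-1vsL2} --- which is precisely the rigorous substitute for your heuristic $H^{-1}$ linearisation. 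You instead expand the density around the single centred Gaussian into the second Wiener chaos and use the Ornstein--Uhlenbeck spectrum (eigenvalue $1/t$ on degree-two Hermite polynomials) plus Wick calculus to obtain the closed form $\frac1{8t}\Tr(M^2)$ in one stroke; this is slicker, treats both coefficients simultaneously, and makes the angle-doubling origin of $\sqrt2$ transparent, but it only sees the regime $r\ll\sqrt t$ and requires differentiating a chordal expansion in $a$, whereas the paper's tangent-vector computation avoids that and yields the tensor for all $r$, which is needed e.g.\ for $\rho_t(r)=\int_0^r\sqrt{R(s/\sqrt t)}\,\mathrm{d}s$. Two caveats. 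First, your appeal to \cite{GM,BLM} for smoothness of the tensor away from the apex overstates what is available: the paper's Remark \ref{rem:smoothness} explicitly leaves smoothness of $A$ open (the cone is non-compact), and the theorem needs only the warped form with bounded coefficients established in Proposition \ref{prop:warp}. Second, for the $t\to0$ limit your far-field/blow-down route is heavier than necessary: the paper simply sandwiches $\tilde d_t\le d_t\le d$ using the heat-flow contraction \eqref{eq:contractivity} and invokes the equicontinuity argument of Proposition \ref{prop:convergence}, which settles both the pointwise and the pointed Gromov--Hausdorff convergence that you flag as a remaining input. Finally, the ``more than a pointwise angle'' obstacle you correctly identify for the tangent cone (and hence the $t\to\infty$ limit) is resolved in the paper by the reparametrisation $(r,\theta)\mapsto(\rho_t(r),\theta)$ giving $\bar R=1$, $\bar A\to2$, combined with the curve-restriction Lemma \ref{lem:biLip-cone} to handle curves through the apex.
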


In fact, it turns out that for fixed $\theta>0$ all spaces
$(C(\theta),d_t)$ for $t>0$ are isometric up to a multiplicative
constant. An isometry is induced by the radial dilation $x\in C(\theta)\mapsto
t^{-1/2} x$. Our second result shows that for the cone of angle
$\pi/2$ the behavior of the singularity is even worse (see Theorem \ref{thm:singular2} and Proposition
  \ref{prop:convergence2} below).

\begin{theorem}\label{thm:main-cone2}
  Let $C(\pi/2)$ be the two dimensional Euclidean cone of angle
  $\pi/2$ and $d$ its distance. For every $t>0$, the distance $d_t$
  has a conic singularity of angle zero at the apex.  

  As $t$ goes to zero, the metric space $(C(\pi/2),d_t)$ tends to
  $(C(\pi/2),d)$ pointwise and in the pointed Gromov--Hausdorff
  topology. As $t$ goes to infinity, it tends to $\R^+$ with the Euclidean distance in the pointed Gromov--Hausdorff sense.
\end{theorem}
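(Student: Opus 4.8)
The plan is to exploit the scaling symmetry recorded after Theorem~\ref{thm:main-cone1} and then to run the heat-kernel analysis exactly as for $C(\pi)$, the only---but decisive---difference being the order of the symmetry group fixing the apex. Since the radial dilation $\delta_\lambda\colon x\mapsto\lambda x$ satisfies $d_t(\lambda x,\lambda y)=\lambda\, d_{t/\lambda^2}(x,y)$, every space $(C(\pi/2),d_t)$ is isometric to $(C(\pi/2),\sqrt t\,d_1)$, so I may fix $t=1$. Under this identification the two limits become statements about a single space: as $t\to0$ the metric is shrunk ($\sqrt t\to0$), so the limit is the asymptotic cone (tangent cone at infinity) of $(C(\pi/2),d_1)$, while as $t\to\infty$ it is magnified, so the limit is the tangent cone at the apex $o$. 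The assertion that $d_t$ has a conic singularity of angle zero is precisely the assertion that this tangent cone at $o$ is the ray $\R^+$. Thus the whole statement reduces to identifying these two blow-up/blow-down limits, together with the soft consistency fact that $d_t\to d$ pointwise as $t\to0$ (heat-kernel concentration forces $W_2(\mu_t^x,\mu_t^y)\to d(x,y)$).

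Next I would set up the explicit heat kernel. Writing $C(\pi/2)=\R^2/\langle R\rangle$ with $R$ the rotation by $\pi/2$, the method of images gives $p_t^C(\bar x,\bar z)=\sum_{k=0}^3 p_t^{\R^2}(x,R^k z)$, and $\mu_t^{\bar x}$ is the pushforward under the quotient projection of the Euclidean Gaussian centred at a lift $x$ of $\bar x$; since the projection identifies the four lifts, $\mu_t^{\bar x}$ is equally the pushforward of the $\langle R\rangle$-symmetric mixture $\tfrac14\sum_{k=0}^3 p_t^{\R^2}(R^kx,\cdot)$, and working with the latter makes the symmetry manifest. As for $C(\pi)$, this reduces the computation of $W_2$ between heat-kernel measures, and of the induced metric tensor $g_1$ on the flat manifold $C(\pi/2)\setminus\{o\}$ (which exists and is smooth by the Gigli--Mantegazza theory), to an equivariant Gaussian transport problem upstairs. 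By the rotational and reflection symmetries of the cone I may write $g_1=A(r)\,\mathrm{d}r^2+B(r)\,\mathrm{d}\psi^2$ in polar coordinates $(r,\psi)$, and the apex angle is $\lim_{r\to0}L(r)/\rho(r)$ with $\rho(r)=\int_0^r\sqrt{A}$ and $L(r)=\tfrac\pi2\sqrt{B(r)}$.

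The heart of the matter is the near-apex expansion of $\mu_1^{\bar x}$, and here the four-fold symmetry changes the answer. Expanding $\tfrac14\sum_k p_1^{\R^2}(R^kx,y)$ for small $r=|x|$ in angular harmonics $e^{im(\alpha-\psi)}$ (with $\alpha$ the angle of $y$), only harmonics with $4\mid m$ survive the averaging over $\langle R\rangle$: the isotropic part $m=0$ contributes at order $r^2$, whereas the lowest $\psi$-dependent harmonic $m=4$ first appears at order $r^4$. Consequently the radial variation of $\mu_1^{\bar x}$ is of order $r$ while its angular ($\partial_\psi$) variation is only of order $r^4$, giving $\sqrt{A(r)}\sim c\,r$ and $\sqrt{B(r)}=O(r^4)$ in the $H^{-1}(\mu_1^{\bar x})$ norm, hence $L(r)/\rho(r)=O(r^4)/O(r^2)\to0$: the apex angle is zero. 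This is exactly the mechanism absent for $C(\pi)$, where the group is only $\Z/2$, the first angular harmonic $m=2$ already appears at order $r^2$ alongside the isotropic part, and one instead obtains the finite angle $\sqrt2\,\pi$. Feeding $\sqrt A\sim cr$ and $\sqrt B=O(r^4)$ into the blow-up at $o$ collapses every circle $\{r=\mathrm{const}\}$ to a point while retaining the radial variable, so the tangent cone at the apex---equivalently the $t\to\infty$ limit---is $\R^+$. For the asymptotic cone I would argue that when $r\gg1$ the four Gaussian images are essentially mutually singular, so the transport decouples image by image and $W_2(\mu_1^{\bar x},\mu_1^{\bar y})$ agrees with $d(\bar x,\bar y)$ up to lower order; since $(C(\pi/2),d)$ is itself a cone, its blow-down is itself, yielding the $t\to0$ limit $(C(\pi/2),d)$.

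The main obstacle will be turning the formal harmonic expansion into rigorous, uniform control of the genuinely nonlinear quantity $W_2(\mu_1^{\bar x},\mu_1^{\bar y})$: one must justify the $H^{-1}(\mu_1^{\bar x})$ linearisation of $W_2$ and bound the higher-order remainders uniformly as $r\to0$, verify that the equivariant lift computes $W_2^C$ exactly (so that no mass is cheaper to transport through a competing image), and---most delicately---upgrade the pointwise orders of $A$ and $B$ into genuine pointed Gromov--Hausdorff convergence of the rescaled spaces to $\R^+$, which requires controlling the metric at all scales near $o$ rather than only infinitesimally. Granting this, the remaining convergence and pointwise statements are comparatively routine.
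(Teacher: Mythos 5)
Your proposal is correct in substance and reproduces the paper's architecture: reduction to $t=1$ by the scaling relation (the analogue of Lemma~\ref{lem:scaling}), lifting to $\C$ as the four-fold symmetric Gaussian mixture (the analogue of Lemma~\ref{lem:lifting}), the warped form $g^1=R(r)\,\mathrm{d}r^2+r^2A(r)\,\mathrm{d}\alpha^2$ of Proposition~\ref{prop:warp2}, and the apex angle computed as $\lim_{r\to0} l_1(r)/\rho_1(r)$. Your orders match the paper's exactly: $\sqrt{R(r)}\sim r/2$ and $r\sqrt{A(r)}=O(r^4)$, i.e.\ your $\sqrt{B}=O(r^4)$. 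The one genuinely different organizing idea is your angular-harmonic counting, and it is worth noting that it is precisely the conceptual skeleton of the paper's computation: the explicit test function $\psi^r(y)=\frac{r^3}{96}(y_1^3y_2-y_1y_2^3)$ used in the proof of Theorem~\ref{thm:singular2} is proportional to $\mathrm{Im}\big((y_1+iy_2)^4\big)$, the lowest harmonic surviving the $\Z/4$-averaging (just as $y_1y_2\propto\mathrm{Im}(y^2)$ appears for $C(\pi)$), so your ``$4\mid m$'' mechanism is exactly why the source $\beta_r$ is $O(r^3)$ per unit length rather than $O(r)$. Two of the obstacles you flag are, however, resolved more cheaply in the paper than your plan suggests. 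First, no linearisation of $W_2$ with remainder bounds is needed: the tensor is \emph{defined} through the continuity-equation characterization of the metric speed of $s\mapsto\mu^1_{z_s}$ together with the uniqueness and $H^{-1}$-stability of Lemma~\ref{lem:H-1vsL2} (with Poincar\'e constant uniform in $r$), so the identification $|\dot p_s|_1^2=g(\dot p_s)$ is exact along Lipschitz curves; what genuinely needs care is restricting the length minimization to curves avoiding the apex, handled by the analogues of Lemmas~\ref{lem:biLip} and~\ref{lem:biLip-cone}. Second, the $t\to0$ pointed Gromov--Hausdorff convergence is obtained softly (Proposition~\ref{prop:convergence2}) from the squeeze $\tilde d_t\le d_t\le d$ (contractivity of the lifted heat flow) plus the equicontinuity argument of Proposition~\ref{prop:convergence}; your identification of this limit with the blow-down of $(C(\pi/2),d_1)$ is correct, but your proposed proof via mutual singularity of the four Gaussian images at large $r$ would demand quantitative large-$r$ asymptotics that the paper entirely avoids. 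Finally, note that the lower bound $\rho_1(r)\asymp r^2$, which your harmonic count only suggests, is obtained in the paper \emph{exactly}: a rotation by $\pi/4$ turns the lifted measure into a product of two one-dimensional mixtures, so $R$ is computed in closed form as in Theorem~\ref{thm:singular}; with that in hand, your passage to the tangent cone $\R^+$ at $o$ (and, by scaling, to the $t\to\infty$ limit) is carried out in the paper by the reparametrization $\bar r=\rho(r)$, under which $\bar A\to0$, matching your ``circles collapse faster than the radius'' picture.
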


The reason why we focus on these two specific cones is that they can be
conveniently represented as quotients of $\R^2$ under rotation by
$\pi$ and $\pi/2$ respectively. It turns out that the convoluted
distance $d_t$ is the length distance induced by the $L^2$
Wasserstein distance between a mixtures of two (respectively four)
rotated copies of Gaussian measures with variance $2t$.

A corollary of the previous theorem is that the space $(C(\pi/2),d_t)$
is not an Alexandrov space even though $C(\pi/2)$ is. In fact, in
Alexandrov spaces a triangle with one angle zero is flat, which is
wrong for $(C(\pi/2),d_t)$. This negative result has to be compared to
positive results by Takatsu \cite{Tak}, where it is shown that the
subspace made of all Gaussian measures in the Wasserstein space over
Euclidean space is an Alexandrov space. Note moreover, that the
Wasserstein space over a non-negatively curved Alexandrov space is
again a non-negatively curved Alexandrov space \cite[Proposition
I.2.10]{S06} and that many subspaces of finite dimensional Alexandrov
spaces are known to be Alexandrov spaces, for instance convex
hypersurfaces in Euclidean spaces or Riemannian manifolds of sectional
curvature bounded below \cite{AKP, Buj, Milka}.

Given the relation of the Gigli--Mantagazza flow with the Ricci flow,
the convergence of $d_t$ to the original cone distance $d$ has to be
compared with the fact that any Euclidean cones of dimension 2 can be
obtained as the backward limit of classical solutions to the Ricci
flow \cite[Chapter 4.5]{CLN}. See also \cite{SchSim, Deru} for related
results in higher dimension.

\medskip

Our second contribution in this paper is an investigation of the
Gigli--Mantegazza flow applied to the first Heisenberg group equipped
with the Carnot-Carath\'eodory distance. The Heisenberg group is one
of the simplest examples of a non trivial Carnot group, i.e a
nilpotent stratified Lie groups with a left-invariant metric on the
first strata, and of a non trivial subRiemannian manifold. These
classes are of course connected: As proved by Bella\" iche
\cite{Bella}, the tangent cones at points of subRiemannian spaces are
Carnot groups. The differentiable structure of the Heisenberg group is
the one of $\R^3$ and the group structure is given in coordinates
$(x,y,u)$ by $(x,y,u).(x',y',u')=(x+x',y+y',u+u'+(1/2)(xy'-x'y))$.

The Carnot--Carath\'eodory distance is obtained by minimizing the
length of curves that are tangent to the 2-dimensional horizontal
subbundle spanned by $X=\partial_x-\frac{y}{2}\partial_u$ and
$Y=\partial_y+\frac{x}{2}\partial_u$. A standard way to approximate
this distance is to consider for $\eps>0$ the Riemannian distance
$d_{\Rm(\eps)}$ obtained by considering $X,Y,\eps \partial_u$ as an
orthonormal frame. In fact, this penalization principle permits to see
any subRiemannian manifold as a limit of Riemannian manifolds. Note
that $(\H,d_{cc})$ does not satisfy a generalized lower Ricci
curvature bound in the sense of the RCD condition. Therefore we
slightly generalize the construction in \cite{GM} and obtain the
following result (see Theorem
  \ref{thm:Heisenberg-main} and Proposition \ref{prop:approximation-heisenberg}
  below).

\begin{theorem}\label{thm:main-H}
  Let $(\H,d_{cc})$ be the first Heisenberg group equipped with the
  Carnot--Carath\'eodory distance. For $t>0$, the convoluted distance
  $d_t$ coincides with $Kd_{\Rm(\kappa\sqrt{t})}$, for some constants
  $K,\kappa$ satisfying $K\geq 2$ and $K/\kappa<\sqrt{2}$.

  As $t$ goes to zero the distance $d_t$ converges to $Kd_{cc}$
  pointwise. In the pointed Gromov--Hausdorff topology the space
  $(\H,d_t)$ converges to $(\H,d_{cc})$.
\end{theorem}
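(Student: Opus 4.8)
The plan is to extract the entire statement from the infinitesimal behaviour of the Wasserstein distance between the heat kernel measures $\rho_t^x=p_t(x,\cdot)\,m$ and to exploit the symmetry group of $\H$ to pin that behaviour down completely. First I would record three invariances of the construction. Since the subelliptic heat kernel is left invariant, $p_t(gx,gy)=p_t(x,y)$, the left translations act as isometries of $(\H,d_t)$; since $\dcc$, the Haar measure and the sub-Laplacian $X^2+Y^2$ all transform homogeneously under the Carnot dilations $\delta_\lambda(x,y,u)=(\lambda x,\lambda y,\lambda^2u)$, one obtains the parabolic scaling identity $d_t(x,y)=\sqrt t\,d_1(\delta_{1/\sqrt t}x,\delta_{1/\sqrt t}y)$; and the planar rotations $(x,y,u)\mapsto(R_\theta(x,y),u)$ are automorphisms (the symplectic term $xy'-x'y$ is rotation invariant) preserving the horizontal structure and hence the heat kernel. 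Consequently $d_t$ is a left-invariant length distance whose infinitesimal metric $g_t$ is determined by its value at the origin, where rotation invariance forces the block-diagonal form $g_t=a(t)\big((X^*)^2+(Y^*)^2\big)+b(t)\,(\partial_u^*)^2$. This is exactly the shape of the approximating tensor $g_{\Rm(\eps)}$ up to a scalar, so matching coefficients gives $g_t=a(t)\,g_{\Rm(\sqrt{a(t)/b(t)})}$ and $d_t=\sqrt{a(t)}\,d_{\Rm(\sqrt{a(t)/b(t)})}$. Feeding this into the dilation scaling forces $a(t)=a(1)$ and $b(t)=b(1)/t$, hence $d_t=K\,d_{\Rm(\kappa\sqrt t)}$ with $K=\sqrt{a(1)}$ and $\kappa=\sqrt{a(1)/b(1)}$; in particular it suffices to work at $t=1$ and to determine the two numbers $a(1),b(1)$.

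To compute $a(1)$ and $b(1)$ I would use the Benamou--Brenier/Otto description of the infinitesimal cost over $(\H,\dcc)$, in which velocities are horizontal and measured by $(X\phi)^2+(Y\phi)^2$. Moving the centre along the integral curve of $V\in\{X,Y,\partial_u\}$ at the origin changes the density $f=p_1(e,\cdot)$ of $\rho_1^x$ by $-\tilde V f$, where $\tilde V$ is the right-invariant field with $\tilde V_e=V$, and the continuity equation then yields
\[
 g_1(V,V)=\min\Big\{\int_{\H}\big[(X\phi)^2+(Y\phi)^2\big]\,f\dd m \ :\ \diverg_H\!\big(f\,\grad_H\phi\big)=\tilde V f\Big\},
\]
an elliptic problem weighted by the explicit Heisenberg heat kernel. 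The decisive qualitative point, namely that $d_t$ is genuinely Riemannian rather than subRiemannian, is that this problem is solvable with finite energy even for the vertical direction $V=\partial_u$: since $[X,Y]=\partial_u$, the right-hand side $\partial_u f=(XY-YX)f$ lies in the range of the horizontal operators, whence $b(1)\in(0,\infty)$. This is precisely the smoothing mechanism, and it is where the step-two bracket-generating structure of $\H$ enters.

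The main obstacle is the quantitative control of these two energies, which is exactly what the inequalities $K\ge 2$ and $K/\kappa<\sqrt2$ encode: with the normalisation above $K=\sqrt{a(1)}$ and $K/\kappa=\sqrt{b(1)}$, so they amount to a sharp-enough lower bound on $a(1)$ and upper bound on $b(1)$ for the solutions of the two elliptic problems. I would obtain them by taking the partial Fourier transform in the central variable $u$, which turns $X,Y$ into $\partial_x-\tfrac{i\lambda}{2}y,\ \partial_y+\tfrac{i\lambda}{2}x$ and the sub-Laplacian into a family (in $\lambda$) of rescaled harmonic oscillators on $\R^2$, making the heat kernel a Mehler-type kernel and the potentials $\phi$ explicit; the energies then reduce to one-dimensional spectral integrals that can be estimated. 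A secondary technical point, to be settled first, is that the generalised Gigli--Mantegazza construction is well posed on $(\H,\dcc)$ despite the failure of the $\RCDim$ condition and that the resulting $d_t$ is the length distance of the smooth tensor $g_t$; here the smoothness, homogeneity and left invariance of the Heisenberg heat kernel should make the identification of $d_t$ with a bona fide Riemannian distance routine once $g_t$ is known to be finite and non-degenerate.

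Finally, the convergence statements follow formally from the identity $d_t=K\,d_{\Rm(\kappa\sqrt t)}$. As $t\to0$ one has $\kappa\sqrt t\to0$, and the standard penalisation principle gives $d_{\Rm(\eps)}\to\dcc$ pointwise (indeed locally uniformly and monotonically), so $d_t\to K\dcc$ pointwise. Since the dilation $\delta_{1/K}$ is an isometry from $(\H,\dcc)$ onto $(\H,K\dcc)$, and the distances $d_t$ converge locally uniformly on the fixed pointed space $(\H,e)$ to $K\dcc$, one concludes pointed Gromov--Hausdorff convergence of $(\H,d_t)$ to $(\H,\dcc)$.
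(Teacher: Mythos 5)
Your overall architecture is the paper's: the reduction by left invariance, rotation invariance and the parabolic scaling $\delta_\lambda$ to a two-parameter family $d_t=K\,d_{\Rm(\kappa\sqrt{t})}$ (Proposition \ref{scaling}), the definition of the tensor through the continuity equation with source $-(a\hat\vx+b\hat\vy+c\hat\vu)\h_1(q^{-1}\cdot)$ minimized in the horizontal Benamou--Brenier energy (Lemma \ref{lem:Heisenberg-H-1vsL2} together with Proposition \ref{prop:heisenberg-ac-curves}), the bracket mechanism $\vu=[\vx,\vy]$ producing a finite vertical coefficient, and the dilation $\delta_{1/K}$ for the pointed Gromov--Hausdorff limit (Proposition \ref{prop:approximation-heisenberg}) all coincide with the proof of Theorem \ref{thm:Heisenberg-main}.

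The genuine gap is in the two numerical inequalities $K\geq 2$ and $K/\kappa<\sqrt{2}$, which you defer to ``one-dimensional spectral integrals that can be estimated''; this is precisely where the quantitative content of the theorem lies, and for $K$ the plan is not viable as a routine computation. With your normalization $K^2=a(1)$ is the minimal horizontal energy for translating the kernel in the $\vx$-direction; the paper shows (Remark \ref{rem:constant_K}) that $K$ equals the constant $C_2=\sup_{p\neq q}\dcc(p,q)^{-1}W_\H(\nu^t_p,\nu^t_q)$, which by Kuwada duality is the optimal constant in the Driver--Melcher gradient estimate, and whose exact value is an \emph{open problem} (conjecturally $C_2=2$). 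An explicit Mehler-kernel evaluation of $a(1)$ would thus settle that conjecture, so it cannot be ``estimated'' in passing; the bound $K\geq2$ has to be imported from \cite{DM05} (as the paper does), and your proposal contains no substitute mechanism. For the vertical direction your bracket argument yields only $b(1)\in(0,\infty)$, whereas the strict inequality $K/\kappa<\sqrt{2}$ requires more: the paper (Remark \ref{rem:elevator}) exhibits the explicit horizontal field $\vv_s=(\vy\log\rho_s)\vx-(\vx\log\rho_s)\vy$ solving the continuity equation along $q_s=(0,0,s)$, computes its energy to be exactly $2/t$ via \eqref{finite_entropy}, and observes that $\vv_s$ is \emph{not} of gradient type, so that projection onto $T_{\nu}\cP_2(\H)$ strictly decreases the norm; without this observation (or a genuinely completed computation) you only obtain $K/\kappa\leq\sqrt{2}$. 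A further point you dismiss as routine does carry content: since the infimum defining $d_t$ runs over the a priori larger class of $\tilde d_t$-absolutely continuous curves, identifying $d_t$ with the Riemannian distance of $g$ requires showing every $\tilde d_t$-Lipschitz curve is Euclidean Lipschitz, which the paper handles at the end of the proof of Theorem \ref{thm:Heisenberg-main}.
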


The striking part of the theorem is that also non-horizontal curve can
have finite length after lifting them to the Wasserstein space built
from $d_{cc}$ via the heat kernel and thus $d_t$ becomes a Riemannian
distance. We believe that this behavior also holds for more general
contact manifolds. However, let us stress the fact that even for the
Heisenberg group the distance $d_t$ does not converge pointwise to
$d_{cc}$ as $t$ goes to zero. Convergence in pointed Gromov--Hausdorff
sense only holds due to the high amount of symmetry of the space, in
particular, due to the fact that the dilation $(x,y,u)\mapsto
(Kx,Ky,K^2u)$ is an isometry between $(\H,K d_{cc})$ and
$(\H,d_{cc})$. The Gromov--Hausdorff convergence probably does not
hold for generic contact manifolds of dimension 3 with a subRiemannian
metric on the nonholonomic contact distribution. Finally, note that
also the Heisenberg group can be obtained as a backward limit of
classical solution to the Ricci flow as was shown by Cao and
Saloff-Coste \cite{CSc}.

\medskip

Three sections follow this introduction. The next section contains the
construction of the convoluted distance $d_t$ in a general setting. As
a first example we discuss the case of normed spaces. In Section
\ref{sec:cone} we establish our results on the Euclidean cones
$C(\pi)$ and $C(\pi/2)$. Section \ref{sec:subriem} is devoted to the
Heisenberg group.

\subsection*{Acknowledgements}
The authors would like to thank Michel Bonnefont, Thomas Richard and
Andr\'e Schlichting for stimulating discussions on this work and
related topics. Part of this work was accomplished while the authors
were enjoying the hospitality of the Hausdorff Research Institute for
Mathematics in Bonn during the Junior Trimester Program on Optimal
Transport. They would like to thank HIM for its support and the
inspiring atmosphere.  M.E. gratefully acknowledges support by the
German Research Foundation through the Collaborative Research Center
1060 \emph{The Mathematics of Emergent Effects} and the Hausdorff
Center for Mathematics.  N.J. is partially supported by the Programme
ANR JCJC GMT (ANR 2011 JS01 011 01).

\section{Construction of the flow}
\label{sec:construction}

In this section we present the construction of the convoluted distance
$d_t$ in a general framework. The reason is that the framework of RCD
spaces considered in \cite{GM} (see subsection \ref{sec:RiemRCD}) does not
cover the Heisenberg group. Moreover, unlike in \cite{GM} the spaces of the present paper are non-compact

\subsection{Preliminaries}

Let $(X,d)$ be a Polish metric space. Recall that for $p\geq 1$ a curve
$(\gamma)_{t\in[0,T]}$ in $(X,d)$ is called $p$-absolutely continuous, for
short $\gamma\in \AC^p\big([0,T],(X,d)\big)$, if there exist a
function $m\in L^p(0,T)$ such that for any $0\leq s\leq t\leq T$:
\begin{align*}
d(\gamma(s),\gamma(t)) ~\leq~ \int_s^tm(r)\dd r\;.
\end{align*}
For $p=1$, we may simply call it an absolutely continuous curve. In
this case the metric derivative defined by
\begin{align*}
  |\dot\gamma_s| ~=~ \lim\limits_{h\to0}\frac{d(\gamma_{s+h},\gamma_s)}{h}
\end{align*}
exists for a.e. $s\in(0,T)$ and is the minimal $m$ as above, see
\cite[Thm.~ 1.2.1]{AGS08}. Lipschitz curves with respect to a distance
$d$ are called $d$-Lipschitz curves, they are locally $p$-absolutely
continuous for every $p\geq 1$.

We denote by $\cP(X)$ the set of Borel probability measures. The
subset of measures with finite second moment, i.e.~satisfying 
\begin{align*}
  \int d(x_0,x)^2\dd\mu(x)<\infty
\end{align*}
for some, hence any $x_0\in X$ will be denoted by $\cP_2(X)$. Given
$\mu,\nu\in\cP_2(X)$ their $L^2$-Wasserstein distance is defined by
\begin{align*}
  W(\mu,\nu) = \inf\limits_\pi \sqrt{\int d(x,y)^2\dd \pi(x,y)}\;,
\end{align*}
where the infimum is taken over all couplings $\pi$ of $\mu$ and
$\nu$. Recall that $\big(\cP_2(X),W\big)$ is again a Polish metric
space. Sometimes we will write $W_{X}$ or $W_{(X,d)}$ to avoid
confusion about the underlying metric space $(X,d)$.

\subsection{Construction of the flow}

Recall that $(X,d)$ is a metric Polish space. Let us assume in addition
that it is proper, i.e.~closed balls are compact, and that it is a length space, i.e.~
we have
\begin{align*}
  d(x,y) = \inf\limits_{\gamma}\int_0^T|\dot\gamma_s|\dd s\;,
\end{align*}
where the infimum is taken over all absolutely continuous curves
$\gamma$ connecting $x$ to $y$. Notice that $(X,d)$ is in fact geodesic,
i.e.~each pair of points can be joint by a curve whose length equals
$d(x,y)$.

The construction is based on a family of maps from $X$ to $\cP_2(X)$ satisfying some properties that we list now. One should keep in mind that in the examples coming later the points are mapped to heat kernel measures.

\begin{assumption}\label{ass:iota}
  There exists a family $(\iota_t)_{t\geq0}$ of maps
  $\iota_t:X\to\cP_2(X)$ with the following properties:
  \begin{itemize}
  \item $\iota_0(x)=\delta_x$ for all $x\in X$,
  \item $\iota_t$ is injective for all $t\geq0$,
  \item $\iota_t$ is Lipschitz, more precisely, there exist constants $C_t>0$ such
  that
  \begin{align}\label{eq:i-Lip}
    W\big(\iota_t(x),\iota_t(y)\big) ~\leq~ C_t d(x,y)\quad \forall x,y\in X\;,
  \end{align}
  and $t\mapsto C_t$ is locally bounded from above,
  \item the curve $[0,\infty)\ni t\mapsto \iota_t(x)$
  is continuous with respect to $W$ for all $x\in X$.
  \end{itemize}
 \end{assumption}

We introduce a new family of distance functions $\tilde d_t:X\times
X\to[0,\infty)$ for $t\geq 0$ given by
\begin{align*}
  \tilde d_t(x,y) ~=~ W\big(\iota_t(x),\iota_t(y)\big)\;.
\end{align*}
As $W$ is a distance it follows from the injectivity of $\iota_t$ that
$\tilde d_t$ is also a distance. It is the \emph{chord} distance
induced by the embedding $\iota_t$. The main object of study here will
be the corresponding \emph{arc} distance, i.e.~the length distance
induced by $\tilde d_t$, denoted by $d_t$. More precisely, we define
for $t\geq 0$ and $x,y\in X$:
\begin{align}\label{eq:def-dt}
  d_t(x,y) ~=~ \inf\limits_\gamma \int_0^T|\dot\gamma_s|_t \dd s\;,
\end{align}
where the infimum is taken over all curves
$\gamma\in\AC\big([0,T];(X,\tilde d_t)\big)$ such that
$\gamma_0=x,\gamma_T=y$ and $|\dot\gamma_s|_t$ denotes the metric
derivative with respect to $\tilde d_t$. Note that \eqref{eq:i-Lip}
implies that
\begin{align}\label{eq:d-t-Lip}
  d_t(x,y) ~\leq~ C_t d(x,y) \qquad\forall x,y\in X\;.
\end{align}
Indeed, for any curve $(\gamma_s)_s$ that is absolutely continuous
with respect to~$d$ its metric derivative with respect to~$d$ is
bounded above as $|\dot\gamma_s|_t\leq C_t|\dot \gamma_s|$. The claim
then follows by integrating in $s$ and taking the infimum over all
such curves $(\gamma_s)_s$ noting that they are also absolutely
continuous with respect to~$\tilde d_t$ and that $(X,d)$ is a length
space.

\begin{remark}\label{rem:def-dt} 
  This construction is slightly different from the one in \cite{GM},
  where the infimum in the definition of $d_t$ is taken over $\gamma$
  in $\AC\big([0,T];(X,d)\big)$ which is a subset of
  $\AC\big([0,T];(X,\tilde d_t)\big)$ by the Lipschitz assumption
  \eqref{eq:i-Lip}. Allowing curves in the latter larger class will be
  crucial when applying the construction in the case of the Heisenberg
  group in Section \ref{sec:subriem}. In the case of the Euclidean
  cones $C(\pi), C(\pi/2)$ discussed in Section \ref{sec:cone}, we
  show in Lemma \ref{lem:biLip-cone} that the infima over both classes
  of curves agree so that we are consistent with the construction in
  \cite{GM}.
\end{remark}

\begin{remark}\label{rem:def-dt2}
  Note that the value of the infimum in \eqref{eq:def-dt} does not
  change, if we restrict the infimum to $\tilde d_t$-Lipschitz
  curves. Indeed, the right hand side of \eqref{eq:def-dt} is
  invariant by reparametrizreparametrizationation and every absolutely continuous curve
  can be reparametrized as a Lipschitz curve, see for instance
  \cite[Lem. 1.1.4]{AGS08}.
\end{remark}

We can reformulate the definition of $d_t$ as follows. Given an
absolutely continuous curve $(\gamma_s)_{s\in[0,T]}$ in
$(X,\tilde d_t)$ we obtain an absolutely continuous curve
$(\mu_{\gamma_s}^t)_{s\in[0,T]}$ in $\big(\cP_2(X),W\big)$ by setting
$\mu^t_{\gamma_s}=\iota_t(\gamma_s)$. Then we have
\begin{align*}
 d_t(x,y)~=~\inf\limits_\gamma\int_0^T|\dot\mu^t_{\gamma_s}|\dd s\;,
\end{align*}
where $|\dot\mu^t_{\gamma_s}|$ denotes the metric derivative with
respect to $W$. Another equivalent formulation is
\begin{align}\label{partition}
d_t(x,y)=\inf\sup\sum_{i=0}^{N-1}\tilde{d_t}(\gamma_{s_i},\gamma_{s_{i+1}})= \inf\sup\sum_{i=0}^{N-1}W(\mu^t_{\gamma_{s_i}},\mu^t_{\gamma_{s_{i+1}}}),
\end{align}
the supremum being taken over all partitions
$0=s_0<s_1<\cdots<s_N=1$ and the infimum over all continuous curves
$(\gamma_s)_{s\in[0,1]}$ connecting $x$ to $y$.

In this general setup we have the following continuity properties.

\begin{proposition}\label{prop:convergence}
  For all $x,y\in X$, the curve $[0,\infty)\ni t\mapsto
  \tilde d_t(x,y)$ is continuous and the curve $ t\mapsto
  (X,\tilde d_t)$ is continuous with respect to the pointed
  Gromov--Hausdorff convergence. 
  Moreover, assume in addition to Assumption \ref{ass:iota} that
  bounded sets in $(X,\tilde d_t)$ are bounded in $(X,d)$. Then the
  distances $\tilde d_t$ and $d_t$ induce the same topology as the
  original distance $d$.
\end{proposition}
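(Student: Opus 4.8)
The plan is to prove the statement in three parts, following the structure of the assertions.

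\medskip

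\textbf{Continuity of $t\mapsto \tilde d_t(x,y)$.} First I would fix $x,y\in X$ and show that $t\mapsto \tilde d_t(x,y) = W(\iota_t(x),\iota_t(y))$ is continuous. The key tool is the triangle inequality for $W$: for $s,t\geq 0$,
\begin{align*}
  \big|\tilde d_t(x,y) - \tilde d_s(x,y)\big| \leq W\big(\iota_t(x),\iota_s(x)\big) + W\big(\iota_t(y),\iota_s(y)\big).
\end{align*}
Each term on the right tends to zero as $s\to t$ by the last bullet in Assumption \ref{ass:iota}, namely the continuity of the curves $t\mapsto\iota_t(x)$ and $t\mapsto\iota_t(y)$ in $(\cP_2(X),W)$. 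This immediately gives continuity.

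\medskip

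\textbf{Pointed Gromov--Hausdorff continuity of $t\mapsto (X,\tilde d_t)$.} Here I would produce an explicit pointed Gromov--Hausdorff approximation rather than invoke an abstract criterion. Fix a base point $x_0\in X$ and $t\geq 0$; I would use the identity map $\mathrm{Id}:X\to X$ as a correspondence between $(X,\tilde d_s)$ and $(X,\tilde d_t)$ and estimate its distortion on a ball $B=\{x : d(x,x_0)\leq R\}$. The distortion is
\begin{align*}
  \sup_{x,y\in B}\big|\tilde d_s(x,y) - \tilde d_t(x,y)\big| \leq 2\sup_{x\in B} W\big(\iota_s(x),\iota_t(x)\big),
\end{align*}
using the same triangle-inequality bound as above. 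The difficulty, and the main obstacle in this part, is that the pointwise continuity of $t\mapsto\iota_t(x)$ must be upgraded to something uniform on $B$, so that the supremum over $x\in B$ is small for $s$ near $t$. I would obtain this by combining the Lipschitz bound \eqref{eq:i-Lip} (with $t\mapsto C_t$ locally bounded) with the pointwise continuity: covering the compact ball $B$ (compact since $(X,d)$ is proper) by finitely many small $d$-balls, controlling the variation of $W(\iota_s(x),\iota_t(x))$ across each small ball via \eqref{eq:i-Lip}, and using pointwise continuity at the finitely many centers. This equicontinuity-type argument yields a uniform modulus on $B$, hence the distortion on $B$ tends to zero and $(X,\tilde d_s)\to(X,\tilde d_t)$ in the pointed Gromov--Hausdorff topology.

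\medskip

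\textbf{Equivalence of topologies.} Finally I would show that $\tilde d_t$, $d_t$ and $d$ induce the same topology. Since $d_t\geq\tilde d_t$ always (the length distance dominates the chord distance), and $\tilde d_t\leq C_t\,d$ by \eqref{eq:i-Lip} while $d_t\leq C_t\,d$ by \eqref{eq:d-t-Lip}, both $\tilde d_t$ and $d_t$ are continuous with respect to $d$; hence $d$-convergence implies $\tilde d_t$- and $d_t$-convergence. For the reverse direction I would use the additional hypothesis that $\tilde d_t$-bounded sets are $d$-bounded. Suppose $\tilde d_t(x_n,x)\to 0$; the sequence $(x_n)$ is then $\tilde d_t$-bounded, hence $d$-bounded, hence contained in a compact $d$-ball by properness. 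Any $d$-convergent subsequence $x_{n_k}\to x'$ satisfies $\tilde d_t(x_{n_k},x')\to 0$ by the forward continuity, forcing $\tilde d_t(x,x')=0$ and so $x'=x$ since $\tilde d_t$ is a genuine distance; as every subsequence has a further subsequence $d$-converging to $x$, the whole sequence $d$-converges to $x$. Thus $\tilde d_t$-convergence implies $d$-convergence, giving topological equivalence of $\tilde d_t$ and $d$. The same argument applies verbatim to $d_t$ once one notes that $d_t\geq\tilde d_t$ makes $d_t$-bounded sets $\tilde d_t$-bounded, hence $d$-bounded, so the hypothesis transfers; I would therefore conclude that all three distances induce the original topology.
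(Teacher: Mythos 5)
Your proposal is correct and follows essentially the same route as the paper: the pointwise continuity comes from the triangle inequality plus Assumption \ref{ass:iota}, your covering argument on compact balls is just an explicit unpacking of the paper's equicontinuity-plus-pointwise-convergence (via \eqref{eq:i-Lip} and local boundedness of $C_t$) yielding uniform convergence on compacts, and your subsequence argument for the equivalence of topologies, using the sandwich $\tilde d_t\leq d_t\leq C_t d$, properness, and the boundedness hypothesis, is the paper's proof verbatim. The only cosmetic difference is that you spell out the transfer of the boundedness hypothesis from $\tilde d_t$ to $d_t$, which the paper absorbs silently into the sandwich inequality.
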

\begin{proof}
  We first prove the convergence statement. Let $(t_n)_n$ converge
  to $t$. As an immediate consequence of Assumption \ref{ass:iota} we
  have that $\tilde d_{t_n}(x,y)\to \tilde d_t(x,y)$ for fixed $x,y\in
  X$. Moreover, by \eqref{eq:i-Lip}, for each compact set $K$ in
  $(X,d)$ the functions $\tilde d_{t_n}(\cdot,\cdot)$ are
  equicontinuous on $K\times K$. Thus, they converge uniformly to
  $\tilde d_t(\cdot,\cdot)$. This readily yields the convergence of
  $(X,\tilde d_{t_n})$ to $(X,\tilde d_t)$ in the pointed
  Gromov--Hausdorff sense.
  Now, we turn to the second statement. First, recall from
  \eqref{eq:d-t-Lip} that $\tilde d_t\leq d_t\leq C_t d$. Thus, it
  suffices to show that for any sequence $(x_n)_n$, and element $x$ of $X$ with $\tilde
  d_t(x,x_n)\to0$ as $n\to \infty$ we also have that
  $d(x_n,x)\to0$. By assumption, the sequence $x_n$ is bounded in
  $(X,d)$. Thus, up to taking a subsequence we can assume that
  $d(x_n,x')\to 0$ for some $x'\in X$. Hence, also $\tilde
  d_t(x_n,x')\to0$ and we infer that $x'=x$. This being independent of
  the subsequence chosen, we conclude that the full sequence $x_n$
  converges to $x$ in $(X,d)$.
\end{proof}

\begin{remark}\label{rem:dt-continuity}
  We proved the continuity of the map $t\mapsto \tilde d_t(x,y)$. The
  continuity of $t\mapsto d_t(x,y)$ fails for the Heisenberg group at $t=0$
   as we will see in Section \ref{sec:subriem}. This is in contrast to \cite[Thm.~5.18]{GM}
  where right-continuity of this map is shown. Note however, that the
  Heisenberg group does not satisfy the RCD condition and our
  construction is slightly different in this case, see Remark
  \ref{rem:def-dt}.
\end{remark}

\subsection{Riemannian manifolds and RCD spaces}
\label{sec:RiemRCD}

In \cite{GM} the preceding construction has been introduced and
studied in the case where $(X,d)$ is a Riemannian manifold or more
generally a metric measure spaces satisfying the \emph{Riemannian
  curvature-dimension condition} for some curvature parameter $K \in
\R$, denoted by RCD$(K,\infty)$. For short we call
  such spaces RCD spaces. In both cases the embedding $\iota_t$ is
constructed using the heat kernel. Let us briefly recall the main
results in \cite{GM}.

\medskip

Let $(X,g)$ be a smooth compact and connected Riemannian manifold with
metric tensor $g$ and let $d$ and $\vol$ be the associated Riemannian
distance and volume measure.  One can define a map
$\iota_t:X\to\cP_2(X)$ be setting $\iota_t(x)=\nu^t_x$, where
$\nu^t_x(\mathrm{d} y)=p_t(x,y)\vol(\mathrm{d} y)$ is the heat kernel
measure, i.e.~ $p_t(\cdot,\cdot)$ is the fundamental solution to the
heat equation on $X$. It can be verified that Assumption
\ref{ass:iota} and Proposition \ref{prop:convergence} hold in this
case.

Gigli and Mantegazza prove that the distances $d_t$ are induced by a
family of smooth metric tensors $(g_t)_{t\geq0}$ and that this flow of
tensors is initially tangent to the Ricci flow \cite[Prop.~ 3.5,Thm.~
4.6]{GM}. More precisely, for every geodesic
$(\gamma_s)_{s\in[0,1]}$ with respect to $g=g_0$:
\begin{align*}
  \ddt g_t(\dot\gamma_s,\dot\gamma_s)\big|_{t=0} ~=~ \Ric(\dot\gamma_s,\dot\gamma_s)\quad \text{for almost every } s\in(0,1)\;,
\end{align*}
where $\Ric$ denotes the Ricci tensor of $g$. Gigli and Mantegazza
then generalize the construction for the initial data being a metric
measure space satisfying the RCD$(K,\infty)$. Since we do not work in
this general setting, we will describe it only briefly. For more
details on RCD spaces we refer to \cite{AGS_invent, AGS_duke}.

Roughly speaking, RCD spaces form a natural class of metric measure
spaces that can be equipped with a canonical notion of Laplace operator
and a well behaved associated heat kernel. The RCD$(K,\infty)$ is a reinforcement of the
curvature-dimension condition CD$(K,\infty)$ introduced by
Lott--Villani and Sturm \cite{LV09,S06} as a synthetic definition of a
lower bound $K$ on the Ricci curvature for a metric measure space $(X,d,m)$.
The condition CD$(K,\infty)$ asks for the relative entropy
\begin{align*}
  \Ent(\mu) = \int\rho\log\rho\dd m\;,\quad\text{for }\mu=\rho m\in\cP_2(X) 
\end{align*}
to be $K$-convex along Wasserstein geodesics, i.e.~
\begin{align*}
  \Ent(\mu_s)\leq (1-s)\Ent(\mu_0) + s \Ent(\mu_1) -\frac{K}{2}s(1-s)W(\mu_0,\mu_1)^2\;.
\end{align*}
The RCD$(K,\infty)$ condition requires in addition that the `heat
flow' obtained as the Wasserstein gradient flow of the entropy in the
spirit of Otto \cite{O01} is linear. This excludes e.g.~ Finslerian
geometries. It is a deep insight that the two requirements can be
encoded simultaneously in the following property (which we take as a
definition of RCD spaces for the purpose of this paper).

\begin{theorem}[Definition of the RCD spaces through the EVI
  {\cite[Thm. 5.1]{AGS_duke}}]
  Let $K$ be a real number. The metric measure space $(X,d,m)$
  satisfies the Riemannian curvature-dimension condition
  RCD$(K,\infty)$ if and only if for every $\mu\in\cP_2(X)$ there
  exist an absolutely continuous curve $(\mu_t)_{t\geq0}$ in
  $(\cP_2(X),W)$ starting from $\mu$ in the sense that
  $W_2(\mu,\mu_t)\to0$ as $t\to0$ and solving the
  \emph{Evolution Variational Inequality} (in short EVI)
    of parameter $K$, i.e.~for all $\nu\in\cP_2(X)$
  such that $\Ent(\chi|m)<\infty$ and a.e. $t>0$:
  \begin{align*}
    \ddt \frac12 W(\mu_t,\chi)^2 + \frac{K}{2}W(\mu_t,\chi)^2 \leq \Ent(\chi) -\Ent(\mu_t)\;.
  \end{align*}
\end{theorem}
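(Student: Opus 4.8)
The plan is to prove the two implications by relating the EVI$_K$ gradient flow of the entropy to the two structural features that together define an RCD$(K,\infty)$ space: geodesic $K$-convexity of $\Ent$ (the CD$(K,\infty)$ condition) and linearity of the heat flow (infinitesimal Hilbertianity). The bridge used in both directions is the identification, established in earlier work of Ambrosio--Gigli--Savar\'e, of the $L^2$-gradient flow of the Cheeger energy with the $W$-gradient flow of $\Ent$ in $(\cP_2(X),W)$; this is what allows one to pass freely between the analytic heat semigroup and its variational description on the Wasserstein space.

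For the implication EVI $\Rightarrow$ RCD I would first invoke the abstract metric theory of evolution variational inequalities: whenever a functional admits an EVI$_K$ gradient flow issuing from every initial datum, the functional is necessarily geodesically $K$-convex. Applied to $\Ent$ this yields CD$(K,\infty)$. Next I would exploit the regularity built into the EVI: two solutions satisfy the contraction $W(\mu_t,\chi_t)\le e^{-Kt}W(\mu_0,\chi_0)$, and EVI$_K$ solutions are unique, so the flow defines a semigroup which can be identified with the heat flow. The crucial point is that the EVI, through this contraction and uniqueness, forces the semigroup to depend linearly on its initial measure; this linearity is exactly the quadraticity of the Cheeger energy, i.e.\ infinitesimal Hilbertianity. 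Together with the convexity this gives RCD$(K,\infty)$, and rules out the Finslerian examples in which CD$(K,\infty)$ holds but the heat flow is nonlinear.

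For the converse RCD $\Rightarrow$ EVI I would start from infinitesimal Hilbertianity, which provides a linear self-adjoint heat semigroup $(P_t)$ together with a Laplacian, and use the identification of flows to regard $(P_t)$ as the $W$-gradient flow of $\Ent$. The CD$(K,\infty)$ convexity combined with the linear structure yields, after a self-improvement argument, the Bakry--Émery gradient estimate of parameter $K$ for $(P_t)$. The final step is to upgrade the energy-dissipation characterisation of the gradient flow to the full EVI$_K$: this is carried out through the action-estimate machinery, pairing the continuity equation satisfied by $(\mu_t)$ against subsolutions of the Hamilton--Jacobi equation and using the gradient estimate to control the resulting cross terms, so that the differential inequality of the statement holds for a.e.\ $t$.

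I expect this last upgrade to be the main obstacle. In the smooth setting the EVI for the heat flow is classical and follows directly from Bochner's formula, but in the nonsmooth RCD framework one lacks a pointwise second-order calculus and must substitute it by the subtle interplay between the two gradient-flow interpretations of the heat flow and the self-improving gradient estimates. A secondary difficulty is that the abstract EVI theory delivers $K$-convexity only along \emph{some} geodesic between prescribed endpoints, whereas the CD$(K,\infty)$ condition asks for it along (a.e.) Wasserstein geodesic; closing this gap requires exploiting the good density and approximation properties of $(\cP_2(X),W)$.
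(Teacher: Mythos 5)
A point of order first: the paper does not prove this statement at all. It is quoted verbatim from \cite[Thm.~5.1]{AGS_duke} and explicitly adopted as the \emph{definition} of RCD spaces ``for the purpose of this paper,'' so there is no internal proof to compare your proposal against; the only meaningful benchmark is the proof of Ambrosio--Gigli--Savar\'e themselves. Measured against that, your architecture is broadly right: EVI$_K$ $\Rightarrow$ geodesic $K$-convexity of $\Ent$ is indeed the abstract Daneri--Savar\'e mechanism, and the converse does run through the identification of the two gradient-flow descriptions of the heat flow, gradient estimates, and action/Hopf--Lax estimates, much as you sketch. Note, though, that your closing worry is inverted: the Daneri--Savar\'e argument yields $K$-convexity along \emph{every} geodesic (i.e.\ strong CD$(K,\infty)$), which is \emph{more} than CD$(K,\infty)$ asks; there is no gap of the kind you fear in that direction, and the subtle point in \cite{AGS_duke} lies elsewhere.

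The genuine gap is your linearity step. The claim that ``contraction and uniqueness force the semigroup to depend linearly on its initial measure'' is false as stated: a nonexpansive semigroup with unique trajectories need not be affine, and in the Finsler examples the theorem is designed to exclude (cf.\ the Ohta--Sturm results quoted in Section \ref{sec:finsler} of this paper) the gradient flow of the entropy is also unique --- what fails there is contraction and linearity, so uniqueness cannot be the engine producing linearity. The argument in \cite{AGS_duke} instead establishes \emph{additivity} of the flow: given EVI flows $(\mu_t)$ and $(\nu_t)$, one shows that $\sigma_t=(1-\lambda)\mu_t+\lambda\nu_t$ is again a gradient flow of $\Ent$ in the energy-dissipation sense, using the joint convexity of the kinetic action $(\rho,v)\mapsto\int |v|^2\rho$ (so metric speeds mix subadditively) and of the Fisher information (so slopes mix subadditively), and then invokes uniqueness of energy-dissipation gradient flows of the entropy to conclude that $\sigma_t$ is the flow started at $\sigma_0$; quadraticity of the Cheeger energy is then obtained from a separately proved equivalence between linearity of the semigroup and quadraticity of the energy. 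Without this convexity input your first implication does not close, and since it is precisely the step that rules out the Finslerian geometries, it cannot be elided or replaced by soft semigroup properties.
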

In fact, the solution $\mu_t$ to the EVI is unique and, putting
$H_t\mu=\mu_t$, one obtains a linear semigroup on $\cP_2(X)$ which is
called the heat flow (acting on measures) in $X$. The construction in
\cite{GM} then proceeds as presented in Section
\ref{sec:construction} by choosing the map $\iota_t:X\to\cP_2(X)$ to
be $\iota_t(x)=H_t\delta_x$. A natural example of RCD spaces are
Euclidean cones, see \cite{Ket15}.

\subsection{Normed spaces}
\label{sec:finsler}

For an example that can be studied rapidly and is rather different let us consider the flow for $\R^n$ equipped with a norm $\norm{\cdot}$. Indeed, the metric measure space $(\R^n,\norm{\cdot},\Leb)$ satisfies the condition
CD$(0,\infty)$ but does not satisfy RCD$(0,\infty)$ unless
$\norm{\cdot}$ is induced by an inner product. It is possible to
consider in this setting a non-linear heat equation, driven by a
non-linear Laplace operator, see \cite{OhSt_cpam} for the a study in
the much more general setting of Finsler manifolds. However, for a
non-Hilbert norm there is no canonical choice of a heat kernel, i.e.~a
solution starting from a Dirac mass since contraction of the heat flow
fails \cite{OhSt_arma}. Note however, that a particular solution is
given by the appealing formula \cite[Example 4.3]{OhSt_arma}
\begin{align*}
   f_t(x)=\frac{C}{4\pi t}\exp\left(-\frac{\norm{x}^2}{4t}\right)\;,
\end{align*}
where $C$ is a normalization constant. Hence a choice satisfying Assumption \ref{ass:iota} is $\iota_t(x)=f_t(\cdot- x)\Leb$. Any other reasonable choice should be translation invariant. Let us show that in this case the
distance $d_t$ coincides with the original one, i.e.~
$d_t(x,y)=\norm{x-y}$. Indeed, consider $\iota_t:x\mapsto
(\tau_x)_\#\nu_t$ where $\nu_t\in\cP_2(\R^d,\norm{\cdot})$ is a
measure and $\tau_x$ the translation by $x$. It is easily checked
using Jensen's inequality on the convex function $(u,v)\mapsto
\norm{u-v}^2$ that
$W_{(\R^n,\norm{\cdot})}\big(\iota_t(x),\iota_t(y)\big)=\norm{x-y}$. The
translation $\tau_{y-x}$ is an optimal map, in other words
$(\tau_x,\tau_y)_\#\nu_t$ is an optimal coupling. Since the original
distance was already a length distance we find $d_t(x,y)=\tilde
d_t(x,y)=\norm{x-y}$. Hence the flow leaves the space invariant and
does not regularize it to a Riemannian manifold.

\begin{remark} We stress that the approximation of some normed spaces
  by Riemannian manifolds is possible by using periodic Riemannian
  metrics with a period diameter going to zero. Consider for instance
  the sequence $(\R^n,k^{-1}d_g)_{k\geq 1}$ where $d_g$ is a fixed periodic Riemannian
  distance. It converges to $\R^n$ equipped with its ``stable norm''
  as defined for instance in \cite[section 8.5.2]{BBI}.  It is not
  clear whether any norm may be attained in this way and this question
  is related to the notorious open problem of characterizing the
  stable norms \cite{BIK}. Finally, note that it is impossible to
  approximate a non-Hilbertian normed space in Gromov Hausdorff
  topology by Riemannian manifolds with non-negative Ricci
  curvature. This is because any such limit metric measure space that
  contains a line has to split as a product of $\R$ and another metric
  measure space by the splitting theorem for Ricci limit spaces
  established by Cheeger and Colding \cite{CheCol}, see also
  \cite[Conclusions and open problems]{Vil09}. This argument also
  applies to the Heisenberg group. Moreover it is proven in
  \cite{Jui09} that $(\H,d_{cc})$ also cannot be approximated by a sequence of
  Riemannian manifolds with \emph{any} uniform lower bound on the
  Ricci curvature.
\end{remark}

\section{Gigli--Mantegazza flow starting from a cone}
\label{sec:cone}

In this section we will analyse the construction in the case where the initial datum is an
Euclidean cone. More precisely, we will consider the cones of angle
$\pi$ and $\pi/2$. We will show that for all times $t$ the resulting
metric $d_t$ retains a warped product form in both cases. In the first
case, it has a conic singularity of angle $\sqrt{2}\pi$ at the apex
for all $t$. In the second case, the asymptotic angle at the apex is
zero for all $t$. Thus in these natural examples, the flow does not
smoothen out the singularity.

In Sections \ref{sec:prelim-cone} to \ref{sec:persistence} we will
present the case of the cone of angle $\pi$ in detail. For the cone of
angle $\pi/2$ we will state the main results in Section
\ref{sec:small-cone} and omit part of the proofs, since the arguments
are very similar.

\subsection{Preliminaries}
\label{sec:prelim-cone}

We will first recall basic properties of Euclidean cones and give an
explicit representation of the heat kernel on the cone of angle $\pi$
in the sense of RCD spaces. Moreover, we will exhibit a
convenient way to calculate Wasserstein distances in the cone, via a
lifting procedure from the cone to $\R^2$.

\subsubsection{Euclidean cones and optimal transport}
The Euclidean cone $C(\theta)$ with angle $\theta\in[0,2\pi]$ is
defined as the quotient 
\begin{align*}
  C(\theta)~=~\Big([0,\infty)\times[0,\theta]\Big)\Big/\sim\;,
\end{align*}
where we write $(r,\alpha)\sim(s,\beta)$ if and only if $r=s=0$ or
$|\alpha-\beta|\in\{0,\theta\}$. The cone distance $d$ is given by
\begin{align*}
  d\big(r,\alpha),(s,\beta)\big) ~=~ \sqrt{r^2+s^2-2rs\cos\big(\min\big(|\alpha-\beta|,\theta-|\alpha-\beta|\big)\big)}\;,
\end{align*}
which is well defined on the quotient. Note that the
  cone without the apex, i.e $C(\theta)\setminus\{o\}$, where $o$ is
  the equivalence class of $(0,0)$, is an open Riemannian manifold
with the metric tensor $(\mathrm{d} r)^2+r^2(\mathrm{d}\alpha)^2$. Its
geometry is locally Euclidean. The associated Riemannian distance is
the cone distance and the distance on the full cone $C(\theta)$ is its
metric completion.

We will be concerned in particular with the cone of angle $\pi$. In
this case we have the alternative characterization as the quotient
\begin{align*}
  C(\pi)=\R^2\big/\sigma\;,
\end{align*}
where the map $\sigma:\R^2\to\R^2$ is the reflection at the origin,
i.e. $\sigma(x)=-x$. Let us denote by $P:\R^2\to C(\pi)$ the canonical
projection. Then the cone distance between $p,q\in C(\pi)$ can be
written as
\begin{align*}
  d(p,q) = \min\big(|x-y|,|x+y|\big)\;,
\end{align*}
where $x,y\in \R^2$ are such that $P(x)=p,P(y)=q$. The Hausdorff
measure on $C(\pi)$ is given as $m=\frac12 P_\#\Leb$, where $\Leb$
denotes the Lebesgue measure on $\R^2$.

Now, we show how to calculate efficiently Wasserstein distance in the
cone $C(\pi)$. We will denote by $W_{\R^2}$ and $W_{C(\pi)}$ the
$L^2$ transport distances on $\R^2$ and $C(\pi)$ built from the
Euclidean distance and the cone distance $d$ respectively. If no
confusion can arise we shall simply write $W$.

Let us introduce the set of measures on $\R^2$ with finite second
moment, that are symmetric with respect to the origin. We set
\begin{align*}
  \cP_2^{\mathrm{sym}}(\R^2) = \{\mu\in\cP_2(\R^2)~:~\sigma_{\#}\mu=\mu \}\;.
\end{align*}
Note that given a measure $\nu\in\cP_2(C(\pi))$ there exists a unique
measure $L(\nu)\in\cP_2^{\mathrm{sym}}(\R^2)$ such that
$P_\# L(\nu)=\nu$. We call $L(\nu)$ the \emph{symmetric lift} of
$\nu$.

We have the following useful fact.

\begin{lemma}\label{lem:lifting}
 For any two measures $\mu,\nu\in\cP_2(C(\pi))$ it holds
  \begin{align*}
    W_{C(\pi)}(\mu,\nu) = W_{\R^2}(L(\mu),L(\nu))\;.
  \end{align*}
  In other words, the mapping $\cP_2^{\mathrm{sym}}(\R^2)\to \cP_2(C(\pi))$,
  $\mu\mapsto P_{\#}\mu$ is an isometry. Moreover, for any two
  measures $\mu,\nu\in\cP_2(\R^2)$ we have
  \begin{align*}
    W_{C(\pi)}(P_\#\mu,P_\#\nu) \leq W_{\R^2}(\mu,\nu)\;.
  \end{align*}
\end{lemma}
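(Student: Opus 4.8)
The plan is to treat the two displayed assertions separately, noting that the contraction inequality is the easy ingredient and simultaneously delivers one half of the isometry claim. First I would prove the contraction inequality $W_{C(\pi)}(P_\#\mu,P_\#\nu)\le W_{\R^2}(\mu,\nu)$ for arbitrary $\mu,\nu\in\cP_2(\R^2)$. The only input is that $P$ is $1$-Lipschitz: the explicit formula for $d$ gives $d(P(x),P(y))=\min(|x-y|,|x+y|)\le|x-y|$. Hence, given any coupling $\gamma$ of $\mu$ and $\nu$ on $\R^2$, the pushforward $(P\times P)_\#\gamma$ couples $P_\#\mu$ and $P_\#\nu$ and has cone-cost $\int d(P(x),P(y))^2\dd\gamma\le\int|x-y|^2\dd\gamma$; taking the infimum over $\gamma$ concludes. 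Applying this with $\mu,\nu$ replaced by $L(\mu),L(\nu)$ and using $P_\#L(\mu)=\mu$, $P_\#L(\nu)=\nu$ immediately yields $W_{C(\pi)}(\mu,\nu)\le W_{\R^2}(L(\mu),L(\nu))$, which is one half of the isometry.

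The heart of the proof is the reverse inequality, for which I would lift an arbitrary coupling on the cone to a \emph{symmetric} coupling on $\R^2$ of equal cost. Fix a coupling $\pi$ of $\mu,\nu$ on $C(\pi)^2$ and a measurable section $s\colon C(\pi)\to\R^2$ of $P$ (selecting one lift per point via a fundamental domain for $\sigma$). Define a measurable map $S\colon C(\pi)^2\to(\R^2)^2$ by $S(p,q)=(s(p),y^\ast)$, where $y^\ast\in\{s(q),-s(q)\}$ is the lift of $q$ closest to $s(p)$, with some measurable tie-breaking. By the formula for $d$ this achieves $|s(p)-y^\ast|=\min(|s(p)-s(q)|,|s(p)+s(q)|)=d(p,q)$. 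I then symmetrize, setting
\[
  \Pi ~=~ \tfrac12\,S_\#\pi+\tfrac12\,(\sigma\times\sigma)_\#S_\#\pi\;,
\]
and check three points: (i) $(P\times P)_\#\Pi=\pi$, since $P\circ s=\mathrm{id}$ and $P\circ\sigma=P$; (ii) each marginal of $\Pi$ is $\sigma$-invariant and projects under $P$ to $\mu$, resp.\ $\nu$, so by uniqueness of the symmetric lift it equals $L(\mu)$, resp.\ $L(\nu)$; (iii) the Euclidean cost of $\Pi$ equals $\int d(p,q)^2\dd\pi$, the two summands coinciding because $|\sigma x-\sigma y|=|x-y|$. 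Thus $W_{\R^2}(L(\mu),L(\nu))^2\le\int d(p,q)^2\dd\pi$, and minimizing over $\pi$ gives the reverse inequality $W_{\R^2}(L(\mu),L(\nu))\le W_{C(\pi)}(\mu,\nu)$, completing the isometry.

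The main obstacle I anticipate is purely measure-theoretic: constructing the section $s$ and the selection $y^\ast$ measurably. Concretely I would fix a Borel fundamental domain for the $\Z_2$-action of $\sigma$ on $\R^2$ (for instance a half-plane together with a chosen half of its boundary, the apex sent to the origin) to obtain $s$, and observe that the closest-lift map, equipped with a fixed tie-breaking rule on the negligible coincidence set, is Borel. Once $S$ is in hand the remaining verifications are routine bookkeeping with pushforwards, the two structural facts being the uniqueness of the symmetric lift (for the marginals in (ii)) and the isometry property $|\sigma x-\sigma y|=|x-y|$ (for the cost in (iii)).
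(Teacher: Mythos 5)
Your proposal is correct and follows essentially the same route as the paper: the contraction half via the pushforward $(P\times P)_\#$ of couplings using that $P$ is $1$-Lipschitz, and the reverse inequality via a measurable selection of closest lifts (your map $S$ is exactly the paper's map $Q$ with $(P\otimes P)\circ Q=\mathrm{Id}$ and $|x-y|=d(p,q)$) followed by the identical symmetrization $\tfrac12\big(S_\#\pi+(\sigma\times\sigma)_\#S_\#\pi\big)$, identifying the marginals through uniqueness of the symmetric lift. Your only addition is spelling out the measurability of the section and tie-breaking, which the paper leaves implicit; note that the tie set need not be negligible, but this is harmless since either choice attains the minimum.
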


\begin{proof}
  Let us first prove the second statement. Let $\mu,\nu\in\cP(\R^2)$
  and $\pi$ a transport plan between $\mu$ and $\nu$. Define a
  transport plan $\bar\pi$ between $P_\#\mu$ and $P_\#\nu$ by setting
  $\bar{\pi}=(P\otimes P)_\#\pi$. Therefore,
  \begin{align}\label{eq:simple}
    \int |y-x|^2\dd \pi(x,y)\geq \int d(P(y),P(x))^2\dd
    \pi(x,y)
   =\int d^2\dd \bar\pi\;.
  \end{align}
  Taking the infimum over $\pi$, we get the second statement. We turn now to the first statement. Let $\mu,\nu\in\cP_2^{\mathrm{sym}}(\R^2)$ and let $\bar{\pi}$ be a
  transport plan between $P_\#\mu$ and $P_\#\nu$. We can find a
  measurable map $Q:C(\pi)\times C(\pi)\to\R^2\times \R^2$ such that
  $(P\otimes P)\circ Q=\mathrm{Id}$ and $|x-y|=d(p,q)$ for
  $Q(p,q)=(x,y)$. These properties also hold for $-Q$ that we note $Q^-$. The
  marginals of the transport plan
  $\pi=\frac12(Q_\#\bar\pi+Q^-_\#\bar\pi)$ are symmetric, hence they
  coincide with $\mu$ and $\nu$. Moreover $\pi$ is concentrated on the
  set $\{(x,y)\in\R^2\times\R^2,\,d(P(x),P(y))=|y-x|\}$ so that we
  have equality in \eqref{eq:simple}. Taking the infimum over
  $\bar\pi$ and taking into account the second statement, we obtain
  the first statement.
\end{proof}

\subsubsection{RCD structure and the heat kernel}
Here we verify that the cone $C(\pi)$ fits into the framework of RCD
spaces considered in \cite{GM} and we give an explicit
description of the heat kernel in this case.

Indeed, the metric measure space $(C(\pi),d,m)$ satisfies the condition
RCD$(0,\infty)$ as proven for instance in \cite[Thm. 1.1]{Ket15}. In
order to identify the heat semigroup $H_t$ acting on measures and the
heat kernel $H_t\delta_x$ in this example, it is sufficient to exhibit
an explicit solution to the Evolution Variational Inequality using
\cite[Thm. 5.1]{AGS_duke}, see Section \ref{sec:RiemRCD}. This will be
done again via the lifting to $\R^2$.

We denote by $\gamma^t_x$ the Gaussian measure with variance $2t$
centered at $x\in\R^2$:
\begin{align*}
  \gamma^t_x(\mathrm{d} y)=\frac{1}{4\pi t}\exp\left(-\frac{|y-x|^2}{4t}\right)\dd y\;.
\end{align*}
The heat semigroup in $\R^2$ acting on measures is denoted by
$H^{\R^2}_t$. More precisely, for any $\mu\in\cP_2(\R^2)$ we set
$H^{\R^2}_t\mu(\mathrm{d} x)=\int \gamma^t_y(\mathrm{d} x) \dd\mu(y)$.

Now, put $\nu^t_p=P_\#(\gamma^t_x)$ where $x$ is such that
$P(x)=p$. We define a semigroup $H^{C(\pi)}_t$ acting on $\cP(C(\pi))$ via
\begin{align*}
H^{C(\pi)}_t\mu(\mathrm{d} q) = \int \nu^t_p(\mathrm{d} q) \dd\mu(p)\;.
\end{align*}
Note that we have $H^{C(\pi)}_t= P_\#\circ H^{\R^2}_t\circ L$.

\begin{lemma}[Evolution Variational Inequality]\label{lem:evi}
  For every $\mu,\chi\in \cP_2(C(\pi))$ such that
  $\Ent(\chi)<\infty$ and every $t\geq 0$ we have
  \begin{align*}
    \frac12 W_{C(\pi)}^2(H^{C(\pi)}_t\mu,\chi)-\frac12 W_{C(\pi)}^2(\mu,\chi) \leq t\big[\Ent(\chi)-\Ent(H^{C(\pi)}_t\mu)\big]\;.
  \end{align*}
\end{lemma}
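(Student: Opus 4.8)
The plan is to prove the Evolution Variational Inequality on the cone by lifting everything to $\R^2$, where the corresponding EVI for the Gaussian heat semigroup is classical, and then transporting the inequality back down to $C(\pi)$ using the isometry from Lemma \ref{lem:lifting}. The key structural observation is the identity $H^{C(\pi)}_t = P_\# \circ H^{\R^2}_t \circ L$ noted just before the statement, which says that the heat flow on the cone is conjugate to the Euclidean heat flow via the lifting operator $L$ and projection $P$.

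\emph{First} I would recall that $(\R^2,|\cdot|,\Leb)$ satisfies $\RCDim(0,\infty)$, so by the EVI characterization \cite[Thm.~5.1]{AGS_duke} the Gaussian heat semigroup $H^{\R^2}_t$ solves the EVI of parameter $K=0$: for all $\tilde\mu, \tilde\chi \in \cP_2(\R^2)$ with $\Ent(\tilde\chi)<\infty$,
\begin{align*}
  \tfrac12 W_{\R^2}^2(H^{\R^2}_t\tilde\mu,\tilde\chi) - \tfrac12 W_{\R^2}^2(\tilde\mu,\tilde\chi) \leq t\big[\Ent(\tilde\chi)-\Ent(H^{\R^2}_t\tilde\mu)\big]\;.
\end{align*}
\emph{Then} given $\mu,\chi \in \cP_2(C(\pi))$ with $\Ent(\chi)<\infty$, I would apply this to the symmetric lifts $\tilde\mu = L(\mu)$ and $\tilde\chi = L(\chi)$. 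The crucial compatibility facts to verify are: (i) the lift $L(\mu)$ is symmetric, and the Gaussian heat flow preserves symmetry, i.e.\ $\sigma_\# H^{\R^2}_t L(\mu) = H^{\R^2}_t L(\mu)$, which follows because $\gamma^t_x$ is radially symmetric so $\sigma_\# \gamma^t_x = \gamma^t_{\sigma(x)}$ and $L(\mu)$ is $\sigma$-invariant; hence $H^{\R^2}_t L(\mu) = L(H^{C(\pi)}_t \mu)$. (ii) By Lemma \ref{lem:lifting}, $W_{\R^2}(L(\mu'),L(\chi')) = W_{C(\pi)}(\mu',\chi')$ for any symmetric pair, so each Wasserstein term on $\R^2$ equals the corresponding term on $C(\pi)$. (iii) The entropy terms match up to an additive constant: since $m = \tfrac12 P_\#\Leb$ and $P$ identifies symmetric measures two-to-one, the densities satisfy a relation under which $\Ent_{\R^2}(L(\nu)\,|\,\Leb)$ and $\Ent_{C(\pi)}(\nu\,|\,m)$ differ by a fixed constant ($\log 2$) that cancels in the difference $\Ent(\chi) - \Ent(H^{C(\pi)}_t\mu)$.

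\emph{The main obstacle} I expect is the careful bookkeeping in point (iii): one must check precisely how the relative entropy transforms under the two-to-one projection $P$ and the normalization $m = \tfrac12 P_\#\Leb$. Writing $L(\nu) = \rho\,\Leb$ with $\rho$ symmetric and $\nu = P_\#L(\nu)$, the density of $\nu$ with respect to $m$ is $2\rho\circ(\text{section})$, and one needs $\Ent_{C(\pi)}(\nu\,|\,m) = \Ent_{\R^2}(L(\nu)\,|\,\Leb) + \log 2$; since the offset is the same constant for both $\chi$ and $H^{C(\pi)}_t\mu$, it disappears in the right-hand side, giving exactly the stated inequality with the same constant $t$. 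A secondary point to confirm is that $H^{C(\pi)}_t\mu \in \cP_2(C(\pi))$ with the expected integrability, but this is routine given the Gaussian tails. Once (i)--(iii) are established, the cone EVI follows immediately by substituting the three identities into the Euclidean EVI, and by \cite[Thm.~5.1]{AGS_duke} this both confirms $(C(\pi),d,m)$ satisfies $\RCDim(0,\infty)$ and identifies $H^{C(\pi)}_t$ as its canonical heat flow.
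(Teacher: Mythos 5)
Your proposal is correct and follows essentially the same route as the paper's proof: lift to $\R^2$ via $L$, observe that $H^{\R^2}_tL(\mu)$ is the symmetric lift of $H^{C(\pi)}_t\mu$, apply the classical Euclidean EVI, and transfer back using Lemma \ref{lem:lifting}. Your point (iii) is in fact slightly more careful than the paper, which asserts $\Ent(L(\mu))=\Ent(\mu)$ outright; with the normalization $m=\tfrac12 P_\#\Leb$ the two entropies differ by the constant $\log 2$, which, as you correctly note, cancels in the entropy difference on the right-hand side.
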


\begin{proof}
  Let $L(\mu),L(\chi)\in \cP_2^\sigma(\R^2)$ be the lifts of
  $\mu,\chi$. Note that $H^{\R^2}_tL(\mu)$ is the symmetric lift of
  $H_t^{C(\pi)}\mu$. Since $H^{\R^2}_t$ satisfies the Evolution Variational
  Inequality, see e.g.~\cite[Thm. 11.2.5]{AGS08}, we find
  \begin{align*}
    \frac12 W^2_{\R^2}(H^{\R^2}_tL(\mu),L(\chi))-\frac12 W^2_{\R^2}(L(\mu),L(\chi))\\
   \leq t\big[\Ent(L(\chi))-\Ent(H^{\R^2}_tL(\mu))\big]\;.
  \end{align*}
  Observing that $\Ent(L(\mu))=\Ent(\mu)$ for any
  $\mu\in\cP_2(C(\pi))$ and its symmetric lift $L(\mu)$ and using
  Lemma \ref{lem:lifting}, this immediately yields the claim.
\end{proof}

In view of \cite[Thm. 5.1]{AGS_duke}, this shows again that
$(C(\pi),d,m)$ satisfies RCD$(0,\infty)$ and that $H^{C(\pi)}_t$ is
the associated heat semigroup. In particular,
$\nu^t_p=H^{C(\pi)}_t\delta_p$ is the heat kernel at time $t$ centered
at $p$.

We finish this section by noting the following contraction property of
the heat flow:
\begin{align}\label{eq:contractivity}
  W_{C(\pi)}(\nu^t_p,\nu^t_q) \leq d(p,q), \qquad\forall p,q\in C(\pi), t\geq0\;.
\end{align}
Indeed, choosing $x,y$ with $P(x)=p,P(y)=q$ and $d(p,q)=|x-y|$, by
Lemma \ref{lem:lifting} and convexity of the squared Wasserstein
distance we have
\begin{align*}
  W_{C(\pi)}(\nu^t_p,\nu^t_q) &= W_{\R^2}\left(\frac12(\gamma^t_x+\gamma^t_{-x}),\frac12(\gamma^t_y+\gamma^t_{-y})\right)\\
                      &\leq W_{\R^2}(\gamma^t_x,\gamma^t_{y}) = |x-y| = d(p,q)\;.
\end{align*}

\subsubsection{Absolutely continuous curves and the continuity equation}
\label{sec:ac-cruves-cont}

We recall the characterization of absolutely continuous curves in
the Wasserstein space of the Euclidean spaces via solutions to the continuity
equation. Moreover, we formulate a convenient estimate on the driving
vector field in the continuity equation.

\begin{proposition}[{\cite[Thm. 8.3.1]{AGS08}}]
  A weakly continuous curve $(\mu_s)_{s\in [0,T]}$ in $\cP_2(\R^n)$ is
  $2$-absolutely continuous with respect to $W$ if and only if there
  exists a Borel family of vector fields $V_s$ with
  $\int_0^T\norm{V_s}_{L^2(\mu_s;\R^n)}^2\dd s<\infty$ such that the
  continuity equation
  \begin{align*}
    \partial_s\mu +\diverg(\mu_sV_s)=0
  \end{align*}
  holds in distribution sense. In this case we have
  $|\dot \mu_s| \leq \norm{V_s}_{L^2(\mu_s;\R^n)}$ for a.e.~$s$. Moreover,
  $V_s$ is uniquely determined for a.e.~$s$ if we require
  \begin{align*}
    V_s\in T_{\mu_s}\cP_2(\R^n) := \overline{ \{ \nabla \psi \ | \ \psi
    \in C^\infty_c(\R^n) \}}^{L^{2}(\mu_s; \R^{n})}
  \end{align*}
  and it holds $|\dot \mu_s|=\norm{V_s}_{L^2(\mu_s;\R^n)}$.
\end{proposition}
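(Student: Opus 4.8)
The plan is to prove the two implications separately and then deal with uniqueness and the tangent-space refinement. For the direction asserting that a solution of the continuity equation is $2$-absolutely continuous with $|\dot\mu_s|\leq\norm{V_s}_{L^2(\mu_s;\R^n)}$, I would first treat the case of a velocity $(s,x)\mapsto V_s(x)$ that is smooth, bounded and Lipschitz in space. There the characteristic flow $X_s$ solving $\partial_s X_s=V_s(X_s)$ with $X_{s_0}=\Id$ is well defined and, by the method of characteristics, satisfies $\mu_s=(X_s)_\#\mu_{s_0}$. Since $X_{s_1}\circ X_{s_0}^{-1}$ is then an admissible transport map from $\mu_{s_0}$ to $\mu_{s_1}$, Minkowski's integral inequality applied to $X_{s_1}(x)-X_{s_0}(x)=\int_{s_0}^{s_1}V_r(X_r(x))\dd r$ gives
\begin{align*}
  W(\mu_{s_0},\mu_{s_1})\leq\Big(\int|X_{s_1}-X_{s_0}|^2\dd\mu_{s_0}\Big)^{1/2}\leq\int_{s_0}^{s_1}\norm{V_r}_{L^2(\mu_r;\R^n)}\dd r\;.
\end{align*}
The general case then follows by mollifying in space, replacing $(\mu_s,V_s)$ by $\mu^\eps_s=\mu_s\ast\rho_\eps$ and $V^\eps_s=\big((\mu_sV_s)\ast\rho_\eps\big)\big/(\mu_s\ast\rho_\eps)$, checking that the regularized pair still solves the continuity equation with $\norm{V^\eps_s}_{L^2(\mu^\eps_s)}\leq\norm{V_s}_{L^2(\mu_s)}$ by Jensen, and passing to the limit $\eps\to0$.

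For the converse I would build the velocity from the curve by time-discretization. Given an $\AC^2$ curve and a partition $0=t_0<\dots<t_N=T$, I would interpolate between consecutive marginals $\mu_{t_i},\mu_{t_{i+1}}$ by the constant-speed McCann geodesic, which on each subinterval solves a continuity equation with a velocity whose $L^2(\mu_s)$-norm equals $W(\mu_{t_i},\mu_{t_{i+1}})/(t_{i+1}-t_i)$. By Cauchy--Schwarz and $W(\mu_{t_i},\mu_{t_{i+1}})\leq\int_{t_i}^{t_{i+1}}|\dot\mu_s|\dd s$, the total energy of this interpolation is bounded by $\int_0^T|\dot\mu_s|^2\dd s$, uniformly in the partition. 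Encoding the velocities as momentum fields $V_s\mu_s$ on $[0,T]\times\R^n$, this uniform energy bound together with the second-moment control yields weak-$\ast$ compactness; a limit point solves the continuity equation for the original curve, and lower semicontinuity of the kinetic energy along this convergence preserves the bound $\int_0^T\norm{V_s}_{L^2(\mu_s)}^2\dd s\leq\int_0^T|\dot\mu_s|^2\dd s$.

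Combining the two implications forces $|\dot\mu_s|=\norm{V_s}_{L^2(\mu_s;\R^n)}$ for a.e.\ $s$ at the optimal velocity. For the tangent-space refinement I would note that the orthogonal complement of $T_{\mu_s}\cP_2(\R^n)$ inside $L^2(\mu_s;\R^n)$ is exactly the set of fields $W$ with $\int\nabla\psi\cdot W\dd\mu_s=0$ for all $\psi\in C_c^\infty(\R^n)$, i.e.\ $\diverg(\mu_sW)=0$ in the distributional sense. Hence replacing any admissible $V_s$ by its pointwise orthogonal projection $\Pi_{\mu_s}V_s$ leaves the continuity equation unchanged while not increasing the norm; this singles out the element of minimal norm, which therefore lies in the tangent space and realizes $|\dot\mu_s|$. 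Uniqueness is then immediate, since two tangent solutions differ by a tangent, divergence-free field, which must vanish.

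The main obstacle is the converse implication: the passage to the limit in the discretization requires genuine care. One must secure tightness of the space-time momenta (from the uniform second-moment and energy bounds), carry out a measurable selection of the optimal interpolating plans, and verify that the lower semicontinuity of the functional $\int|V|^2\dd\mu$ with respect to weak-$\ast$ convergence of the pairs $(\mu,V\mu)$ is strong enough to transfer the sharp energy bound to the limit. By contrast, the mollification step in the first implication, although technical, is routine once the smooth case is in hand.
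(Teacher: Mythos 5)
The paper offers no proof of this proposition at all: it is imported verbatim as \cite[Thm.~8.3.1]{AGS08}, so the only meaningful comparison is with the standard proof in Ambrosio--Gigli--Savar\'e. Your first implication (continuity equation implies $\AC^2$ with $|\dot\mu_s|\leq\norm{V_s}_{L^2(\mu_s;\R^n)}$) follows their argument essentially verbatim: characteristics in the smooth case, then the mollified pair $\mu_s^\eps=\mu_s\ast\rho_\eps$, $V_s^\eps=\big((\mu_sV_s)\ast\rho_\eps\big)/\mu_s^\eps$ with the Jensen-type norm contraction. One caveat: $V^\eps_s$ is smooth in $x$ but not globally Lipschitz or bounded, so invoking the flow is not as routine as you suggest --- AGS devote genuine work (truncation and growth estimates on the mollified field) to exactly this point; it is a fixable technicality, not a gap. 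Your converse, however, takes a genuinely different route. AGS do not discretize in time: for a.e.\ $s$ they bound the derivative of $s\mapsto\int\phi\,\mathrm{d}\mu_s$ by $|\dot\mu_s|\big(\int|\nabla\phi|^2\mathrm{d}\mu_s\big)^{1/2}$ using optimal plans between nearby marginals, and then produce $V_s$ by Riesz representation on the Hilbert space $T_{\mu_s}\cP_2(\R^n)$ --- the same duality device this paper recycles in its Lemma~\ref{lem:H-1vsL2}. That buys tangency of $V_s$, the pointwise a.e.\ bound $\norm{V_s}\leq|\dot\mu_s|$, and uniqueness all at once. Your piecewise-geodesic interpolation with weak-$\ast$ compactness of the momenta and lower semicontinuity of the Benamou--Brenier energy is a legitimate and well-known alternative (it is essentially the route in Santambrogio's book), and it is arguably more robust, since it avoids duality and adapts to settings where the tangent-space structure is less explicit. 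Its price is exactly what you identify: you only obtain the integrated bound $\int_0^T\norm{V_s}^2\mathrm{d}s\leq\int_0^T|\dot\mu_s|^2\mathrm{d}s$, so the a.e.\ equality must be recovered by combining with the first implication (correct as you state it, though to localize one should note the argument applies on every subinterval, or argue via Lebesgue points), and tangency must be restored a posteriori by projection, where you should also verify measurability in $s$ of the projected field $\Pi_{\mu_s}V_s$ before concluding. With those two sentences added, your proof is complete; no step fails.
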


The next lemma states a simple condition for existence and uniqueness
of solutions to the continuity equation.

\begin{lemma}\label{lem:H-1vsL2}
  Let $\mu\in \cP_2(\R^n)$ with strictly positive Lebesgue density
  $\rho$ and assume that $\mu$ satisfies the Poincar\'e inequality
  \begin{align*}
    \int |f|^2\dd\mu \leq C \int |\nabla f|^2 \dd\mu\;,
  \end{align*}
  for all $f\in C^\infty_c(\R^n)$ with $\int f\dd\mu =0$. Let $s\in
  L^1(\R^n,\Leb)$ be such that $\int s=0$ and
  \begin{align*}
    \norm{s/\sqrt{\rho}}^2_{L^2} ~=~ \int \frac{s^2(x)}{\rho(x)}\dd x ~<~\infty\;.
  \end{align*}
  Then there exists a unique vector field $V\in T_{\mu}\cP_2(\R^n)$ such that the
  equation
  \begin{align*}
    s+\diverg(\mu V)=0
  \end{align*}
  holds in distribution sense. Moreover, we have
  \begin{align}\label{eq:H-1vsL2}
    \norm{V}_{L^2(\mu;\R^n)}^2 ~=~ \int |V|^2\dd\mu ~\leq~ C \norm{s/\sqrt{\rho}}^2_{L^2}\;.
  \end{align}
\end{lemma}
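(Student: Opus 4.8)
The plan is to read the equation $s+\diverg(\mu V)=0$ as the Euler--Lagrange equation of a variational problem on the Hilbert space $\mathcal{H}:=T_\mu\cP_2(\R^n)$ and to solve it via the Riesz representation theorem. Testing against $\phi\in C^\infty_c(\R^n)$ and recalling that $\mu=\rho\,\Leb$, the distributional identity $s+\diverg(\mu V)=0$ is equivalent to
\[
  \int \nabla\phi\cdot V\dd\mu ~=~ \int \phi\, s\dd x \qquad\text{for all }\phi\in C^\infty_c(\R^n)\;.
\]
Accordingly, I would introduce the dense subspace $D:=\{\nabla\phi:\phi\in C^\infty_c(\R^n)\}$ of $\mathcal{H}$ and the linear functional $\ell$ on $D$ defined by $\ell(\nabla\phi)=\int\phi\,s\dd x$. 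The whole proof reduces to producing a vector $V\in\mathcal{H}$ representing $\ell$, together with the norm bound.

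First I would check that $\ell$ is well defined on $D$. If two test functions have the same gradient $\mu$-almost everywhere, then, since $\rho>0$, they differ by a Lebesgue-a.e.\ constant; because $\int s=0$, this constant is invisible to $\ell$, so $\ell$ depends only on $\nabla\phi$. Next comes the crucial step, the boundedness of $\ell$. Replacing $\phi$ by $\phi-\int\phi\dd\mu$ changes neither $\nabla\phi$ nor, thanks to $\int s=0$, the value $\int\phi\,s\dd x$, so I may assume $\int\phi\dd\mu=0$. Writing $s\dd x=(s/\rho)\dd\mu$, applying Cauchy--Schwarz in $L^2(\mu)$ and then the Poincaré inequality gives
\[
  \Big|\int\phi\, s\dd x\Big| ~=~ \Big|\int\phi\,\frac{s}{\rho}\dd\mu\Big| ~\leq~ \norm{\phi}_{L^2(\mu)}\,\norm{s/\rho}_{L^2(\mu)} ~\leq~ \sqrt{C}\,\norm{\nabla\phi}_{L^2(\mu)}\,\norm{s/\sqrt\rho}_{L^2}\;,
\]
where I use $\norm{s/\rho}_{L^2(\mu)}^2=\int(s^2/\rho^2)\rho\dd x=\norm{s/\sqrt\rho}_{L^2}^2$. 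Thus $\ell$ is bounded on $D$ with operator norm at most $\sqrt{C}\,\norm{s/\sqrt\rho}_{L^2}$.

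Finally I would conclude by soft arguments. Being bounded on the dense subspace $D$, the functional $\ell$ extends uniquely to a bounded linear functional on $\mathcal{H}=\overline{D}$ with the same norm, and the Riesz representation theorem yields a unique $V\in\mathcal{H}$ with $\ell(W)=\ip{V,W}=\int V\cdot W\dd\mu$ for every $W\in\mathcal{H}$; taking $W=\nabla\phi$ recovers the weak equation, and $\norm{V}_{L^2(\mu)}=\norm{\ell}$ gives exactly \eqref{eq:H-1vsL2}. Uniqueness inside $T_\mu\cP_2(\R^n)$ is immediate: two solutions differ by an element of $\mathcal{H}$ orthogonal to all $\nabla\phi$, hence orthogonal to $\mathcal{H}$ itself, hence zero.

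The main obstacle is precisely the boundedness estimate, i.e.\ controlling the pairing $\int\phi\,s\dd x$ solely in terms of $\norm{\nabla\phi}_{L^2(\mu)}$; this is where both hypotheses enter essentially. The Poincaré inequality converts control of the gradient into control of $\phi$ in $L^2(\mu)$, while the finiteness of $\norm{s/\sqrt\rho}_{L^2}$ is exactly the dual, $H^{-1}$-type, condition making $s$ act as a bounded functional; the assumptions $s\in L^1$ and $\int s=0$ are what permit the harmless normalization of $\phi$ to mean zero.
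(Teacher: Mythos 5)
Your proof is correct and follows essentially the same route as the paper: bound the pairing $f\mapsto\int s f$ via Cauchy--Schwarz and the Poincar\'e inequality after normalizing to $\int f\dd\mu=0$, then extend to the Hilbert space $T_\mu\cP_2(\R^n)$ and apply the Riesz representation theorem, with uniqueness by orthogonality. The only difference is cosmetic: you spell out the well-definedness of the functional on gradients (the paper compresses this into ``identifying $f$ with its gradient''), which is a welcome extra detail but not a new idea.
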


\begin{proof}
  For any $f\in C^\infty_c(\R^n)$ with $\int f\dd\mu=0$ we deduce from
  the Cauchy--Schwarz and Poincar\'e inequalities that the bilinear $B:f\mapsto \int s f$ satisfies
  \begin{align*}
    B(f)\leq~ \left(\int \frac{s^2}{\rho}\right)^{\frac12}\left(\int f^2 \rho\right)^{\frac12}\leq \norm{s/\sqrt{\rho}}_{L^2} \sqrt{C} \left(\int |\nabla f|^2\dd \mu\right)^{\frac12}\;.
  \end{align*}
  Thus, identifying $f$ with its gradient, the map $B$ can be extended
  to a bounded linear functional on the Hilbert space
  $T_\mu:=T_\mu\cP_2(\R^n)$ equipped with the scalar product
  $$\langle U,W\rangle_{L^2(\mu;\R^n)}=\int U\cdot W \dd\mu\;.$$
  Moreover, the norm of $B$ is bounded by
  $\sqrt{C}\norm{s/\sqrt{\rho}}_{L^2}$. Thus, by the Riesz representation theorem
  there exists a unique vector field $V\in T_\mu$ such that
  $B(W)=\langle V,W\rangle_{L^2(\mu;\R^n)}$ and
  $\norm{V}_{L^2(\mu;\R^n)}\leq\sqrt{C}\norm{s/\sqrt{\rho}}_{L^2}$. In particular, for any 
  $f$ as above we have
  \begin{align*}
    \int sf ~=~ B(f) ~=~ \int V\cdot \nabla f\dd\mu\;.
  \end{align*}
  Thus $V$ is the unique distributional solution to
  $s+\diverg(\mu V)=0$ in $T_\mu$.
\end{proof}

\subsection{Warped structure of the convoluted cone}
\label{sec:warp-cone}
Having identified the heat kernel in Lemma \ref{lem:evi}, we can now analyse in detail the
construction of \cite{GM} in the case of $C(\pi)$. 
Let us define $\iota_t:C(\pi)\to\cP_2(C(\pi))$ via
$\iota_t(p)=\nu^t_p$. This map is obviously injective and by
\eqref{eq:contractivity} satisfies Assumption \ref{ass:iota}. Thus, as
outlined in Section \ref{sec:construction} we introduce
\begin{align*}
  \tilde d_t(p,q)=W_{C(\pi)}(\nu^t_p,\nu^t_q)
\end{align*}
and define $d_t$ to be the associated length distance as in
\eqref{eq:def-dt}. Recall that the use of the the heat equation is
supposed to produce a kind of convolution for metric spaces. The
rotational symmetry of $C(\pi)$ is preserved by this transformation so
that the resulting space will retain a warped structure.

We first give a partial converse to the Lipschitz estimate
\eqref{eq:i-Lip}.

\begin{lemma}\label{lem:biLip}
  For any $t\geq0$ and $r>0$ there exists a constant $C(t,r)$ such
  that for all $p,q\in C(\pi)\setminus B_r$:
  \begin{align}\label{eq:biLip}
    C(t,r) d(p,q)\leq \tilde d_t(p,q)\;,
  \end{align}
  where $B_r=\{p\in C(\pi) : d(o,p)\leq r\}$.
\end{lemma}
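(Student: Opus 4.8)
The plan is to push everything down to $\R^2$ and to exploit that $\iota_t(p)$ is a symmetric mixture of two Gaussians, then bound its Wasserstein distance from below by the distance between the associated Gaussians carrying the same first two moments. First I fix lifts $x,y\in\R^2$ of $p,q$ with $|x|,|y|\ge r$, and recall from Lemma~\ref{lem:lifting} that $\tilde d_t(p,q)=W_{\R^2}(\mu_x,\mu_y)$, where $\mu_x=\tfrac12(\gamma^t_x+\gamma^t_{-x})$ is the symmetric lift of $\nu^t_p$. The decisive observation is that $\mu_x$ has mean $0$ and covariance $\Sigma_x=xx^{T}+2t\,\Id$ (and likewise $\Sigma_y=yy^{T}+2t\,\Id$). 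Since the two barycentres coincide, the crude bound by the distance of the means is void, and the whole separation of $\mu_x$ and $\mu_y$ must be read off from the covariances, whose large principal axes point along $x$ and $y$. Abbreviating $A=|x|^2$, $B=|y|^2$ and $c=|x\cdot y|\ge 0$, the goal becomes a lower bound for $W_{\R^2}(\mu_x,\mu_y)^2$ in terms of $d(p,q)^2=\min(|x-y|,|x+y|)^2=A+B-2c$.

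The second step is to invoke the Gelbrich lower bound $W_{\R^2}(\mu,\nu)^2\ge \Tr\Sigma_\mu+\Tr\Sigma_\nu-2\Tr\big[(\Sigma_\mu^{1/2}\Sigma_\nu\Sigma_\mu^{1/2})^{1/2}\big]$, whose right-hand side is the squared $W_2$-distance between the centred Gaussians with covariances $\Sigma_\mu,\Sigma_\nu$; it follows from a one-line semidefinite argument, maximising the cross-covariance $\Tr\E[ZW^{T}]$ over all couplings $(Z,W)$. Applied to the centred measures $\mu_x,\mu_y$ this reduces the problem to a $2\times2$ matrix computation. In dimension two the fidelity term is explicit, $\Tr[(\Sigma_x^{1/2}\Sigma_y\Sigma_x^{1/2})^{1/2}]=\sqrt{\Tr(\Sigma_x\Sigma_y)+2\sqrt{\det\Sigma_x\,\det\Sigma_y}}$, and a direct computation gives $\Tr(\Sigma_x\Sigma_y)=c^{2}+2t(A+B)+8t^{2}$ and $\det\Sigma_x\,\det\Sigma_y=4t^{2}(A+2t)(B+2t)$. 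Hence, writing $S=A+B$, one arrives at the clean bound
\begin{align*}
 W_{\R^2}(\mu_x,\mu_y)^2 ~\ge~ S+8t-2\sqrt{R},\qquad R=c^{2}+2t\,S+8t^{2}+4t\sqrt{(A+2t)(B+2t)}\;.
\end{align*}

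It remains to bound $R$ from above. I would apply the arithmetic–geometric inequality $\sqrt{(A+2t)(B+2t)}\le \tfrac12 S+2t$ to obtain $R\le c^{2}+4t\,S+16t^{2}=\big(\tfrac{S}{2}+4t\big)^{2}-\tfrac14(S^{2}-4c^{2})$, and then the elementary estimate $\sqrt{M^{2}-N}\le M-\tfrac{N}{2M}$ (valid for $0\le N\le M^2$) with $M=\tfrac S2+4t$ and $N=\tfrac14 d(p,q)^2(S+2c)$. Substituting back, the terms $S+8t$ cancel and one is left with
\begin{align*}
 W_{\R^2}(\mu_x,\mu_y)^2 ~\ge~ \frac{S+2c}{2(S+8t)}\,d(p,q)^2 ~\ge~ \frac{S}{2(S+8t)}\,d(p,q)^2 ~\ge~ \frac{r^{2}}{2(r^{2}+4t)}\,d(p,q)^2\;,
\end{align*}
the last steps using $c\ge0$ and that $s\mapsto s/(s+8t)$ is increasing with $S\ge 2r^{2}$. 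This yields \eqref{eq:biLip} with $C(t,r)=r\big/\sqrt{2(r^{2}+4t)}$. I expect the real work to sit in this last paragraph: verifying the two-dimensional fidelity identity and carrying out the estimate so that the constant stays uniform over all admissible $p,q$. Uniformity in the far field is in fact automatic, since the factor $S/(S+8t)$ only degrades as $S$ approaches its minimal value $2r^{2}$ and tends to $\tfrac12$ as $A,B\to\infty$; the genuine content of the lemma — non-degeneracy near the scale $r$ — is precisely what forces the exclusion of the apex.
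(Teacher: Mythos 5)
Your proof is correct, but it takes a genuinely different route from the paper. The paper argues by dimension reduction: it projects both mixtures onto a fixed line through the origin (at angle $3\pi/8$, chosen so the projection contracts $|x-y|$ by at most $\cos(3\pi/8)$), which reduces the claim to a one-dimensional statement about symmetric two-point Gaussian mixtures; there it uses that the optimal map is the monotone rearrangement, that symmetry forces mass on $\R^+$ to stay on $\R^+$, and then Jensen's inequality to bound $W_\R$ below by the gap between the means of the folded Gaussians $\omega_\#\gamma^1_x$, $\omega(s)=|s|$ — the key point being strict convexity of $x\mapsto \int |s-x|\dd\gamma^1_0(s)$, which is non-degenerate exactly for $x\geq r>0$; convexity of $W^2_\R$ in $t$ plus scaling reduces everything to $t=1$, yielding an implicit constant $C(t_0,r)$ uniform over $t\leq t_0$. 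You instead stay in $\R^2$ and lower-bound $W_{\R^2}(\mu_x,\mu_y)$ by the Bures--Wasserstein distance between the Gaussian surrogates with matched (zero) means and covariances $\Sigma_x=xx^T+2t\Id$ — the Gelbrich bound, whose semidefinite proof you correctly sketch — and then exploit the explicit $2\times 2$ fidelity identity. I have checked the computations: $\Tr(\Sigma_x\Sigma_y)=c^2+2t(A+B)+8t^2$, $\det\Sigma_x\det\Sigma_y=4t^2(A+2t)(B+2t)$, the AM--GM step, the algebraic identity $c^2+4tS+16t^2=(S/2+4t)^2-\tfrac14(S^2-4c^2)$, the factorization $S^2-4c^2=d(p,q)^2(S+2c)$, the applicability of $\sqrt{M^2-N}\leq M-N/(2M)$ (indeed $N\leq M^2$ here), and the exact cancellation of $S+8t$ against $2M$; all are right, and the final constant $C(t,r)=r/\sqrt{2(r^2+4t)}$ is valid for every $t\geq 0$ simultaneously, consistent at $t=0$ (where $\tilde d_0=d$) and with the correct order of degeneration $r/\sqrt{8t}$ for $r\ll\sqrt t$, matching the paper's later asymptotics $R(r)\sim r^2/2$ up to a constant. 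What each approach buys: the paper's projection argument is elementary and self-contained (no moment inequality needed, only one-dimensional transport), while yours is more quantitative — it gives a clean, explicit, monotone-in-$t$ constant and avoids both the rearrangement analysis and the reduction-by-scaling, at the mild cost of invoking Gelbrich's inequality, whose one-line covariance proof you would want to include for completeness.
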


In particular, in view of Proposition \ref{prop:convergence} this shows
that $\tilde d_t$ and $d_t$ induce the same topology as the cone
distance on $C(\pi)$.

\begin{proof}
  Let $x,y\in\R^2$ such that $d(p,q)=|x-y|$. Without restriction we
  can assume that $|x|\leq|y|$ and that $x=(x_1,0)$, $y=(y_1,y_2)$
  with $y_1,y_2\geq 0$. Let $A$ be the line passing through the origin
  at angle $3\pi/8$ with the first coordinate axis and let $\proj_A$
  denote the orthogonal projection onto $L$. Then, setting
  $\mu^t_x=\frac12(\gamma^t_x+\gamma^t_{-x})$, we have
  \begin{align*}
    \tilde d_t(p,q) = W_{\R^2}(\mu^t_x,\mu^t_y) \geq W_{\R^1}((\proj_A)_\#\mu^t_x,(\proj_A)_\#\mu^t_y)
  \end{align*}
  Note that $(\proj_A)_\#\mu^t_x$ is the mixture of two
  one-dimensional Gaussians with variance $2t$ and centers $\pm
  \proj_A x$. Note further that $|\proj_A(x)-\proj_A(y)|\geq
  \cos(3\pi/8) |x-y|$ since the angle of $A$ with $y-x$ is less than
  $3\pi/8$. Thus it suffices to establish the following claim: For any
  $t_0\geq0$ and $r>0$ there exists a constant $C(t_0,r)$ such that
  for all $t\leq t_0$ and $x,y\geq r$:
  \begin{align}\label{eq:biLip2}
    C(t_0,r) |x-y|\leq  W_{\R}\big(\frac12(\gamma^t_x+\gamma^t_{-x}),\frac12(\gamma^t_y+\gamma^t_{-y})\big)\;,
  \end{align}
  where by abuse of notation $\gamma^t_x$ denotes also the
  one-dimensional Gaussian measure with variance $2t$ and center
  $x$. By convexity of $W^2_\R$ the right hand side is decreasing in
  $t$. Thus, by scaling it suffices to consider $t=1$. In dimension 1, the optimal transport plan is known to be the
    monotonic rearrangement. The two measures in \eqref{eq:biLip2} are symmetric so
    that the mass on $\R^+$ is mapped on $\R^+$. Observe that the measure $\gamma^1_x+\gamma^1_{-x}$ restricted to $\R^+$ is distributed as $\omega_\#\gamma^1_x$ where $\omega:x\mapsto|x|$. Hence the right hand side of \eqref{eq:biLip2} is $W_\R(\omega_\#\gamma^1_x,\omega_\#\gamma^1_y)$. Applying Jensen's inequality in the definition of $W_\R$ we see that the distance between the means of these measures is a lower bound. But the mean of $\omega_\#\gamma^1_x$ is
$\int |s| \dd\gamma^1_x(s)=\int |s-x| \dd\gamma^1_0(s)$. As a function of $x\in\R^+$, this is a
strictly convex function with derivative zero at zero on the right and tangent to the first bisector at $+\infty$. In fact the second derivative in distribution sense is $2\gamma_0^1$. The estimate \eqref{eq:biLip2} follows from these remarks together, provided $x,y\geq r$ for some $r>0$.
\end{proof}

Next, we show that in the definition of $d_t$ we can restrict the
infimum to Lipschitz curves with respect to~the cone distance $d$.
\begin{lemma}\label{lem:biLip-cone}
For any $t\geq 0$ and $p,q\in C(\pi)$ we have
\begin{align}\label{eq:def-dt-lip}
  d_t(p,q) ~=~ \inf\left\{\int_0^T|\dot p_s|_t \dd s~\right\}\;,
\end{align}
where the infimum is taken over all $d$-Lipschitz curves $(p_s)_{s\in[0,T]}$ such
that $p_0=p,p_T=q$ and $|\dot p_s|_t$ denotes the metric derivative
with respect to~$\tilde d_t$. If $p\neq o$ and $q\neq o$, one can restrict to the curves supported in $C(\pi)\setminus\{o\}$.
\end{lemma}
Thus the construction of $d_t$ given here is consistent with the general
construction in RCD spaces given in \cite{GM}  (see Remark \ref{rem:def-dt}).

\begin{proof}
  The inequality ``$\leq$'' follows immediately from the fact that any
  $d$-Lipschitz curve is also $\tilde d_t$-Lipschitz by
  \eqref{eq:contractivity}. To see the reverse inequality, first
  recall that by Remark \ref{rem:def-dt2} we can restrict the infimum
  in \eqref{eq:def-dt} to $\tilde d_t$-Lipschitz curves. Then the
  statement follows from Lemma \ref{lem:biLip}. Indeed, given
  $\eps>0$, let $(p_s)_{s\in[0,T]}$ be a $\tilde d_t$-Lipschitz curve
  such that
  \begin{align*}
    \int_0^T|\dot p_s|_t\dd s \leq d_t(p,q) + \eps\;.
  \end{align*}
  Recall that $(p_s)$ is $d$-continuous. If it avoids the origin (and
  thus also a neighborhood around it) then $(p_s)$ is also
  $d$-Lipschitz by \eqref{eq:biLip}. If the curve hits the origin, put
  for sufficiently small $r>0$:
  \begin{align*}
    s_1:=\inf\{s\in[0,T]:p_s\in B_r\}\;,\qquad s_2:=\sup\{s\in[0,T]:p_s\in B_r\}\;.
  \end{align*}
  We can construct a $d$-Lipschitz curve $(\tilde p_s)$ by replacing
  the part $(p_s)_{s\in[s_1,s_2]}$ with a piece of circle connecting
  $p_{s_1}$ to $p_{s_2}$. From \eqref{eq:contractivity} we see that
  the $\tilde d_t$-length of $(\tilde p_s)$ is bounded by
  $d_t(p,q)+\eps+\pi r$. Choosing $r$ sufficiently small and using the
  arbitrariness of $\eps$ we obtain the inequality ``$\geq$'' in
  \eqref{eq:def-dt-lip}.
  
In the case $p=o$ or $q=o$, the ray from or to the apex is a minimizing curve. Note that it is a $d$-Lipschitz curve.
\end{proof}

Let us further observe the particular behavior of the distance under
scaling of space and time.
\begin{lemma}\label{lem:scaling}
  For any $t>0$ and $p,q\in C(\pi)$ we have
  \begin{align}\label{eq:scaling}
    d_t(p,q)=\sqrt{t}\cdot d_1\big(\sqrt{t}^{-1}p,\sqrt{t}^{-1}q\big)\;.
  \end{align}
  Here, for $\lambda\geq 0$ and $p=(r,\alpha)\in C(\pi)$ we set
  $\lambda p=(\lambda r,\alpha)$.
\end{lemma}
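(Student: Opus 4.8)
The plan is to exploit the fact that the heat kernel on $C(\pi)$ arises, via the lifting of Lemma \ref{lem:lifting}, from Gaussian measures on $\R^2$, and that Gaussians transform in an exact way under spatial dilation. The key identity I would establish first is the following behavior of the chord distance $\tilde d_t$. Let $\delta_\lambda:C(\pi)\to C(\pi)$ denote the radial dilation $\delta_\lambda(r,\alpha)=(\lambda r,\alpha)$, which lifts to the linear map $x\mapsto\lambda x$ on $\R^2$ (commuting with $\sigma$). Writing $\mu^t_x=\frac12(\gamma^t_x+\gamma^t_{-x})$ for the symmetric lift of $\nu^t_{P(x)}$, the crucial observation is that the dilation $x\mapsto\sqrt{t}\,x$ pushes the Gaussian $\gamma^1_x$ forward to $\gamma^t_{\sqrt{t}\,x}$, since multiplying a variance-$2$ Gaussian centered at $x$ by $\sqrt t$ yields a variance-$2t$ Gaussian centered at $\sqrt t\,x$. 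Consequently $(\delta_{\sqrt t})_\#\,\mu^1_x=\mu^t_{\sqrt t\,x}$ at the level of symmetric lifts, and since Wasserstein distance scales linearly under a dilation of the underlying Euclidean space, Lemma \ref{lem:lifting} gives
\begin{align*}
  \tilde d_t\big(\delta_{\sqrt t}\,p,\delta_{\sqrt t}\,q\big)
  ~=~ W_{\R^2}\big(\mu^t_{\sqrt t\,x},\mu^t_{\sqrt t\,y}\big)
  ~=~ \sqrt t\,W_{\R^2}\big(\mu^1_{x},\mu^1_{y}\big)
  ~=~ \sqrt t\,\tilde d_1(p,q)\;,
\end{align*}
where $P(x)=p,P(y)=q$. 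In other words, $\delta_{\sqrt t}$ is a homothety of ratio $\sqrt t$ from $(C(\pi),\tilde d_1)$ to $(C(\pi),\tilde d_t)$.

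Once this homothety relation for the chord distances $\tilde d_t$ is in hand, the scaling relation for the length distances $d_t$ follows formally. The length distance is defined in \eqref{eq:def-dt} as an infimum over curves of the integrated metric derivative with respect to $\tilde d_t$; by Lemma \ref{lem:biLip-cone} we may take this infimum over $d$-Lipschitz curves, a class that is preserved by the (bi-Lipschitz) dilation $\delta_{\sqrt t}$. For a curve $(p_s)$ I would compute that its $\tilde d_t$-metric derivative satisfies $|\dot{p}_s|_t = \sqrt t\,|\,\overline{(\delta_{\sqrt t^{-1}}p)}{}_s\,|_1$, i.e. the metric derivative of the rescaled curve $s\mapsto\delta_{\sqrt t^{-1}}p_s$ with respect to $\tilde d_1$, multiplied by $\sqrt t$; this is immediate from the homothety identity above applied to the increments $\tilde d_t(p_{s+h},p_s)=\sqrt t\,\tilde d_1(\delta_{\sqrt t^{-1}}p_{s+h},\delta_{\sqrt t^{-1}}p_s)$. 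Integrating in $s$ and reparametrizing the infimum through the bijection $\gamma\mapsto\delta_{\sqrt t^{-1}}\gamma$ between admissible curves joining $p,q$ and admissible curves joining $\delta_{\sqrt t^{-1}}p,\delta_{\sqrt t^{-1}}q$, I obtain exactly $d_t(p,q)=\sqrt t\,d_1(\delta_{\sqrt t^{-1}}p,\delta_{\sqrt t^{-1}}q)$, which is \eqref{eq:scaling}.

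I do not expect a genuine obstacle here; the only point requiring a little care is the passage from the increment-level homothety to the statement about metric derivatives and hence the length functional, so that one is sure the infimum is taken over the same class of curves on both sides. This is precisely why invoking Lemma \ref{lem:biLip-cone} (to restrict to $d$-Lipschitz curves, which form a dilation-invariant class and on which $\tilde d_t$ and $d$ are comparable by Lemma \ref{lem:biLip}) is the clean way to proceed, rather than arguing directly with the a priori larger class of $\tilde d_t$-absolutely continuous curves. The Gaussian scaling computation itself is routine, and everything else is a formal change of variables in the length integral.
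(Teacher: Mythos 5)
Your proposal is correct and follows essentially the same route as the paper: the paper also first establishes the identity for the chord distance $\tilde d_t$ via the Gaussian dilation relation $\gamma^t_x=(s_{\sqrt t})_\#\gamma^1_{\sqrt{t}^{-1}x}$ together with Lemma \ref{lem:lifting}, and then notes that the identity ``passes easily to the associated length distance.'' Your spelled-out passage to the length distance (increment-level homothety, metric derivatives, and the curve bijection $\gamma\mapsto\delta_{\sqrt t^{-1}}\gamma$) is exactly the easy step the paper leaves implicit; the detour through Lemma \ref{lem:biLip-cone} is harmless but not needed, since the dilation already maps $\tilde d_1$-absolutely continuous curves bijectively onto $\tilde d_t$-absolutely continuous ones.
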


\begin{proof}
  It suffices to establish the identity \eqref{eq:scaling} with $d_t$
  replaced by $\tilde d_t$. It then passes easily to the associated
  length distance. Recall that $\tilde d_t (p,q) =
  W_{2,\R^2}(\frac12(\gamma^t_x+\gamma^t_{-x}),\frac12(\gamma^t_y+\gamma^t_{-y})$
  for $x,y$ such that $P(x)=p,P(y)=q$. Introduce the dilation
  $s_\lambda:x\mapsto \lambda x$ and note that
  $\gamma^t_x=(s_{\sqrt t})_\#\gamma^1_{\sqrt{t}^{-1}x}$. Now the
  claim is immediate.
\end{proof}

We have the following result on the metric structure of the convoluted
cone.

\begin{proposition}\label{prop:warp}
  The distance $d_t$ is induced by a metric tensor $g^t$ on the
  open manifold $C(\pi)\setminus\{o\}$ which is of warped product form
  \begin{align}\label{eq:warp1}
    g^t_{(r,\alpha)}(\cdot,\cdot) = R(r/\sqrt{t})\mathrm{d r}^2 + r^2A(r/\sqrt{t})\mathrm{d}\alpha ^2\;,
  \end{align}
  where $R,A:(0,\infty)\to(0,1]$ are bounded
  functions. Moreover, the distance $d_t$ on the full cone is obtained
  for $p_0,p_1\in C(\pi)$ by
  \begin{align}\label{eq:warp}
    d_t\big(p_0,p_1\big) = \inf \int_0^1\sqrt{R(r_s/\sqrt{t})|\dot r_s|^2 + r_s^2A(r_s/\sqrt{t})|\dot \alpha_s|^2}\dd s\;,
  \end{align}
  where the infimum is taken over all Lipschitz curves
  $(p_s)_{s\in[0,1]}$ of $(C(\pi),d)$ connecting $p_0,p_1$ and $|\dot
  r_s|,|\dot \alpha_s|$ denote the metric derivatives of the polar
  coordinates of $p_s$.
\end{proposition}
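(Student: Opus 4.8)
The plan is to read off the infinitesimal structure of $\tilde d_t$ on the open manifold $C(\pi)\setminus\{o\}$ by computing the metric derivative $|\dot p_s|_t$ of a smooth curve, to show that this derivative is the square root of a quadratic form $g^t$ in the velocity, and then to exploit the rotational and reflection symmetries of the construction together with the scaling of Lemma \ref{lem:scaling} to force $g^t$ into the warped shape \eqref{eq:warp1}. By Lemma \ref{lem:biLip-cone} the infimum in \eqref{eq:def-dt} may be computed over $d$-Lipschitz curves, and over those avoiding the apex when $p_0,p_1\neq o$; identifying the integrand as $\sqrt{g^t(\dot p_s,\dot p_s)}$ then turns \eqref{eq:def-dt} into the length functional \eqref{eq:warp}.

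First I would fix such a curve $(p_s)$ and lift it to a curve $(x_s)$ in $\R^2\setminus\{0\}$, unique up to a global sign which does not affect the symmetric measure $\mu^t_{x_s}=\frac12(\gamma^t_{x_s}+\gamma^t_{-x_s})$. By Lemma \ref{lem:lifting}, $|\dot p_s|_t$ equals the metric derivative of $s\mapsto\mu^t_{x_s}$ in $(\cP_2(\R^2),W)$. Differentiating the density shows that the curve is absolutely continuous and that the source $\partial_s\mu^t_{x_s}$ depends \emph{linearly} on $\dot x_s$, has vanishing integral, and satisfies the integrability bound required in Lemma \ref{lem:H-1vsL2}; since the Gaussian mixture $\mu^t_{x_s}$ has strictly positive density and satisfies a Poincar\'e inequality, that lemma produces a unique velocity field $V_s\in T_{\mu^t_{x_s}}\cP_2(\R^2)$ solving the continuity equation, and \cite[Thm.~8.3.1]{AGS08} gives $|\dot p_s|_t=\norm{V_s}_{L^2(\mu^t_{x_s})}$ for a.e.~$s$. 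Because the solution operator of the continuity equation is linear, $V_s$ is linear in $\dot x_s$, so $\norm{V_s}_{L^2}^2=:Q_{x_s}(\dot x_s)$ is a symmetric quadratic form; smooth dependence of the density on $x$ together with elliptic regularity makes $x\mapsto Q_x$ continuous, and $Q_x=Q_{-x}$, so $Q$ descends to a continuous Riemannian metric tensor $g^t$ on $C(\pi)\setminus\{o\}$ with $|\dot p_s|_t=\sqrt{g^t_{p_s}(\dot p_s,\dot p_s)}$.

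It then remains to determine the shape of $Q_x$. Rotational invariance about $o$ lifts to rotations of $\R^2$ under which $x\mapsto\mu^t_x$ is equivariant, whence $Q_{\mathcal R x}(\mathcal R\dot x)=Q_x(\dot x)$ and the coefficients of $Q_x$ depend only on $r=|x|$. Placing $x=(r,0)$ on the first axis, the reflection across $\R x$ is a linear isometry of $\R^2$ fixing $\mu^t_x$; it preserves $Q_x$ while reversing the angular component $r\dot\alpha$ of the velocity and fixing the radial component $\dot r$, which forces the off-diagonal term to vanish. Hence $Q_x=R_t(r)\,\dot r^2+r^2A_t(r)\,\dot\alpha^2$, matching \eqref{eq:warp1} once Lemma \ref{lem:scaling}, which at the level of $\tilde d_t$ reads $\tilde d_t(p,q)=\sqrt{t}\,\tilde d_1(\sqrt{t}^{-1}p,\sqrt{t}^{-1}q)$, is used to deduce $R_t(r)=R(r/\sqrt{t})$ and $A_t(r)=A(r/\sqrt{t})$ with $R:=R_1$, $A:=A_1$. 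The upper bound $R,A\leq 1$ follows from the contraction \eqref{eq:contractivity}, which gives $g^t\leq g$ for the cone metric $g=\mathrm{d}r^2+r^2\mathrm{d}\alpha^2$, while the pointwise positivity $R,A>0$ follows from the lower bound of Lemma \ref{lem:biLip}. Integrating and taking the infimum over $d$-Lipschitz curves as permitted by Lemma \ref{lem:biLip-cone} yields \eqref{eq:warp}.

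The main obstacle is the second step: rigorously establishing that the metric derivative with respect to $\tilde d_t$ coincides with the $L^2$ norm of the continuity-equation velocity field, and that this norm is a continuous, positive-definite quadratic form in the velocity. This requires the absolute continuity of $s\mapsto\mu^t_{x_s}$, the Poincar\'e inequality for the Gaussian mixture (which holds by standard perturbation arguments, or by known results for finite Gaussian mixtures), verification of the hypotheses of Lemma \ref{lem:H-1vsL2}, and the continuous dependence of $V_s$ on the base point. By contrast, the symmetry and scaling reductions in the third step are essentially formal once the quadratic form $Q_x$ is available.
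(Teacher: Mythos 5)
Your proposal is correct and follows essentially the same route as the paper's proof: lift curves to symmetric Gaussian mixtures on $\R^2$ via Lemma \ref{lem:lifting}, identify the metric derivative with the $L^2(\mu_x)$-norm of the unique tangent velocity field from Lemma \ref{lem:H-1vsL2} (whose linearity in the velocity yields a quadratic form), impose rotation and reflection invariance to obtain the warped shape, invoke Lemma \ref{lem:scaling} for the $t$-dependence, and use Lemma \ref{lem:biLip-cone} to reduce to $d$-Lipschitz curves. The only minor divergence is the bound $R,A\leq 1$, which you derive directly from the contraction \eqref{eq:contractivity} via the metric-derivative comparison $g^t\leq g$, whereas the paper exhibits an explicit competitor vector field $y\mapsto(2\lambda_x(y)-1)w$ and contracts by orthogonal projection onto $T_{\mu_x}\cP_2(\R^2)$ — both are valid, and you additionally make the positivity $R,A>0$ explicit via Lemma \ref{lem:biLip}, which the paper leaves implicit.
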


In \eqref{eq:warp1} $R$ and $A$ stand for radial and angular.

\begin{proof}
  Recall from Section \ref{sec:construction} that
  \begin{align*}
    d_t(p,q)=\inf\limits_{(p_s)} \int_0^T |\dot \nu^t_{p_s}|\dd s\;,
  \end{align*}
  where $\nu^t_p=\iota_t(p)$ and $|\dot \nu^t_{p_s}|$ denotes the metric
  derivative with respect to $W_{C(\pi)}$. We will first use the
  lifting to $\R^2$ and the characterization of the Wasserstein metric
  derivative in terms of solutions to the continuity equation to
  relate $d_t$ to a smooth metric tensor on $\R^2$ and then we will
  push this tensor to the cone to obtain the desired warped structure.

  From Lemma \ref{lem:scaling} we immediately infer that it is
  sufficient to consider $t=1$. For brevity let us set
  $\mu_x=\frac12 \gamma_x +\frac12 \gamma_{-x}$ and let $f_x$ be its
  density, i.e.\ $f_x(y)=\frac12 \eta(y-x) + \frac12 \eta(y+x)$, where
  $\eta(y)=\frac1{4\pi}\exp(-|y|^2/4)$ is the density of the
  2-dimensional Gaussian at time $1$.

  We define a metric tensor $\tilde g$ on $\R^2 $ by setting
  for $x,w\in\R^2$:
  \begin{align*}
    \tilde g_x(v,w)~=~\int \langle V^v_x,V^w_x\rangle \dd\mu_x\;,
  \end{align*}
  where $V_x^w$ is the unique vector field in $T_{\mu_x}\cP(\R^2)$ solving 
  \begin{align}\label{eq:conteq}
    \frac{\dd}{\dd h}\big|_{h=0}f_{x+hw} ~&=~ \frac12 w\cdot \big(\nabla \eta(x+y)-\nabla \eta(x-y)\big) ~=~ -\diverg(\mu_xV_x^w) 
  \end{align}
  given by Lemma \ref{lem:H-1vsL2} (applied to $s=\frac{\dd}{\dd h}
  f_{x+hw}$ and $\mu=\mu_x$). Indeed, by uniqueness, $V_x^w$ depends
  linearly on $w$, hence $g_x(v,w)$ is a bilinear form. 
  
  Now, define a metric tensor $g$ on the open manifold
  $C(\pi)\setminus\{o\}$ by setting for $p=(r,\alpha)\in
  C(\pi)\setminus\{o\}$ and $v,\theta\in\R$:
  \begin{align*}
  g_{p}\big((v,\theta),(v,\theta)\big) = \tilde g_x(w,w)\;,
  \end{align*}
  where
  \begin{align*}
   x=r\binom{\cos\alpha}{\sin\alpha}\;,\quad w = v \binom{\cos\alpha}{\sin\alpha} +\theta r \binom{-\sin\alpha}{\cos\alpha}\;,
  \end{align*}
  and extend via polarization. That $g$ takes the form \eqref{eq:warp1} is a consequence of the fact that $\tilde g_x(w,w)$ is
  invariant under reflecting $w$ at the line passing through the
  origin and $x$, implying that
  $g_p\big((v,\theta),(v,\theta)\big)=g_p\big((v,-\theta),(v,-\theta)\big)$, and 
  the invariance of $\tilde g_x(w,w)$ under simultaneous rotation of $x,w$. Explicitly, we have
  \begin{align}\label{def:AR}
  R(r)=\tilde g_{r(1,0)}\big((1,0),(1,0)\big)\;,\qquad A(r)=\tilde g_{r(1,0)}\big((0,1),(0,1)\big)\;.
  \end{align}
  Let us now prove \eqref{eq:warp}, i.e.~that $d_1$ is induced by the
  tensor $g$. By Lemma \ref{lem:biLip-cone} we have
  \begin{align*}
    d_1(p_0,p_1) = \inf\int_0^1 |\dot p_s|_1 \dd s\;,
  \end{align*}
  where $|\dot p_s|_1$ is the metric derivative of $p_s$ with respect
  to the distance $\tilde d_1$ and the infimum is over Lipschitz
  curves in $(C(\pi),d)$. Let us consider a Lipschitz curve
  $(p_s)_{s\in[0,1]}$ in $(C(\pi),d)$ with polar coordinates
  $(r_s)_s$ and $(\alpha_s)_s$. Let $(x_s)_{s\in[0,1]}$ be a continuous a curve such
  that $P(x_s)=p_s$. By \eqref{eq:contractivity} the curves
  $\nu^1_{p_s}$ and $\mu_{x_s}$ are Lipschitz with respect to
  $W_{C(\pi)}$ and $W_{\R^2}$ respectively and by definition of
  $\tilde d_1$ and Lemma \ref{lem:lifting} we have
  \begin{align*}
    |\dot p_s|_1 = |\dot\nu^1_{p_s}| = |\dot\mu_{x_s}|\;,
  \end{align*}
  where the latter two metric derivatives are calculated with respect
  to $W_{C(\pi)}$ and $W_{\R^2}$ respectively. By the characterization
  of absolutely continuous curves, there exists for a.e.~$s$ a vector
  field $V_s\in T_{\mu_{x_s}}\cP(\R^2)$ such that the continuity equation
  $\partial_s\mu_{x_s} =-\diverg(\mu_{x_s}V_s)$ holds in distribution
  sense. But for a.e.~$s$ the left hand side is given by $\frac12
  w_s\cdot \big(\nabla \eta(\cdot+x_s)-\nabla \eta(\cdot-x_s)\big)$
  with
  $$w_s = \dot r_s \binom{\cos\alpha_s}{\sin\alpha_s} +\dot\alpha_s r_s \binom{-\sin\alpha_s}{\cos\alpha_s}\;.$$
  Hence, the uniqueness statement in Lemma \ref{lem:H-1vsL2} implies
  that for a.e.~$s$ we have $V_s=V_{x_s}^{w_s}$ and thus
  \begin{align*}
    |\dot p_s|_1^2 =  \tilde g_{x_s}(w_s,w_s) = g_{p_s}\big((\dot r_s,\dot \alpha_s),(\dot r_s,\dot \alpha_s)\big)
                = |\dot r_s|^2 R(r_s) + r_s^2 |\dot\alpha_s|^2 A(r_s)\;.
  \end{align*}
  This yields that $d_t(p_0,p_1)$ is given by the right hand side in
  \eqref{eq:warp}.

Finally, we turn to the boundedness of $R$ and $A$. In fact for $w$ a vector of $\R^2$ the vector field $V:y\mapsto\lambda_x(y)w+(1-\lambda_x(y))(-w)$ where $\lambda_x(y)=[\eta(y-x)/(\eta(y-x)+\eta(y+x))]$ satisfies \eqref{eq:conteq} in place of $V^w_x$. It is an element of $L^2(\mu_x;\R^n)$ with norm smaller than or equal to $|w|$. The orthogonal projection on $T_{\mu_x}\cP(\R^2)$ contracts the norm and provides another solution to \eqref{eq:conteq}. According to the uniqueness statement in Lemma \ref{lem:H-1vsL2} it is $V^w_x$. Hence we have proved $\tilde{g}_x(w,w)\leq |w|^2$. It follows that the functions $A$ and $R$ defined in \eqref{def:AR} are bounded from above by $1$.

\end{proof}

\begin{remark}\label{rem:smoothness}
  We believe that the functions $R$ and $A$ in the proposition above
  are smooth, so that $d_t$ would be induced by a smooth metric
  tensor on $C(\pi)\setminus\{o\}$. From the explicit
  expression \eqref{eq:R-explicit} for $R$ given below in the proof of
  Theorem \ref{thm:singular}, it is readily checked $R$ is
  smooth. Proving smoothness for $A$ seems non-trivial due to the
  non-compactness of the cone.
\end{remark}

\begin{proposition}\label{prop:convergence1}
  As $t$ goes to zero, the metric space $(C(\pi),d_t)$ tends to
  $(C(\pi),d)$ pointwise and in the pointed Gromov--Hausdorff
  topology.
\end{proposition}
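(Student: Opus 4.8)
The plan is to prove two convergences as $t \to 0$: pointwise convergence $d_t(p,q) \to d(p,q)$ for all fixed $p,q$, and pointed Gromov--Hausdorff convergence of $(C(\pi),d_t)$ to $(C(\pi),d)$.

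\textbf{Pointwise convergence.} First I would exploit the scaling relation from Lemma \ref{lem:scaling}. By \eqref{eq:scaling} we have $d_t(p,q) = \sqrt{t}\, d_1(\sqrt{t}^{-1}p, \sqrt{t}^{-1}q)$, so pointwise convergence is equivalent to understanding $d_1$ at large radii. The key observation is that as $t\to0$ (equivalently, as we dilate $p,q$ outward by $\sqrt{t}^{-1}$), the relevant points move far from the apex. I would use the warped-product representation \eqref{eq:warp} together with the explicit behavior of the functions $R$ and $A$ at infinity. From Proposition \ref{prop:warp} we know $R,A:(0,\infty)\to(0,1]$ are bounded above by $1$; the crucial input is that $R(r)\to 1$ and $A(r)\to 1$ as $r\to\infty$. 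Intuitively, far from the apex the two Gaussian bumps $\gamma^1_x,\gamma^1_{-x}$ in the mixture $\mu_x$ are widely separated, so moving $x$ costs essentially the Euclidean amount, and the metric tensor $\tilde g_x$ approaches the flat Euclidean tensor. I would make this rigorous by estimating the vector field $V^w_x$ from \eqref{eq:conteq}: the explicit competitor $V(y)=\lambda_x(y)w - (1-\lambda_x(y))w$ used at the end of the proof of Proposition \ref{prop:warp} has norm close to $|w|$ when $|x|$ is large (since $\lambda_x$ is nearly $1$ on the support of one bump and nearly $0$ on the other), giving $\tilde g_x(w,w)\to|w|^2$. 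Combined with the upper bound $\tilde g_x(w,w)\le|w|^2$, this yields $R(r),A(r)\to1$, whence the warped length functional \eqref{eq:warp} converges to the Euclidean cone length functional, giving $d_1(\sqrt t^{-1}p,\sqrt t^{-1}q)\to \sqrt t^{-1} d(p,q)$ after rescaling; multiplying by $\sqrt t$ gives $d_t(p,q)\to d(p,q)$.

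\textbf{Gromov--Hausdorff convergence.} For the pointed Gromov--Hausdorff statement, a natural approach is to use the identity map $\mathrm{Id}: (C(\pi),d) \to (C(\pi),d_t)$ as an approximate isometry with basepoint $o$. From \eqref{eq:d-t-Lip} we have $d_t \le C_t\, d$, and from the contraction \eqref{eq:contractivity} we have $\tilde d_t \le d$, so $d_t \le d$ as well (since $d$ is a length distance and curves $d$-Lipschitz are $\tilde d_t$-Lipschitz with smaller metric derivative). For the lower bound on any fixed ball $B_R$ around $o$, I would combine the biLipschitz estimate of Lemma \ref{lem:biLip} away from the apex with the radial behavior near the apex. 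The cleanest route is to control the distortion of the identity map on a ball of fixed $d$-radius $R$: one must show $\sup_{p,q \in B_R}|d_t(p,q) - d(p,q)| \to 0$ as $t\to0$. Again invoking scaling, on $B_R$ the relevant rescaled radii $r/\sqrt t$ are large whenever $r$ is bounded below, so $R(r/\sqrt t),A(r/\sqrt t)\to1$ uniformly on annuli $\{r\ge\delta\}$; the only delicate region is a small ball $B_\delta$ near the apex, whose $d$-diameter is $2\delta$ and whose $d_t$-diameter is at most $2\delta$ by $d_t\le d$. Thus any pair of points can be compared up to an error controlled by $\delta$, and letting $\delta\to0$ after $t\to0$ closes the argument.

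\textbf{Main obstacle.} The principal difficulty is establishing the asymptotics $R(r)\to1$ and $A(r)\to1$ as $r\to\infty$ with enough uniformity, and in particular handling the behavior near the apex where the two Gaussian bumps overlap and the warped tensor genuinely differs from the flat one. For $R$ one can rely on the explicit expression \eqref{eq:R-explicit} referenced in Remark \ref{rem:smoothness}; for $A$, whose smoothness is noted to be subtle, I would avoid needing fine regularity and instead extract only the limit $A(r)\to1$ via the quantitative separation of the bumps, estimating $\|V^w_x - w\|_{L^2(\mu_x)}^2$ by the mass that $\mu_x$ places in the region where $\lambda_x$ is bounded away from $0$ and $1$, which decays exponentially in $|x|^2$. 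Controlling the apex region uniformly as $t\to0$ is the part requiring the most care, but the symmetry of the construction and the a priori bound $d_t\le d$ keep it tractable.
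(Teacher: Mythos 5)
Your upper bound is fine, but the argument collapses exactly where it matters, and you have overlooked the two-line proof that the paper uses. Since $\tilde d_t$ is the chord distance and $d_t$ the induced length distance, one always has $\tilde d_t\leq d_t$; combined with your observation $d_t\leq d$ (valid because the contraction \eqref{eq:contractivity} gives $C_t\equiv 1$), this yields the sandwich $\tilde d_t\leq d_t\leq d$. Proposition \ref{prop:convergence} already gives $\tilde d_t(p,q)\to\tilde d_0(p,q)=W(\delta_p,\delta_q)=d(p,q)$ as $t\to0$, pointwise and uniformly on compact sets (equicontinuity from the uniform Lipschitz bound), so pointwise convergence of $d_t$ and the pointed Gromov--Hausdorff statement follow immediately. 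That is the paper's entire proof. Your route through the asymptotics $R(r),A(r)\to1$ at infinity is therefore a heavy detour; worse, in the paper this asymptotic behaviour (Remark \ref{rem:angle-infinity}) is \emph{deduced from} Proposition \ref{prop:convergence1}, not the other way around, so citing it would be circular and you would have to prove it from scratch.

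And there the sketch has a genuine gap. Since $R,A\leq1$, the inequality $d_t\leq d$ is free from \eqref{eq:warp}; what your argument actually needs is the lower bound $R(r),A(r)\geq 1-\eps(r)$, and your mechanism does not deliver it. The coefficient is $\tilde g_x(w,w)=\norm{V^w_x}^2_{L^2(\mu_x)}$, where $V^w_x$ is the \emph{orthogonal projection} of the explicit competitor $V(y)=\big(2\lambda_x(y)-1\big)w$ onto $T_{\mu_x}\cP_2(\R^2)$. Projection contracts norms, so showing that the competitor's norm is close to $|w|$ only reproduces the upper bound $\tilde g_x(w,w)\leq|w|^2$ and says nothing about how much the projection loses. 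For $w$ parallel to $x$ the competitor happens to be a gradient, so $R$ is accessible via \eqref{eq:R-explicit}; but for $w\perp x$ (the case of $A$) it is not a gradient, and your proposed estimate of $\norm{V^w_x-w}^2_{L^2(\mu_x)}$ by the mass of the overlap region is false as stated: the true solution is close to $+w$ near the bump at $x$ and to $-w$ near the bump at $-x$, hence at distance about $\sqrt2\,|w|$ from the constant field $w$ however large $|x|$ is. A correct lower bound would require constructing a near-optimal gradient test field mimicking this antisymmetric profile (or a dual estimate via the Riesz representation underlying Lemma \ref{lem:H-1vsL2}), together with quantitative uniformity (e.g.\ integrability of $1-\sqrt{R}$) to exclude cheap detours of the rescaled curves through the apex, where $R,A$ degenerate like $r^2$. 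All of this is avoidable: use the sandwich.
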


\begin{proof}
  By construction of $d_t$ and by the contractivity \eqref{eq:contractivity} with \eqref{eq:d-t-Lip} we have the chain of inequalities
  \begin{align*}
    \tilde d_t \leq d_t \leq d\;.
  \end{align*}
  From Proposition \ref{prop:convergence} we already know that $\tilde
  d_t$ pointwise converges to $d$ as $t\to0$ whence the convergence of
  $d_t$ follows. The convergence in pointed Gromov--Hausdorff topology
  follows as in the proof of Proposition \ref{prop:convergence}.
\end{proof}

As an immediate consequence of Proposition \ref{prop:warp}, we deduce
that a $d_t$-minimizing curve connecting the apex $o=(0,0)$ to the point
$(r,0)\in C(\pi)$ is given by the curve
$(sr,0)_{s\in[0,1]}$. Hence the distance of $(r,0)$ from the apex $o$
is
\begin{align}\label{eq:radius}
  d_t\big((r,0),o\big) = \int_0^1r\sqrt{R\big(sr/\sqrt{t}\big)}\dd s = \int_0^r \sqrt{R\left(s/\sqrt{t}\right)}\dd s\;.
\end{align}

\subsection{Persistence of the conic singularity for $C(\pi)$}
\label{sec:persistence}

We will show that the new distance $d_t$ has a conic singularity at
the origin of angle $\sqrt{2}\pi$ independent of $t$. In order to do
so we will compare for small $r$ the distance of a point $p_r=(r,0)$
from the origin to the length of a circle around the origin passing
through $p_r$. 

More precisely, for $r>0$ set $\rho_t(r)=d_t\big((0,r),o\big)$ and define
\begin{align*}
  l_t(r)=\int_0^\pi|\dot p^r_s|_t\dd s\;.
\end{align*}
where the curve $p^r:[0,\pi]\to C(\pi)$ is given by $p^r_s=(r,s)$.

\begin{theorem}\label{thm:singular}
   For each $t>0$ we have
  \begin{align*}
    \lim\limits_{r\to0}\frac{l_t(r)}{\rho_t(r)}=\sqrt{2}\pi\;.
  \end{align*}
  In other words, the angle at the apex $o$ is $\sqrt{2}\pi$. In
  particular, a singularity persists at $o$. With the notation of
  Proposition \ref{prop:warp} we have more precisely $R(r)\sim
  r^2/2$ and $A(r)\sim r^2/4$ as $r\to 0$.
  
  Moreover, $C(\sqrt{2}\pi)$ is both, the tangent space of $(C(\pi),d_t)$ at $o$, and the limit in the pointed Gromov--Hausdorff topology as $t$ goes to infinity.
\end{theorem}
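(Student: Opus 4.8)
The goal is to show the asymptotic angle $\sqrt 2\pi$ at the apex, the precise asymptotics $R(r)\sim r^2/2$, $A(r)\sim r^2/4$, and the two geometric conclusions (tangent cone and pointed GH limit as $t\to\infty$). By the scaling Lemma \ref{lem:scaling} it suffices to work at $t=1$ and translate. The key idea is that the ratio $l_t(r)/\rho_t(r)$ measures the angle, and by the warped form \eqref{eq:warp1} both $\rho_t(r)=\int_0^r\sqrt{R(s/\sqrt t)}\dd s$ and $l_t(r)=\pi\, r\sqrt{A(r/\sqrt t)}$ (the latter since along the circle $\dot r_s=0$, $\dot\alpha_s=1$, so the integrand is $r\sqrt{A(r/\sqrt t)}$). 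Hence the angle equals
\begin{align*}
\lim_{r\to0}\frac{\pi\, r\sqrt{A(r/\sqrt t)}}{\int_0^r\sqrt{R(s/\sqrt t)}\dd s}\;,
\end{align*}
and everything reduces to the small-$r$ behaviour of $R$ and $A$. So the heart of the proof is computing $R(r),A(r)$ as $r\to0$.

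\textbf{Computing the small-$r$ asymptotics of $R$ and $A$.}
First I would derive an explicit formula for $R$. Using the definition \eqref{def:AR} with $w=(1,0)$, the source $s=\frac{\dd}{\dd h}|_0 f_{x+hw}$ for $x=r(1,0)$ is $\partial_{x_1}f_x=\frac12(\partial_{x_1}\eta(\cdot-x)-\partial_{x_1}\eta(\cdot+x))$, and by the radial symmetry of the problem one expects the optimal $V$ to be a gradient whose flux can be integrated explicitly; this should yield the closed form referenced as \eqref{eq:R-explicit}, from which $R(r)\sim r^2/2$ as $r\to0$ follows by Taylor expansion. For $A$ (taking $w=(0,1)$, the tangential direction) no such simple closed form is available, so I would instead obtain matching upper and lower bounds. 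The natural \emph{upper} bound comes from a competitor vector field: as in the boundedness argument at the end of Proposition \ref{prop:warp}, one can build an explicit $V$ solving \eqref{eq:conteq} and estimate $\int|V|^2\dd\mu_x$. As $r\to0$ the two Gaussians $\gamma_x,\gamma_{-x}$ nearly coincide, so $\mu_x\approx\gamma_0$ and the leading behaviour of $A$ should be governed by a single centred Gaussian; I expect $\int|V|^2\dd\mu_x\sim r^2/4$. The matching \emph{lower} bound should follow from Lemma \ref{lem:H-1vsL2}, identifying $A(r)=\tilde g_{r(1,0)}((0,1),(0,1))$ with an $H^{-1}$-type norm of the source and expanding to leading order in $r$. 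The factor-of-two discrepancy between $R\sim r^2/2$ and $A\sim r^2/4$ is exactly what produces the angle $\sqrt2\pi$: substituting gives $\lim_{r\to0} l(r)/\rho(r)=\lim \pi r\sqrt{r^2/4}\big/\int_0^r\sqrt{s^2/2}\dd s=\pi\cdot(r^2/2)\big/(r^2/(2\sqrt2))=\sqrt2\pi$.

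\textbf{The tangent cone and the $t\to\infty$ limit.}
Once $R(r)\sim r^2/2$ and $A(r)\sim r^2/4$ are established, both remaining geometric statements follow by rescaling. For the tangent cone at $o$, I would rescale the metric near the apex: under the blow-up $p\mapsto \lambda p$ with $\lambda\to\infty$ the relevant values $R(r/\sqrt t),A(r/\sqrt t)$ are probed at $r\to0$, and the warped metric \eqref{eq:warp1} converges (after normalising by the leading $r^2$ factor) to $\tfrac12\dd r^2+\tfrac14 r^2\dd\alpha^2$ on $C(\pi)$, which after the change of variable $\tilde r=r/\sqrt2$ is exactly the flat cone of total angle $\sqrt2\pi$; this gives $C(\sqrt2\pi)$ as the tangent cone. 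For the limit as $t\to\infty$, I would again use Lemma \ref{lem:scaling}: $(C(\pi),d_t)$ is isometric to $(C(\pi),\sqrt t\, d_1)$ via the dilation $p\mapsto t^{-1/2}p$, so rescaling by $t^{-1/2}$ shows that on balls of fixed radius one sees the metric $g^1$ probed at radii $r/\sqrt t\to0$, i.e.\ again the limiting cone metric $\tfrac12\dd r^2+\tfrac14 r^2\dd\alpha^2$. Hence $(C(\pi),d_t)$ converges in the pointed Gromov--Hausdorff topology to $C(\sqrt2\pi)$ as $t\to\infty$. The main obstacle throughout is the tangential coefficient $A$: unlike $R$ it has no closed form, so controlling its $r\to0$ asymptotics to leading order (both the constant $1/4$ and the error estimate needed for the GH statement) via the Poincaré/Riesz argument of Lemma \ref{lem:H-1vsL2} is the delicate technical step.
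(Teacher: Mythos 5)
Your overall route coincides with the paper's: reduce to $t=1$ by Lemma \ref{lem:scaling}, use $\rho_t(r)=\int_0^r\sqrt{R(s/\sqrt t)}\,\mathrm{d}s$ and $l_t(r)=\pi r\sqrt{A(r/\sqrt t)}$, compute $R$ exactly from the explicitly solvable (essentially one-dimensional) continuity equation, and treat $A$ by an approximate solution controlled through the $r$-uniform Poincar\'e inequality and Lemma \ref{lem:H-1vsL2}; your ``competitor upper bound plus duality lower bound'' is equivalent to the paper's device of applying Lemma \ref{lem:H-1vsL2} to the difference $\nabla\psi^r-V^{(0,1)}_{x_r}$. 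But the $A$-asymptotics are the heart of the proof and you leave them schematic: the heuristic ``$\mu_x\approx\gamma_0$, so I expect $r^2/4$'' produces neither the competitor nor the constant. One must expand the source, $\beta^r=r\,\eta'(y_1)\eta'(y_2)+o(r)$, and solve the limiting equation $-\diverg(\gamma^1_0\nabla\phi)=\eta'\otimes\eta'$, whose solution is $\phi(y)=\tfrac14 y_1y_2$; the paper then takes $\psi^r=\tfrac r4 y_1y_2$, checks $r^{-2}\int|\beta^r-\delta^r|^2/f_{x_r}\to0$, and concludes $\sqrt{A(r)}=\norm{\nabla\psi^r}_{L^2(\mu^1_{x_r})}+o(r)=\tfrac r2+o(r)$. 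Beware also that the naive guess suggested by ``rotating the two centres,'' $V=\tfrac1r(-y_2,y_1)$, has $L^2(\mu^1_{x_r})$-norm of order $1/r$, so symmetry alone does not hand you a good competitor.

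Second, and this is an outright error: your identification of the tangent cone is wrong as written. Since $R(r)\sim r^2/2$ \emph{and} $A(r)\sim r^2/4$ both vanish, the metric near the apex is $g\approx r^2\bigl(\tfrac12\,\mathrm{d}r^2+\tfrac14 r^2\,\mathrm{d}\alpha^2\bigr)$, and you may not ``normalise by the leading $r^2$ factor'': the conformal factor is precisely what encodes the degeneracy. The metric $\tfrac12\,\mathrm{d}r^2+\tfrac14 r^2\,\mathrm{d}\alpha^2$ with $\alpha\in[0,\pi]$ is the flat cone of angle $\pi/\sqrt2$, not $\sqrt2\pi$: your substitution $\tilde r=r/\sqrt2$ gives $\mathrm{d}\tilde r^2+\tilde r^2\,\mathrm{d}(\alpha/\sqrt2)^2$ with $\alpha/\sqrt2\in[0,\pi/\sqrt2]$. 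The correct procedure, as in the paper, is the arclength reparametrization $\bar r=\rho(r)\sim r^2/(2\sqrt2)$, which yields $\bar R=1$ and $\bar A(\bar r)=A(r)r^2/\rho(r)^2\to2$, i.e.\ the metric $\mathrm{d}\bar r^2+2\bar r^2\,\mathrm{d}\alpha^2$ over $\alpha\in[0,\pi]$, which is $C(\sqrt2\pi)$. Your ratio computation $l/\rho\to\sqrt2\pi$ is correct exactly because it keeps the factor ($l$ and $\rho$ are both of order $r^2$, not $r$), but the tangent-cone and $t\to\infty$ steps must be run through the reparametrized coordinates (combined, for the $t\to\infty$ statement, with the identification of the blow-up at $o$ with the blow-down in $t$ via Lemma \ref{lem:scaling}, which you do have).
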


\begin{remark}\label{rem:discont}
  The discontinuity at $t=0$ of the asymptotic angle at $o$ might seem
  intriguing at first in view of the convergence of $d_t$ to the
  original distance $d$ given by Proposition
  \ref{prop:convergence1}. Note however, that the asymptotic angle is in
  a certain sense a first order quantity, while the convergence of
  distances is zero order. Intuitively, the discontinuity can be
  understood from the scaling property \eqref{eq:scaling}. After
  zooming in at scale $r$, the heat kernel measure at a very
  small time $t$ looks like the heat kernel measure at the larger time
  $t/\sqrt{r}$ at the original scale.
\end{remark}

\begin{proof}
  We will calculate $\rho_t$ and $l_t$ asymptotically as
  $r\to 0$.
  
  From Proposition \ref{prop:warp} and \eqref{eq:radius} we have
  \begin{align}\label{eq:rho1}
    \rho_t(r) &= \int_0^1r\sqrt{R\big(sr/\sqrt{t}\big)}\dd s\\\label{eq:l1}
    l_t(r) &= \pi r\sqrt{A\big(r/\sqrt{t}\big)}\;.
  \end{align}
  Thus, it remains to calculate $R$ and $A$. Denote by $f_x$ the density of
  $\mu^1_x=\frac12\gamma^1_x+\frac12\gamma^1_{-x}$. We set $x_r=(r,0)\in
  \R^2$ and recall from the proof of Proposition \ref{prop:warp} that
  \begin{align*}
    R(r) &= ||V_{x_r}^{(1,0)}||_{L^2(\mu^1_{x_r};\R^2)}^2\;,\\
    A(r) &= ||V_{x_r}^{(0,1)}||_{L^2(\mu^1_{x_r};\R^2)}^2\;,   
  \end{align*}
  where the vector field $V_{x}^{w}\in T_{\mu^1_{x}}\cP(\R^2)$ is defined
  uniquely by the continuity equation
\begin{align} \label{eq:ceAR}
    \frac{\dd}{\dd h}\big|_{h=0}f_{x+hw} ~&=~ -\diverg(f_xV_x^w)\;.
  \end{align}
  Note that
  $$f_x(y)=1/2\big(\eta(y_1-x_1)\eta(x_2-y_2)+\eta(x_1+y_1)\eta(x_2+y_2)\big)\;,$$
  where $\eta$ denotes $y\mapsto (4\pi t)^{-1/2}\exp(-y^2/4t)$, the 1-dimensional Gaussian density at time $1$. 
  Let us first concentrate on $R$. Here, we have to solve 
  \begin{align*}
    \frac{\dd}{\dd h}\big|_{h=0}f_{x_r+h(1,0)}(y) &= \eta(y_2)\frac12\big(-\eta'(y_1-r)+\eta'(y_1+r)\big)\\
                                             &= - \mathrm{div}\left(f_{x_r}V_{x_r}^{(1,0)}\right)(y)\;.
  \end{align*}
  It is easily checked that the solution is given by
  \begin{align*}
    V_{x_r}^{(1,0)}(y)=\binom{1}{0}\frac{\eta(y_1-r)-\eta(y_1+r)}{\eta(y_1-r)+\eta(y_1+r)}\;.
  \end{align*}
  which indeed belongs to $T_{\mu^1_{x_r}}\cP(\R^2)$. Thus, we have
  \begin{align}\nonumber
    R(r) &= \int |V_{x_r}^{(1,0)}|^2\dd\mu^1_{x_r}
                  = \frac12 \int\int \eta(y_2)\frac{|\eta(y_1-r)-\eta(y_1+r)|^2}{\eta(y_1-r)+\eta(y_1+r)} \dd y_1\dd y_2\\\label{eq:R-explicit}
                  &=  \frac12 \int \frac{|\eta(y_1-r)-\eta(y_1+r)|^2}{\eta(y_1-r)+\eta(y_1+r)} \dd y_1\;.
  \end{align}
  To determine the asymptotic behavior as $r\to0$, we first note that
  \begin{align}\label{eq:rho2}
    R(r)=\frac12 r^2 \int\frac{|2\eta'(y_1)|^2}{2\eta(y_1)}\dd x + o(r^2)
       = r^2\int \frac{y_1^2}{4}\eta(y_1)\dd y_1 +o(r^2) = \frac{r^2}{2}+o(r^2)\;.
  \end{align}  
  
  Let us turn to calculating $A$. First we note that the left hand  side $\beta^r$ of the continuity
  equation \eqref{eq:ceAR} is
  \begin{align*}
    \beta^r=\frac{\dd}{\dd h}\big|_{h=0}f_{x_r+h(0,1)}(y) &= \eta'(y_2)\frac12\big[-\eta(y_1-r) + \eta(y_1+r)\big]\\
                                &=r \eta'(y_1)\eta'(y_2)+o(r)\;.
  \end{align*} 
  Unfortunately, we can not explicitly solve equation \eqref{eq:ceAR} in $T_{\mu_{x_r}^1}\mathcal{P}(\R^2)$ but we can approximate the
  solution. To this end introduce the function $\psi^r:\R^2\to\R$
  given by
  $$\psi^r(y)=r\frac14 y_1y_2\;.$$
  We calculate $\delta^r(y)$ where $\delta^r:y\mapsto -\diverg\big(f_{x_r}\nabla\psi^r\big)(y)$.
  \begin{align*}
    \delta^r(y)&=~ -\frac{r}{4}\Big[y_2\eta(y_2)\frac12\big(\eta'(y_1+r)+\eta'(y_1-r)\big)\\
        &\qquad\qquad + \eta'(y_2)y_1\frac12\big(\eta(y_1+r)+\eta(y_1-r)\big)\Big]\\
	&=~\frac{r}{4}\Big[(1+1)\eta'(y_2)\big(\eta'(y_1+r)+\eta'(y_1-r)\big)\\
        &\qquad\qquad + r \eta'(y_2)\frac12\big(\eta(y_1+r)-\eta(y_1-r)\big)\Big]\\	
        &=~ \frac{r}2\eta'(y_2)\big(\eta'(y_1+r)+\eta'(y_1-r)\big)\\
        &\qquad\qquad + \frac{r^2}{8}\eta'(y_2)\big(\eta(y_1+r)-\eta(y_1-r)\big)\;,
  \end{align*}
  where in the first line we used several times the identity
  $\eta'(t)=-(t/2) \eta(t)$. Considering an expansion in
  $r$ at $r=0$ we check that as $r\to0$ we have
  \begin{align*}
    \frac1{r^2} \int \frac{|\beta^r -\delta^r|^2}{f_{x_r}} \rightarrow 0\;.
  \end{align*}
  Since the measures $\mu^1_{x_r}$ satisfy the Poincar\'e inequality with
  constant independent of $r$, we deduce by Lemma \ref{lem:H-1vsL2} (applied to $\nabla\psi^r -V_{x_r}^{(0,1)}$ and $\mu^1_{x_r}$)
  that
  \begin{align*}
    \frac{1}{r}\norm{\nabla\psi^r -V_{x_r}^{(0,1)}}_{L^2(\mu^1_{x_r};\R^2)}\rightarrow 0\;.
  \end{align*}
  It thus suffices to calculate
  \begin{align*}
    &||\nabla\psi^r||_{L^2(\mu^1_{x_r};\R^2)}\\
     &=~\frac{r}{4}\left(\int (y_1^2+y_2^2)\eta(y_2)\frac12\big(\eta(y_1+r)+\eta(y_1-r)\big)\dd y_1\dd y_2\right)^{\frac12}\\
     &=~\frac{r}{2} +o(r^2)\;.
  \end{align*}
  Thus $\sqrt{A(r)}=\frac{r}{2}+o(r)$. This together with
  \eqref{eq:rho1}, \eqref{eq:l1} yields
  \begin{align*}
    \rho_t(r) &= \frac{1}{2\sqrt{2t}}r^2 +o(r^2)\;,\\
    l_t(r) &= \pi\frac{1}{2\sqrt{t}}r^2 +o(r^2)\;.
  \end{align*}
  This gives the claim on the limit ratio.

For the last part of the statement let us consider the reparametrization $\mathcal{T}:(r,\theta)\in C(\pi)\mapsto (\rho(r),\theta)\in C(\pi)$ where $\rho$ stands for $\rho_t$ at time $1$. We note $(\bar{r},\bar{\theta})$ the new coordinates. The function $\rho$ is continuously differentiable of positive derivative so that $\mathcal{T}$ is a diffeomorphism outside the apex. A curve $(\gamma_s)_{s\in[0,T]}$ with support on $C(\pi)\setminus \{o\}$ is Lipschitz if and only if $(\mathcal{T}\circ\gamma_s)_s$ is Lipschitz too. Moreover, a change of variable shows how to compute the length on the second curve with the tensor defined by 
$$\bar{R}=1\quad\text{and}\quad\bar{A}(\bar{r})\bar{r}^2=A(\rho^{-1}(\bar{r}))\times\rho^{-1}(\bar{r})^2$$ in place of $R$ and $A$. We proved in Lemma \ref{lem:biLip} that in the minimisation problem \eqref{eq:warp} it is possible to use Lipschitz curves outside the apex, or Lipschitz rays from or to the apex. Both classes of curves are preserved by $\mathcal{T}$ and $\mathcal{T}^{-1}$. Finally similarly as in Lemma \ref{lem:biLip} the infimum of the length in the new coordinates remains the same if it is allowed to test the Lipschitz curves going through $o$. Using the equivalents of $A$ and $\rho$ we obtain $\bar{A}^{1/2}\sim_{\bar{r}\to 0}\sqrt{2}$. Note that the equation $\bar{A}=c$ would corresponds to the metric of $C(c\pi)$. With the new coordinates we easily recognize that the tangent space at zero is $C(\sqrt{2}\pi)$. Together with the time-space scaling of Lemma \ref{lem:scaling} we obtain the same limit space when $t$ goes to infinity. 
\end{proof}

\begin{remark}
  The intuition for finding a good candidate $\nabla\phi$ for the solution to
  \begin{align*}
    \beta^r = -\diverg(f_{x_r}\nabla\phi)
  \end{align*}
  is as follows. Since we are interested only in the limit $r\to0$ we
  expand the continuity equation in $r$. Note that
  $f_{x_r}(y_1,y_2)=\eta(y_1)\eta(y_2)+O(r^2)$. We expand
  $\nabla\phi=\nabla\phi_{(0)} + r\nabla\phi_{(1)} +o(r)$.
  Since $\beta^r=r\eta'(y_1)\eta'(y_2)+o(r)$ we conclude that
  $\nabla\phi_0=0$ and that we must have
  \begin{align*}
    \eta'(y_1)\eta'(y_2) &= -\diverg(\eta\otimes \eta\nabla\phi_{(1)})(y_1,y_2)\\
                           &= -\eta'(y_1)\eta(y_2)\partial_{y_1}\phi_{(1)} -\eta(y_1)\eta'(y_2)\partial_{y_2}\phi_{(1)}\\
                           &\quad -\eta(y_1)\eta(y_2)\Delta\phi_{(1)}(y_1,y_2)
  \end{align*}
 or equivalently, since $\eta'(u)=- (u/2) \eta(u)$,
 \begin{align*}
   y_1y_2/4 - (y_1/2)\partial_{y_1}\phi_{(1)}(y) - (y_2/2)\partial_{y_2}\phi_{(1)}(y) + \Delta\phi_{(1)}(y)=0\;.
 \end{align*}
 A solution to this is given by $\phi_{(1)}(y)=\frac1{4}y_1y_2$. 
\end{remark}

\begin{remark}\label{rem:angle-infinity}

  Unsurprisingly, the asymptotic angle of $\big(C(\pi),d_t\big)$ at
  infinity remains $\pi$ independently of $t$. More precisely, for any $t\geq 0$:
  \begin{align}\label{eq:angle-infinity}
    \lim\limits_{r\to\infty}\frac{l_t(r)}{\rho_t(r)} ~=~ \pi\;.
  \end{align}
  Indeed, in view of \eqref{eq:rho1}, \eqref{eq:l1} we find after a
  change of variables in the integral that
  \begin{align*}
    \frac{l_t(r)}{\rho_t(r)}~=~\frac{\pi \sqrt{A(r/\sqrt{t})}\times(r/\sqrt{t})}{\int_0^{r/\sqrt{t}}\sqrt{R(s)}\dd s}\;.
  \end{align*}
  Thus we have
  \begin{align*}
    \lim\limits_{r\to\infty}\frac{l_t(r)}{\rho_t(r)} ~=~ 
     \lim\limits_{t\to0}\frac{l_t(r)}{\rho_t(r)} ~=~\pi\;.
  \end{align*}
In the last equality we used Proposition \ref{prop:convergence1} for $\rho_t(r)\to r$ and also a representation of the length of type \eqref{partition} together with $\tilde{d}_t\leq d_t\leq d$ for $l_r(t)\to \pi r$.
\end{remark}

\subsection{The cone of angle $\pi/2$}
\label{sec:small-cone}

Like $C(\pi)$, the cone $C(\pi/2)$ admits an alternative characterization as a
quotient of $\R^2$, however, it will be convenient to phrase this in
terms of complex numbers. We have
\begin{align*}
  C(\pi/2)=\C\big/\sigma\;,
\end{align*}
where the map $\sigma: \C \to\C$ is the direct rotation by $\pi/2$,
i.e.~$\sigma(z)=iz$. Let us denote by $P:\C\to C(\pi/2)$ the
canonical projection. Then the cone distance between $p,q\in C(\pi/2)$
can be written as
\begin{align*}
  d(p,q) = \min\Big\{|z-e^{ik\pi/2}z'| : k=0,1,2,3\Big\}\;,
\end{align*}
where $z,z'\in \C$ are such that $P(z)=p,P(z')=q$. The Hausdorff
measure on $C(\pi/2)$ is given as $m=\frac14 P_\#\Leb$, where $\Leb$
denotes the Lebesgue measure on $\C$.

As in section \ref{sec:prelim-cone} we can calculate Wasserstein distances in the cone via
lifting. Given $\nu\in\cP_2(C(\pi/2))$ we denote by $L(\nu)$ the
\emph{symmetric lift} of $\nu$, i.e.~the unique measure in
\begin{align*}
  \cP_2^{\mathrm{sym}}(\C) := \{\mu\in\cP_2(\C)~:~\sigma_{\#}\mu=\mu \}\;.
\end{align*}
such that $P_\# L(\nu)=\nu$. Then, in analogy to Lemma
\ref{lem:lifting}, for any two measures
$\mu,\nu\in\cP_2(C(\pi/2))$ we obtain
\begin{align*}
  W_{C(\pi/2)}(\mu,\nu) = W_{\C}(L(\mu),L(\nu))\;.
\end{align*}
Recall that $\gamma^t_z$ denotes the two-dimensional Gaussian measure
with variance $2t$ centered at $z \in\C$. We set
$\nu^t_p=P_\#(\gamma^t_z)$, where $p=P(z)$. By the obvious analogue of
Lemma \ref{lem:evi}, $\nu^t_p$ is the heat kernel measure on
$C(\pi/2)$ in the sense of RCD spaces. Note that its lift is given by
\begin{align*}
  L(\nu^t_p) = \frac14 \Big[\gamma^t_z + \gamma^t_{iz} + \gamma^t_{-z} + \gamma^t_{-iz}\Big]\;.
\end{align*}
Let $d_t$ be the length
distance associated to $\tilde
d_t(p,q)=W_{C(\pi/2)}(\nu^t_p,\nu^t_q)$. It is found again to satisfy
the scaling relation \eqref{eq:scaling}. Arguing exactly as in
Proposition \ref{prop:warp} and Proposition \ref{prop:convergence1} we
obtain

\begin{proposition}\label{prop:warp2}
  The distance $d_t$ is induced by a metric tensor $g^t$ on the open
  manifold $C(\pi/2)\setminus\{o\}$ which is of warped product
  form
  \begin{align}\label{eq:warp2}
    g^t_{(r,\alpha)}(\cdot,\cdot) = R(r/\sqrt{t})\mathrm{d r}^2 + r^2A(r/\sqrt{t})\mathrm{d}\alpha ^2\;,
  \end{align}
  where $R,A:(0,\infty)\to (0,1]$ are bounded functions.
\end{proposition}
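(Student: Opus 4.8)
The plan is to mirror the proof of Proposition \ref{prop:warp} line by line, the only new feature being that the symmetric lift is now a mixture of four Gaussians rather than two. Since the scaling relation \eqref{eq:scaling} persists, it suffices to treat $t=1$. For $p=P(z)$ write $\mu_z=L(\nu^1_p)=\frac14\big(\gamma_z+\gamma_{iz}+\gamma_{-z}+\gamma_{-iz}\big)$ and let $f_z$ be its strictly positive smooth density; being a finite mixture of Gaussians of equal covariance, $\mu_z$ satisfies a Poincaré inequality, so that Lemma \ref{lem:H-1vsL2} applies. Exactly as before I would define a tensor $\tilde g$ on $\C=\R^2$ by $\tilde g_z(v,w)=\int\langle V^v_z,V^w_z\rangle\dd\mu_z$, where $V^w_z\in T_{\mu_z}\cP(\R^2)$ is the unique solution of $\frac{\dd}{\dd h}\big|_{h=0}f_{z+hw}=-\diverg(\mu_z V^w_z)$ furnished by Lemma \ref{lem:H-1vsL2}; uniqueness makes $w\mapsto V^w_z$ linear, so $\tilde g$ is a genuine tensor, which I then transport to $C(\pi/2)\setminus\{o\}$ via $x=r\,e^{i\alpha}$ to obtain $g$.

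The warped product form \eqref{eq:warp2} is then forced by the symmetries of the four-centre configuration $\{z,iz,-z,-iz\}$, the vertices of a square centred at the origin. Writing $\mathcal{R}_\phi$ for the rotation by $\phi$, the identity $(\mathcal{R}_\phi)_\#\mu_z=\mu_{e^{i\phi}z}$ propagates through the continuity equation to give $\tilde g_{e^{i\phi}z}(\mathcal{R}_\phi w,\mathcal{R}_\phi w)=\tilde g_z(w,w)$; this rotational covariance makes the radial and angular coefficients functions of $r$ alone. For $z=r(1,0)$ the reflection $\kappa$ at the real axis fixes $z$ and preserves the square, hence preserves $\mu_z$, and the same push-forward argument yields $\tilde g_z(\kappa w,\kappa w)=\tilde g_z(w,w)$, which kills the cross term and gives $g_p\big((v,\theta),(v,\theta)\big)=g_p\big((v,-\theta),(v,-\theta)\big)$. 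Together these produce \eqref{eq:warp2} with
$$R(r)=\tilde g_{r(1,0)}\big((1,0),(1,0)\big),\qquad A(r)=\tilde g_{r(1,0)}\big((0,1),(0,1)\big).$$

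For the bounds $0<R,A\leq1$ I would reuse the competitor from Proposition \ref{prop:warp}: perturbing $z$ in direction $w$ moves the four centres in directions $w,iw,-w,-iw$, so the field $V(y)=\sum_j\lambda_j(y)\,w_j$, with $w_j\in\{w,iw,-w,-iw\}$ and $\lambda_j(y)=\eta(y-c_j)/\sum_k\eta(y-c_k)$ the relative weight of the $j$-th Gaussian, solves the same continuity equation. As $V(y)$ is a convex combination of four vectors each of norm $|w|$, we have $|V(y)|\leq|w|$ pointwise, hence $\norm{V}_{L^2(\mu_z)}\leq|w|$; the orthogonal projection onto $T_{\mu_z}\cP(\R^2)$ contracts the norm and, by uniqueness, equals $V^w_z$, giving $\tilde g_z(w,w)\leq|w|^2$ and thus $R,A\leq1$. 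Positivity on $(0,\infty)$ holds since $\tilde g_z(w,w)=0$ would force $\frac{\dd}{\dd h}\big|_{h=0}f_{z+hw}=0$, impossible for $w\neq0$, $z\neq0$. Finally, the identification of $d_1$ with the length functional of $g$ goes through verbatim, using the uniqueness clause of Lemma \ref{lem:H-1vsL2} to read off $|\dot p_s|_1=\sqrt{\tilde g_{x_s}(w_s,w_s)}$ along lifted Lipschitz curves, together with the analogue of Lemma \ref{lem:biLip-cone} that lets one restrict to $d$-Lipschitz curves. The main obstacle I anticipate is precisely this last ingredient: the bi-Lipschitz lower bound of Lemma \ref{lem:biLip} must be re-derived with four centres, where projecting onto a suitable line $A$ now yields a symmetric one-dimensional mixture $\frac14(\gamma_a+\gamma_b+\gamma_{-a}+\gamma_{-b})$ rather than a single folded Gaussian, so one must choose $A$ avoiding the symmetry axes and verify that its mean-separation still controls $|z-z'|$ from below uniformly on $C(\pi/2)\setminus B_r$.
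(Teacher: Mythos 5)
Your proposal is correct and takes essentially the same approach as the paper: the paper's own proof of Proposition \ref{prop:warp2} consists precisely of the assertion that one argues exactly as in Proposition \ref{prop:warp}, and your four-centre adaptations (rotational covariance plus the reflection fixing $r(1,0)$ for the warped form, the convex-combination competitor field for $R,A\leq 1$, and the uniqueness clause of Lemma \ref{lem:H-1vsL2} to identify $d_1$ with the length functional) are exactly the required modifications, spelled out in more detail than the paper gives. The one point you flag---re-deriving Lemma \ref{lem:biLip} with the projected one-dimensional four-Gaussian mixture---is indeed the only genuinely new ingredient, and it goes through as you suggest by first normalizing $z,z'$ into a fundamental domain of the $\pi/2$-symmetry and choosing the projection line depending on the pair, just as the paper chooses the angle $3\pi/8$ after normalizing in the two-centre case.
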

Of course, the precise form of the functions $R$ and $A$ is different
for $C(\pi/2)$ and $C(\pi)$.

\begin{proposition}\label{prop:convergence2}
  As $t$ goes to zero, the metric space $(C(\pi/2),d_t)$ tends to
  $(C(\pi/2),d)$ pointwise and in the pointed Gromov--Hausdorff
  topology.
\end{proposition}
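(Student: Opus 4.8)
The plan is to mirror exactly the proof of Proposition \ref{prop:convergence1}, since the statement for $C(\pi/2)$ is the direct analogue of the one for $C(\pi)$ and the structural ingredients are the same. The key observation is that the entire chain of inequalities used in the $C(\pi)$ case carries over verbatim: we have a contractivity estimate for the heat flow (the analogue of \eqref{eq:contractivity}), which together with the general length-distance inequality \eqref{eq:d-t-Lip} yields $\tilde d_t \leq d_t \leq d$ on $C(\pi/2)$.

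First I would verify the contractivity bound $W_{C(\pi/2)}(\nu^t_p,\nu^t_q)\leq d(p,q)$. This follows just as in the $C(\pi)$ case: using the lifting isometry $W_{C(\pi/2)}(\mu,\nu)=W_\C(L(\mu),L(\nu))$ stated in Section \ref{sec:small-cone}, we choose representatives $z,z'\in\C$ realizing $d(p,q)=|z-z'|$, write the lift $L(\nu^t_p)$ as the uniform average of four rotated Gaussians $\tfrac14[\gamma^t_z+\gamma^t_{iz}+\gamma^t_{-z}+\gamma^t_{-iz}]$, and apply convexity of the squared Wasserstein distance to the coupling obtained by transporting each Gaussian $\gamma^t_{i^kz}$ to $\gamma^t_{i^kz'}$ by translation. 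This produces $W_\C(L(\nu^t_p),L(\nu^t_q))\leq W_\C(\gamma^t_z,\gamma^t_{z'})=|z-z'|=d(p,q)$.

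Given this, the proof is immediate. From Proposition \ref{prop:convergence} (which applies because Assumption \ref{ass:iota} holds for $\iota_t(p)=\nu^t_p$ on $C(\pi/2)$, again by the contractivity just established) we already know that $\tilde d_t(p,q)\to d(p,q)$ pointwise as $t\to0$. Sandwiching via $\tilde d_t\leq d_t\leq d$ then forces $d_t(p,q)\to d(p,q)$ for all $p,q$. The pointed Gromov--Hausdorff convergence follows exactly as in the proof of Proposition \ref{prop:convergence}: the bound \eqref{eq:i-Lip} gives equicontinuity of the family $\tilde d_t(\cdot,\cdot)$ on compact sets, hence uniform convergence on compacta, which upgrades pointwise convergence to the pointed Gromov--Hausdorff statement.

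I do not anticipate a genuine obstacle here, since the argument is a transcription of the $C(\pi)$ case with four rotated copies replacing two. The only point requiring a moment of care is making sure the lifting isometry and the warped-product representation of Proposition \ref{prop:warp2} are invoked correctly so that the inequality $d_t\leq d$ is justified—this uses that $(C(\pi/2),d)$ is itself a length space, so that the infimum defining $d_t$ over $\tilde d_t$-absolutely continuous curves is bounded above by the $d$-length of any competitor curve, as already explained in the derivation of \eqref{eq:d-t-Lip}. Because of this, the proof can simply be stated as ``identical to the proof of Proposition \ref{prop:convergence1}.''
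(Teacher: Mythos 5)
Your proposal is correct and matches the paper exactly: the paper itself proves Proposition \ref{prop:convergence2} by noting the argument is identical to that of Proposition \ref{prop:convergence1}, i.e.\ the sandwich $\tilde d_t\leq d_t\leq d$ (from the four-Gaussian analogue of the contractivity estimate \eqref{eq:contractivity} together with \eqref{eq:d-t-Lip}) combined with the pointwise convergence and equicontinuity arguments of Proposition \ref{prop:convergence}. Your verification of contractivity via the lifting to $\C$ and convexity of the squared Wasserstein distance applied to the four rotated copies is precisely the intended adaptation, so nothing is missing.
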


In order to calculate the angle at the apex, for $r>0$ let us set
again $\rho_t(r)=d_t\big(o,(0,r)\big)$, where $o$ denotes the apex,
as well as
\begin{align*}
  l_t(r)=\int_0^{\pi/2}|\dot p^r_s|_t\dd s\;.
\end{align*}
where the curve $p^r:[0,\pi/2]\to
C(\pi/2)=\big([0,\infty)\times[0,\pi/2]\big)\big/\sim$ is given by
$p^r_s=(r,s)$.

\begin{theorem}\label{thm:singular2}
   For each $t>0$ we have
  \begin{align*}
    \lim\limits_{r\to0}\frac{l_t(r)}{\rho_t(r)}=0\;.
  \end{align*}
  In other words, the angle at the apex $o$ is zero. We have more precisely $R(r)\sim
  r^2/4$ and $A(r)\in O(r^3)$ as $r\to 0$.
  
  Moreover, $\R^+$ is both, the tangent space of $(C(\pi/2),d_t)$ at $o$, and the limit in the pointed Gromov--Hausdorff topology as $t$ goes to infinity.
\end{theorem}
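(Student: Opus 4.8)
The plan is to follow the proof of Theorem \ref{thm:singular} essentially line by line; the only genuinely new input is the behaviour near $0$ of the radial and angular coefficients $R,A$ of Proposition \ref{prop:warp2}, which must now be extracted from the \emph{four-fold} symmetric lift $L(\nu^1_p)=\frac14\big[\gamma^1_z+\gamma^1_{iz}+\gamma^1_{-z}+\gamma^1_{-iz}\big]$. As in \eqref{eq:rho1}--\eqref{eq:l1}, but with angular range $[0,\pi/2]$, one has $\rho_t(r)=\int_0^r\sqrt{R(s/\sqrt t)}\dd s$ and $l_t(r)=\frac{\pi}{2}r\sqrt{A(r/\sqrt t)}$, and by the scaling Lemma \ref{lem:scaling} it suffices to expand $R,A$ at $t=1$. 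Writing $x_r=(r,0)$, the relevant density is $f_{x_r}(y)=\frac14\sum_k\eta(y-rc_k)$ with $c_k\in\{1,i,-1,-i\}$ and $\eta$ the two-dimensional Gaussian density at time $1$, and $R(r)=\norm{V^{(1,0)}_{x_r}}^2$, $A(r)=\norm{V^{(0,1)}_{x_r}}^2$ in $L^2(\mu^1_{x_r};\R^2)$, where $V$ solves the continuity equation \eqref{eq:ceAR}.

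For $R$ I would first record the elementary but crucial identities $\sum_k c_k=0$ and $\sum_k(c_k\cdot\nabla)^2=2\Delta$, so that $f_{x_r}=\eta+\frac{r^2}{4}\Delta\eta+O(r^4)$ is rotationally symmetric to leading order; the radial perturbation is the infinitesimal dilation of the configuration, whence $\beta^r_R=\frac{\dd}{\dd r}f_{x_r}=\frac r2\Delta\eta+O(r^3)$. Exactly as in Theorem \ref{thm:singular} I would introduce the approximate potential $\psi^r_R(y)=\frac r8|y|^2$, check that $-\diverg(f_{x_r}\nabla\psi^r_R)$ matches $\beta^r_R$ up to an error which is $o(r)$ in the $\norm{\,\cdot/\sqrt{f_{x_r}}}_{L^2}$-norm, and apply Lemma \ref{lem:H-1vsL2} with the Poincaré constant of $\mu^1_{x_r}$ (uniform in $r$, since these measures converge to $\eta$) to get $\norm{V^{(1,0)}_{x_r}-\nabla\psi^r_R}=o(r)$. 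As $\norm{\nabla\psi^r_R}^2=\frac{r^2}{16}\int|y|^2\eta(y)\dd y=\frac{r^2}{4}$, this yields $R(r)\sim r^2/4$.

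The heart of the matter is $A$, and this is where the angle $\pi/2$ differs qualitatively from $\pi$. The angular perturbation rotates the whole configuration, so $\beta^r_A=\frac14\sum_k(-ic_k)\cdot\nabla\eta(\,\cdot-rc_k)$; expanding in $r$ produces contractions of the symmetric derivatives of $\eta$ against the tensors $\sum_k(ic_k)_a(c_k)_{b_1}\cdots(c_k)_{b_j}$. The main obstacle, and the step I expect to demand the most care, is to verify the cancellations forced by the four-fold symmetry: the order-$r$ tensor is antisymmetric ($\propto\epsilon_{ab}$) and dies against the symmetric Hessian, while the order-$r^2$ tensor vanishes component by component, so that the first surviving contribution is of order $r^3$, explicitly $\beta^r_A=\frac{r^3}{12}\,\partial_1\partial_2(\partial_1^2-\partial_2^2)\eta+O(r^5)$. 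Inserting this into the bound $A(r)=\norm{V^{(0,1)}_{x_r}}^2\le C\int(\beta^r_A)^2/f_{x_r}$ from Lemma \ref{lem:H-1vsL2} gives $A(r)\in O(r^3)$ (indeed $O(r^6)$, but the crude bound suffices). Hence $\rho_t(r)\sim\frac{1}{4\sqrt t}r^2$ whereas $l_t(r)\in O(r^{5/2})$, so $l_t(r)/\rho_t(r)\to0$ as $r\to0$ and the angle at the apex is zero.

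For the final assertion I would argue as at the end of Theorem \ref{thm:singular}: reparametrise by $\mathcal T:(r,\alpha)\mapsto(\rho(r),\alpha)$ with $\rho=\rho_1$, a diffeomorphism off the apex that normalises the radial coefficient to $\bar R\equiv1$ while $\bar A(\bar r)\,\bar r^2=A(\rho^{-1}(\bar r))\,\rho^{-1}(\bar r)^2$. Since $\rho^{-1}(\bar r)\sim2\sqrt{\bar r}$ and $A\in O(r^3)$, this forces $\bar A(\bar r)\to0$ as $\bar r\to0$, so the angular factor collapses and the blow-up of $(C(\pi/2),d_t)$ at $o$ is the cone of angle $0$, i.e.~the ray $\R^+$. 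Finally, the space-time scaling \eqref{eq:scaling} exhibits $(C(\pi/2),d_t)$ as a dilation of $(C(\pi/2),d_1)$, so that the pointed Gromov--Hausdorff limit as $t\to\infty$ coincides with this tangent cone, namely $\R^+$.
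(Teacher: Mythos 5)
Your proposal is correct, and its overall architecture is the one the paper uses: reduce to $t=1$ by scaling, express $\rho_t$ and $l_t$ through $R$ and $A$ via the warped form of Proposition \ref{prop:warp2}, extract the asymptotics of $R,A$ from the continuity equation with Lemma \ref{lem:H-1vsL2}, and finish with the reparametrization $\mathcal{T}$ and space--time scaling for the tangent cone and the $t\to\infty$ limit. You deviate in two local but genuine ways. For $R$, the paper rotates by $\pi/4$ so that the four-fold lift becomes a \emph{product} of two one-dimensional Gaussian mixtures, which allows it to solve the continuity equation exactly, $V_r(z)=\bigl(\phi_r(x_1),\phi_r(x_2)\bigr)$, and compute $R(r)\sim r^2/4$ from a closed formula; you instead stay at $x_r=(r,0)$ and use the approximate dilation potential $\psi^r_R=\frac r8|y|^2$ together with the error bound of Lemma \ref{lem:H-1vsL2}. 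Your matching is in fact exact at leading order (with $\eta$ the time-$1$ Gaussian one has $-\diverg(\eta\nabla\psi^r_R)=\frac r2\Delta\eta$ precisely, the error being $O(r^3)$ from the $\frac{r^2}{4}\Delta\eta$ correction in $f_{x_r}$), so this is sound, at the cost of one more error estimate the paper avoids. For $A$, the paper constructs the approximating potential $\psi^r(y)=\frac{r^3}{96}[y_1^3y_2-y_1y_2^3]$ and obtains the sharp asymptotic $\sqrt{A(r)}=Cr^3+o(r^3)$, i.e.\ $A(r)\asymp r^6$; you observe that the one-sided bound \eqref{eq:H-1vsL2-heis}-type inequality $A(r)\le C\int(\beta^r_A)^2/f_{x_r}=O(r^6)$ already suffices for everything the theorem claims ($A\in O(r^3)$, the vanishing ratio, and $\bar A\to0$ after reparametrization), which shortens the proof since no potential is needed where only an upper bound is used. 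Your symmetry analysis of the angular source is correct and matches the paper's computation: the order-$r$ tensor $\sum_k(ic_k)_a(c_k)_b$ is antisymmetric, the order-$r^2$ tensor vanishes, and the surviving term $\frac{r^3}{12}\partial_1\partial_2(\partial_1^2-\partial_2^2)\eta$ is exactly the paper's $\frac{r^3}{12}[\eta'''(x_1)\eta'(x_2)-\eta'(x_1)\eta'''(x_2)]$. As a side remark, your sharp bound $A=O(r^6)$ (hence $l_1(r)=O(r^4)$) agrees with the paper's own asymptotic $\sqrt A\sim Cr^3$, and your cruder $l_t(r)\in O(r^{5/2})$ is already enough; either way the conclusion $l_t/\rho_t\to0$ and the identification of the tangent cone and $t\to\infty$ limit as $\R^+$ follow exactly as in the paper.
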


\begin{proof}
  We will follow the same reasoning as in the proof of Theorem
  \ref{thm:singular}. Let us highlight the main steps. By scaling, we
  can again assume that $t=1$. Let us denote by $\mu^1_z=\frac14
  \big[\gamma^1_z +\gamma^1_{iz}+\gamma^1_{-z}+\gamma^1_{-iz}\big]$
  the lift of $\nu^1_{P(z)}$ and denote by $f_z$ its density with
  respect to the Lebesgue measure. Recalling the expressions
  \eqref{eq:rho1}, \eqref{eq:l1} (the latter with $\pi$ replaced by
  $\pi/2$) for $\rho$ and $l$, it is sufficient to calculate $R(r)$
  and $A(r)$ asymptotically as $r\to0$.  A rotation of $\pi/4$ permits
  us to see the measures $\mu_z^t$ as product measures. This allows us
  to calculate $R$ exactly in a similar way as in Theorem
  \ref{thm:singular}. Thus, we set $z_r=re^{i\pi/4}$ and recall that
  $R(r)=\norm{V_r}^2_{L^2(\mu^1_{z_r};\R^2)}$, where $V_r$ is the
  unique vector field in $T_{\mu^1_{z_r}}\cP(\R^2)$ solving the continuity
  equation
  \begin{align}\label{eq:ceR-small}
    \frac{\dd}{\dd h}\Big|_{h=0}f_{z_r+he^{i\pi/4}} +\diverg(f_{z_r}V_r) = 0\;.
  \end{align}
  Using the explicit expression
  \begin{align*}
    f_{z_r}(z)= \frac14 \Big[\eta(x_1-r/\sqrt{2}) + \eta(x_1+r/\sqrt{2})\Big]\Big[\eta(x_2-r/\sqrt{2}) + \eta(x_2+r/\sqrt{2})\Big]\;,
  \end{align*}
  we readily check that  
  \begin{align*}
    &\frac{\dd}{\dd h}\Big|_{h=0}f_{z_r+he^{i\pi/4}} =\\
  &\frac1{4\sqrt2} \Big[-\eta'(x_1-r/\sqrt{2}) + \eta'(x_1+r/\sqrt{2})\Big]\Big[\eta(x_2-r/\sqrt{2}) + \eta(x_2+r/\sqrt{2})\Big]\\
  +&\frac1{4\sqrt2} \Big[\eta(x_1-r/\sqrt{2}) + \eta(x_1+r/\sqrt{2})\Big]\Big[-\eta'(x_2-r/\sqrt{2}) + \eta'(x_2+r/\sqrt{2})\Big]\;,
  \end{align*}
  and that the solution to \eqref{eq:ceR-small} is given by
  \begin{align*}
    V_r(z)=\binom{\phi_r(x_1)}{\phi_r(x_2)}\;,\qquad \phi_r(x)=\frac1{\sqrt{2}}\frac{\eta(x+r/\sqrt{2})-\eta(x-r/\sqrt{2})}{\eta(x+r/\sqrt{2})+\eta(x-r/\sqrt{2})}\;.
  \end{align*}
  Thus, we find
  \begin{align*}
    R(r)&=\frac12\int_{\R^2} |V_r|^2f_{z_r} = \frac18 \int\frac{|\eta(x+r/\sqrt{2})-\eta(x-r/\sqrt{2})|^2}{\eta(x+r/\sqrt{2})+\eta(x-r/\sqrt{2})}\dd x\\
     &=\frac{r^2}{4} + o(r^2)\;.
  \end{align*}
  Let us turn to calculating $A$. Here, it is convenient to set
  $z_r=(r,0)$ and recall that
  $A(r)=\norm{V_r}^2_{L^2(\mu^1_{z_r};\R^2)}$, where $V_r$ is the
  unique solution in $T_{\mu^1_{z_r}}\cP(\R^2)$ to the continuity equation
  \begin{align}\label{eq:ceA-small}
    \frac{\dd}{\dd h}\Big|_{h=0}f_{z_r+(0,h)} +\diverg(f_{z_0}V_r) = 0\;.
  \end{align}
  We will again approximate the solution. First note that
  \begin{align*}
    \beta_r&= \frac{\dd}{\dd h}\Big|_{h=0}f_{z_r+(0,h)}(z)\\
&= \frac14\Big[\eta'(x_1)\Big(\eta(x_2-r)-\eta(x_2+r)\Big) - \eta'(x_2)\Big(\eta(x_1-r)-\eta(x_1+r)\Big)\Big]\\
&= \frac{r^3}{12}\Big[\eta'''(x_1)\eta'(x_2) - \eta'(x_1)\eta'''(x_2)\Big] + o(r^3) \\
&=\frac{r^3}{48}(x_1^3x_2-x_1x_2^3)\eta(x_1)\eta(x_2) + o(r^3) \;,
  \end{align*}
  where in the last equality we have used that $\eta'(u)=-(u/2)\eta(u)$
  and $\eta'''(u)=(-(u^3/4)+(5u/4))\eta(u)$. Now, set
  $\psi^r(y)=\frac{r^3}{96}[y_1^3y_2-y_1y_2^3]$ and calculate
  \begin{align*}
    \delta_r =& -\diverg(f_{z_r}\nabla\psi^r) = -\partial_1f_{z_r}\partial_1\psi^r - \partial_2f_{z_r}\partial_2\psi^r \\
            =& \frac{r^3}{96}(3x_1^2x_2-x_2^3)\frac14\Big[\eta'(x_1)\big(\eta(x_2-r) + \eta(x_2+r)\big)\\
            &\qquad\qquad\qquad\qquad\qquad+ \eta(x_2)\big(\eta'(x_1-r) + \eta'(x_1+r)\big)\Big]\\
&- \frac{r^3}{96}(3x_2^2x_1-x_1^3)\frac14\Big[\eta(x_1)\big(\eta'(x_2-r) + \eta'(x_2+r)\big)\\
            &\qquad\qquad\qquad\qquad\qquad+ \eta'(x_2)\big(\eta(x_1-r) + \eta(x_1+r)\big)\Big]\\
           =&\frac{r^3}{96}\Big[(3x_1^2x_2-x_2^3)\eta'(x_1)\eta(x_2) - (3x_2^2x_1-x_1^3)\eta(x_1)\eta'(x_2)\Big] +o(r^3)\\
           =&\frac{r^3}{48}(x_1^3x_2-x_1x_2^3)\eta(x_1)\eta(x_2) +o(r^3)\;.
 \end{align*}
 Hence, as $r\to0$ we obtain that
 \begin{align*}
   \frac1{r^6} \int \frac{|\beta^r -\delta^r|^2}{f_{z_r}} \rightarrow
   0\;.
 \end{align*}
 Since the measures $\mu^1_{z_r}$ satisfy the Poincar\'e inequality
 with constant independent of $r$, we deduce by Lemma
 \ref{lem:H-1vsL2} that
 \begin{align*}
   \frac{1}{r^3}\norm{\nabla\psi^r
   -V_{r}}_{L^2(\mu^1_{z_r};\R^2)}\rightarrow 0\;.
 \end{align*}
 This yields that
 $\sqrt{A(r)}=\norm{\nabla\psi^r}_{L^2(\mu^1_{z_r})}+o(r^3)=Cr^3
 +o(r^3)$ for a suitable constant $C$. Using finally \eqref{eq:rho1},
 \eqref{eq:l1} we find that $\rho_1(r)$ is of order $r^2$, while
 $l_1(r)$ is of order $r^3$. This yields the claim on the ratio.

In analogy with the end of the proof of Theorem \ref{thm:singular} concerning the transformation $\mathcal{T}$, and with the notation adapted from it we find $\bar{R}=1$ and $\bar{A}=o(1)$ when $\bar r$ goes to zero. One recognizes that the tangent cone is $\R^+$ and, using the space-time scaling similarly to Lemma \ref{lem:scaling} one sees that $\R^+$ is also the pointed Gromov--Hausdorff limit when $t$ goes to infinity.
\end{proof}

\section{Smoothing the Heisenberg group}
\label{sec:subriem}

\subsection{Heisenberg group}
Most of the considerations in this section can be generalized to the
higher-dimensional Heisenberg groups, but for simplicity we consider only the first Heisenberg group $\H$. This Lie group can be represented by
$\H=\C\times\R$ with the multiplicative structure
\begin{align*}
  (z,u)\cdot(z',u')=\left(z+z',u+u'-\frac12\mathrm{Im}(z\bar{z'})\right)\;,
\end{align*}
where $\mathrm{Im}$ is the imaginary part of a complex number. 

A basis for the Lie algebra is given by the left invariant vector
fields
\begin{align*}
  \vx=\partial_x-\frac{y}{2}\partial_u\;,\quad \vy=\partial_y+\frac{x}{2}\partial_u\;,\quad \vu=\partial_u\;,
\end{align*}
and the relation $\bra{\vx,\vy}=\vu$. 
We will also consider the right invariant vector
fields
\begin{align*}
  \hat\vx=\partial_x+\frac{y}{2}\partial_u\;,\quad \hat\vy=\partial_y-\frac{x}{2}\partial_u\;,\quad \hat\vu=\vu\;.
\end{align*}
The Haar measure associated with the group structure is up to a
constant multiple the 3-dimensional Lebesgue measure, denoted by
$\cL$, it is both left- and right-invariant.

\subsection{Riemannian and sub-Riemannian distances}

The Heisenberg group carries a sub-Riemannian structure given by the pseudo-norm
\begin{align*}
\norm{a\vx+b\vy+c\vu}^2_{\CC}=
\begin{cases}
 a^2+b^2&\text{if $c=0$,}\\
+\infty&\text{otherwise.}
\end{cases}
\end{align*}
The Carnot--Carath\'eodory distance $\dcc$ is obtained by minimizing
the sub-Riemannian length of curves connecting two points. More
precisely, given $p,q\in\H$ we have
\begin{align*}
  \dcc(p,q)=\inf\int_0^T\norm{\dot\gamma_s}_{\CC}\dd s\;,
\end{align*}
where the infimum is taken e.g.~over all absolutely continuous curves
$(\gamma_s)_{s\in[0,T]}$ with respect to the Euclidean distance such
that $\gamma_0=p,\gamma_T=q$. Note that the sub-Riemannian length of
$\gamma$ is only finite if $\gamma$ is horizontal, i.e.~ for a.e.~$s$
the tangent vector $\dot\gamma_s$ is contained in the horizontal
sub-bundle 
$$\mathrm{T}\H=\Vect(\vx,\vy)\;.$$
As a consequence of the so-called H\"ormander
condition
, namely that the horizontal vector fields generate the full tangent
space, the distance $\dcc$ is finite: any two points of $\H_n$ can be
connected by a horizontal curve of finite length and even a minimizing curve can be found.
Note that a curve is absolutely continuous with
respect to the Carnot--Carath\'eodory distance if and only if it is
absolutely continuous with respect to the Euclidean distance, its
tangent vector is horizontal at almost every point and its
sub-Riemannian length is finite.

The $3$-dimensional Lebesgue measure $\cL$ coincides with the
$4$-dimension Hausdorff measure of the metric space $(\H,\dcc)$.  It
has been shown in \cite{Jui09} that the metric measure space
$(\H,\dcc,\cL)$ does not satisfy the curvature-dimension condition
CD$(K,N)$ for any $K,N$.

However, the sub-Riemannian pseudo-norm is naturally approximated by a family
of Riemannian metrics indexed by $\eps>0$ and defined via
\begin{align*}
\norm{a\vx+b\vy+c\vu}^2_{\Rm(\eps)}=a^2+b^2+(c/\eps)^2\;.
\end{align*}
We denote the associated Riemannian distance by $d_\Rm(\eps)$. The
associated Riemannian volume coincides with $\cL$ up to a constant.
One can check that the best lower bound on the Ricci curvature of
$\norm{\cdot}_{\Rm(\eps)}$ is $-\frac12 \eps^{-2}$, see
e.g.~\cite{BBBC}. We have the following comparison of $d_{cc}$ and
$d_\Rm$, that stand for $d_{\Rm(1)}$.
\begin{proposition}[{\cite[Lemma1.1]{Ju_grad}}]\label{compare}
  We have 
  \begin{align*}
    d_\Rm\leq \dcc\leq d_\Rm+4\pi\;.
  \end{align*}
  Moreover, there are positive constants $c$ and $C$ such that for any
  point $p=(z,u)\in\H=\C\times\R$:
  \begin{align*}
    \max(|z|,c(|z|+|u|^{1/2}))\leq \dcc(0_\H,p)\leq C(|z|+|u|^{1/2})\;.
  \end{align*}
\end{proposition}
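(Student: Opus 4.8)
The plan is to reduce both statements to planar geometry via the projection $\proj\colon\H=\C\times\R\to\C$, $(z,u)\mapsto z$, and the \emph{swept area} $\cS(\eta)=\frac12\int_\eta(x\,\mathrm dy-y\,\mathrm dx)$ of a planar curve $\eta$. A curve $\gamma$ in $\H$ is horizontal precisely when $\dot u=\frac12(x\dot y-y\dot x)$, so the vertical increment of a horizontal lift of $\eta$ equals $\cS(\eta)$, and moreover for horizontal $\gamma$ one has $\norm{\dot\gamma}_{\CC}=|\tfrac{\mathrm d}{\mathrm ds}(\proj\circ\gamma)|$. Since both $\dcc$ and $d_\Rm$ are left-invariant, it suffices to compare distances to the apex $0_\H$. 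The lower bound $d_\Rm\le\dcc$ is then immediate: every horizontal curve satisfies $\norm{\dot\gamma}_{\Rm(1)}=\norm{\dot\gamma}_{\CC}$ (the vertical component vanishes), so it is an admissible competitor for $d_\Rm$ of the same length, while $\dcc$ is the infimum over the \emph{smaller} class of horizontal curves.

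For the upper bound $\dcc\le d_\Rm+4\pi$ I would start from a near-minimizing curve $\gamma$ for $d_\Rm(p,q)$ and project it. Writing $\dot\gamma=a\vx+b\vy+c\vu$, the projected curve $\eta_0=\proj\circ\gamma$ has Euclidean length $\ell=\int\sqrt{a^2+b^2}\,\mathrm ds$, while its horizontal lift from $p$ reaches a point lying over $\proj(q)$ but missing the correct vertical coordinate by a defect $\delta=|u_q-u_p-\cS(\eta_0)|=|\int c\,\mathrm ds|$. The Minkowski integral inequality applied to the vector $(\sqrt{a^2+b^2},\,|c|)$ gives
\begin{align*}
  L_{\Rm(1)}(\gamma)=\int\sqrt{(a^2+b^2)+c^2}\,\mathrm ds\ \ge\ \sqrt{\ell^2+\delta^2}\;,
\end{align*}
so that $\sqrt{\ell^2+\delta^2}\le d_\Rm(p,q)+\eps$. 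It therefore suffices to turn $\eta_0$ into a planar curve $\eta$ with the same endpoints and $\cS(\eta)=\cS(\eta_0)\pm\delta$, at extra length at most $\sqrt{\ell^2+\delta^2}-\ell+4\pi$: lifting $\eta$ then yields $\dcc(p,q)\le\ell+(\sqrt{\ell^2+\delta^2}-\ell+4\pi)\le d_\Rm(p,q)+4\pi+\eps$.

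The heart of the matter, and the step I expect to be the main obstacle, is this purely planar \emph{area-correction} estimate, uniform in $\ell$ and $\delta$. When the defect is moderate one simply appends a round loop enclosing area $\delta$, at cost $2\sqrt{\pi\delta}$; a short computation shows $2\sqrt{\pi\delta}\le\sqrt{\ell^2+\delta^2}-\ell+4\pi$ whenever $\delta\le 4\pi$ or $\delta\ge\ell$. The delicate regime is $\delta>4\pi$ together with $\ell\gg\delta$, where a single loop is too expensive; there one instead distributes the required area along the long curve $\eta_0$ by a small transverse bowing of amplitude $\sim\delta/\ell$, whose extra length is of order $\delta^2/\ell$ and hence fits inside $\sqrt{\ell^2+\delta^2}-\ell\sim\delta^2/(2\ell)$. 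Isolating the precise threshold between the two gadgets so that the constant comes out exactly $4\pi$ is the only genuinely computational point.

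Finally, the two-sided estimate from $0_\H$ follows from the same two ingredients. For the lower bound, a horizontal curve to $p=(z,u)$ of length $L=\dcc(0_\H,p)$ projects to a curve $\eta$ starting at the origin of length $L$, whence $|z|\le L$; moreover $|u|=|\cS(\eta)|\le\frac12\big(\max_s|\eta(s)|\big)L\le\frac12 L^2$, giving $L\ge\sqrt2\,|u|^{1/2}$, and combining the two yields $\dcc(0_\H,p)\ge\frac12(|z|+|u|^{1/2})$. For the upper bound, the explicit competitor that lifts the radial segment $0\to z$ (length $|z|$, swept area $0$) and then runs once around a circle based at $z$ enclosing area $u$ (length $2\sqrt{\pi|u|}$) gives $\dcc(0_\H,p)\le|z|+2\sqrt{\pi|u|}\le C(|z|+|u|^{1/2})$, as required.
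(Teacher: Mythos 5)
First, a point of reference: the paper does not prove this proposition at all --- it is imported verbatim from \cite[Lemma 1.1]{Ju\_grad} --- so your attempt can only be judged on its own merits. Much of it is sound. The inequality $d_\Rm\leq \dcc$ by comparing the two length functionals on horizontal curves is correct; so is the Minkowski step $L_{\Rm(1)}(\gamma)\geq\sqrt{\ell^2+\delta^2}$ with $\delta=|\int c\,\mathrm ds|$ the vertical defect of the horizontal lift of the projection; and your loop gadget checks out in its two sub-regimes (for $\delta\leq 4\pi$ one has $2\sqrt{\pi\delta}\leq 4\pi$, and for $\delta\geq\ell$ one has $2\sqrt{\pi\delta}\leq(\sqrt2-1)\delta+4\pi\leq\sqrt{\ell^2+\delta^2}-\ell+4\pi$). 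The two-sided estimate at the apex is complete and correct: $|z|\leq L$, $|u|=|\cS(\eta)|\leq\frac12 L^2$ give the lower bound with $c=\frac12$, and the segment-plus-circle competitor gives $\dcc(0_\H,p)\leq|z|+2\sqrt{\pi|u|}$, hence $C=2\sqrt\pi$.

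The genuine gap is exactly where you place it, but it is more than ``genuinely computational'': in the regime $4\pi<\delta<\ell$ your transverse-bowing gadget is neither defined nor quantitatively adequate. The projection $\eta_0$ of a near-minimizing $d_\Rm$-curve is merely Lipschitz; a normal offset or ``bowing'' of such a curve does not exist (no second derivative), and for a curve zigzagging at small scales an offset of amplitude $h$ produces neither area gain $\sim h\ell$ nor length increase $o(\ell)$ --- the two effects can cancel or blow up. Preprocessing by an inscribed polygon of macroscopic mesh does not help, since it changes the vertical defect by an amount of order $\ell$, destroying the Minkowski budget. Moreover, even granting smoothness, your own arithmetic does not close: you claim an extra length ``of order $\delta^2/\ell$'' must fit inside a budget $\sqrt{\ell^2+\delta^2}-\ell+4\pi$, which for $\delta\leq\ell$ is only $\delta^2/\big((1+\sqrt2)\ell\big)+4\pi$; with $\delta$ comparable to $\ell$ (say $\delta=\ell/2$, both large) budget and claimed cost are both of order $\ell$ and everything hinges on constants you never compute. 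So the crux of $\dcc\leq d_\Rm+4\pi$ --- a planar area-correction lemma uniform in $(\ell,\delta)$ with additive constant exactly $4\pi$ --- is asserted rather than proved; this is precisely the content of the cited lemma. A workable repair is to abandon $\eta_0$ in this regime and compare extremal (Dido) profiles over the chord: writing $M(\Lambda,r)$ for the maximal area enclosed against a chord of length $r$ by a curve of length $\Lambda$, it suffices to prove $M\big(\sqrt{\ell^2+\delta^2}+4\pi,\,r\big)\geq M(\ell,r)+\delta$ for all $0\leq r\leq\ell$, $\delta\geq0$, which is an explicit circular-arc computation (e.g.\ at $r=\ell$ the arc over the chord gains area $\sqrt{\Delta\ell^3/6}\geq\delta$ from the extra length $\Delta\geq\delta^2/\big((1+\sqrt2)\ell\big)$ once $\ell\geq\delta>4\pi$, and at $r=0$ it reduces to $\delta^2+16\pi^2\geq 4\pi\delta$), together with a continuity argument to hit the intermediate area exactly.
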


\subsection{Isometries}\label{trans_dil}
For every $p\in \H$, we denote by $\tau_p:\H\to \H$ and $\theta_p:\H\to \H$ the left and
right translations respectively, i.e.~
\begin{align*}
  \tau_p(q)=p\cdot q=\theta_q(p)\;.
\end{align*}
By definition a vector field $V$ is a left invariant if and only if
$D\tau_p(V)=V$ for every $p\in\H$. Hence, the left translation
$\tau_p$ is an isometry for both distances $\dcc$ and $d_\Rm$. This
is false for $\theta_q$ unless $q=0_\H$.

Other isometries are
\begin{itemize}
\item the rotations $\rho_\alpha:\H\ni(z,u)\mapsto(\mathrm{e}^{
      i\alpha}z,u)$ defined for $\alpha\in\R$,
\item the reflection $\xi: \H\ni (z,u)\mapsto (\bar{z},-u)$,
\end{itemize}
and up to the multiplicative constant $\lambda$,
\begin{itemize}
\item the dilations $\delta_\lambda:\H\ni (z,u)\mapsto (\lambda
  z,\lambda^2u)$ where $\lambda>0$.
\end{itemize}
One has $D\delta_\lambda(V)=\lambda V$ if and only if $V$ is
horizontal. In general one has 
\begin{align*}
  D\delta_\lambda(a\vx+b\vy+c\vu)=\lambda
  (a\vx+b\vy)+\lambda^2c\vu\;.
\end{align*}
Therefore $\delta_\lambda$ is an isometry between $(\H,\dcc)$ and
$(\H,\lambda^{-1} \dcc)$ as well as between $(\H,d_{\Rm(\eps)})$ and
$(\H,\lambda^{-1}d_{\Rm(\eps\lambda)})$. Hence, all the Riemannian
manifolds $(\H,d_{\Rm(\eps)})_{\eps>0}$ are isometric up to a
multiplicative constant, which justifies that we mainly consider
$(\H,d_{\Rm})$ corresponding to $\eps=1$.

\subsection{Wasserstein space over the Heisenberg group and absolutely
  continuous curves of measures}

Denote by $W_{\H}$ the $L^2$-Wasserstein distance build from the
Carnot--Carath\'eodory distance $\dcc$. We will recall here the
characterization of $2$-absolutely continuous curves in
$\big(\cP_2(\H,\dcc),W_\H\big)$ via solutions to the continuity
equation.

Denote by $\diverg \vv$ the divergence of a vector field
$\vv$ on $\R^3$ with respect to the Lebesgue measure. Note that the basis vector fields $\vx,\vy,\vu$ all
have divergence zero and moreover, we have
$\diverg(f\vx + g\vy + h\vv)=\vx f + \vy g + \vu h$ for every smooth
functions $f,g,h$. We denote by
\begin{align*}
  \nabla_\H f = (\vx f)\vx + (\vy f)\vy\;,
\end{align*}
the horizontal gradient of a function $f$. Then, for any smooth,
compactly supported function $f$ and vector field $\vv$ we have the
integration by parts formula
\begin{align*}
  \int_{\H} f \diverg \vv\dd\cL = - \int_\H \langle\nabla_\H f,\vv\rangle_{\CC} \dd\cL\;.
\end{align*}
Further let us denote by $L^2_{\CC}(\mu)$ the Hilbert space of Borel
vector fields $\vv$ equipped with the norm
\begin{align*}
\norm{\vv}_{L^2_{\CC}}^2=\int\norm{\vv}^2_{\CC}\dd \mu\;.
\end{align*}
Note that any $\vv\in L^2_{\CC}(\mu)$ must be horizontal
$\mu$-a.e. Now, we have the following characterization of absolutely
continuous curves.
\begin{proposition}[{\cite[Proposition
  3.1]{Ju_grad}}]\label{prop:heisenberg-ac-curves}
  A weakly continuous curve $(\mu_s)_{s\in [0,T]}$ in $\cP_2(\H)$ is
  $2$-absolutely continuous with respect to $W_\H$ if and only if there
  exists a Borel family of vector fields $\vv_s$ with
  $\int_0^T\norm{\vv_s}_{L^2_{\CC}(\mu_s)}^2\dd s<\infty$ such that the
  continuity equation
  \begin{align*}
    \partial_s\mu +\diverg(\mu_s\vv_s)=0
  \end{align*}
  holds in distribution sense. In this case we have $|\dot \mu_s| \leq \norm{\vv_s}_{L^2_{\CC}(\mu_s)}$ for a.e. $s$.
  Moreover, $\vv_s$ is uniquely determined for a.e.~$s$ if we require
  \begin{align*}
  \vv_s\in T_{\mu_s}\cP_2(\H) := \overline{ \{ \nabla_\H \psi \ | \
    \psi \in C^\infty_c(\R^3) \}}^{L^{2}_{\CC}(\mu_s)}
  \end{align*}
  and there holds $|\dot \mu_s|=\norm{\vv_s}_{L^2_{\CC}(\mu_s)}$.
\end{proposition}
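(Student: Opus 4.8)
The plan is to follow the functional-analytic scheme of \cite[Thm.~8.3.1]{AGS08}, replacing the Euclidean gradient and divergence everywhere by their horizontal counterparts $\nabla_\H$, $\diverg$ and the Euclidean length by $\dcc$. The two implications are handled separately. Throughout one uses that a vector field of finite $L^2_{\CC}(\mu)$-norm is automatically horizontal $\mu$-a.e., that $(\H,\dcc)$ is proper and geodesic (so that optimal couplings and minimizing horizontal geodesics exist), and that the horizontal integration-by-parts formula $\int f\diverg\vv\dd\cL=-\int\langle\nabla_\H f,\vv\rangle_{\CC}\dd\cL$ lets us read the continuity equation in its weak form $\frac{\mathrm d}{\mathrm d s}\int\psi\dd\mu_s=\int\langle\nabla_\H\psi,\vv_s\rangle_{\CC}\dd\mu_s$ for $\psi\in C^\infty_c(\R^3)$.

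For the implication from the continuity equation to absolute continuity together with the bound $|\dot\mu_s|\le\norm{\vv_s}_{L^2_{\CC}(\mu_s)}$, I would regularize. Mollifying the pair $(\mu_s,\vv_s)$ in space by left-invariant group convolution preserves both the horizontal structure and the continuity equation and yields smooth data $(\mu^\eps_s,\vv^\eps_s)$; the smooth time-dependent horizontal field then admits a classical flow $p\mapsto\Phi_{s,t}(p)$ with horizontal flow lines and $\mu^\eps_t=(\Phi_{s,t})_\#\mu^\eps_s$. Coupling $\mu^\eps_s$ and $\mu^\eps_t$ through $(\mathrm{id},\Phi_{s,t})$ and bounding $\dcc(p,\Phi_{s,t}(p))$ by the sub-Riemannian length $\int_s^t\norm{\vv^\eps_r(\Phi_{s,r}(p))}_{\CC}\dd r$ of the flow line, a Minkowski-inequality argument gives $W_\H(\mu^\eps_s,\mu^\eps_t)\le\int_s^t\norm{\vv^\eps_r}_{L^2_{\CC}(\mu^\eps_r)}\dd r$. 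A convexity estimate for the convolution bounds the right-hand side by $\int_s^t\norm{\vv_r}_{L^2_{\CC}(\mu_r)}\dd r$, and letting $\eps\to0$ delivers $2$-absolute continuity of $(\mu_s)$ together with the stated bound on its metric derivative.

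For the converse, given a $2$-absolutely continuous curve I would, for a.e.~$s$, define a linear functional on horizontal gradients by $\ell_s(\nabla_\H\psi)=\frac{\mathrm d}{\mathrm d s}\int\psi\dd\mu_s$ and establish the pointwise estimate
\begin{align*}
  \Big|\frac{\mathrm d}{\mathrm d s}\int\psi\dd\mu_s\Big|~\le~|\dot\mu_s|\cdot\norm{\nabla_\H\psi}_{L^2_{\CC}(\mu_s)}\quad\text{for a.e.~}s\;.
\end{align*}
This comes from writing $\int\psi\dd\mu_{s+h}-\int\psi\dd\mu_s=\int[\psi(q)-\psi(p)]\dd\pi_{s,h}(p,q)$ for an optimal coupling $\pi_{s,h}$, expanding $\psi(q)-\psi(p)=\int_0^1\langle\nabla_\H\psi(\gamma^{p,q}_r),\dot\gamma^{p,q}_r\rangle_{\CC}\dd r$ along a minimizing horizontal geodesic $\gamma^{p,q}$, and applying Cauchy--Schwarz against $\pi_{s,h}$: the first factor is $W_\H(\mu_s,\mu_{s+h})$, whose ratio with $h$ tends to $|\dot\mu_s|$, while the second factor tends to $\norm{\nabla_\H\psi}_{L^2_{\CC}(\mu_s)}$ as the coupling concentrates on the diagonal. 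The bound shows that $\ell_s$ factors through $\nabla_\H\psi$ and extends to a bounded functional on $T_{\mu_s}\cP_2(\H)$ of norm at most $|\dot\mu_s|$; the Riesz representation theorem then produces the unique $\vv_s\in T_{\mu_s}\cP_2(\H)$ solving the weak continuity equation with $\norm{\vv_s}_{L^2_{\CC}(\mu_s)}\le|\dot\mu_s|$. Combined with the first implication this forces equality $|\dot\mu_s|=\norm{\vv_s}_{L^2_{\CC}(\mu_s)}$, and uniqueness follows since any other solution differs by a field orthogonal to $T_{\mu_s}\cP_2(\H)$; a measurable selection in $s$ makes $s\mapsto\vv_s$ Borel.

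The main obstacle is the concentration step in the converse: in the sub-Riemannian setting minimizing geodesics between $p$ and $q$ are neither unique nor smoothly varying, and $\dcc$ fails to be smooth (for instance along the center), so one cannot differentiate the distance naively. I would circumvent this by a measurable selection of minimizing horizontal geodesics $\gamma^{p,q}$ and by exploiting that, as $h\to0$, the coupling mass concentrates near the diagonal and the selected geodesics shrink uniformly on bounded sets; uniform continuity of the compactly supported $\nabla_\H\psi$ (in the Euclidean topology, equivalent to the $\dcc$-topology on bounded sets) then forces $\max_r\norm{\nabla_\H\psi(\gamma^{p,q}_r)}_{\CC}\to\norm{\nabla_\H\psi(p)}_{\CC}$, yielding the desired limit by dominated convergence.
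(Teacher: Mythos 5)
The paper itself contains no proof of this proposition: it is imported verbatim from \cite[Prop.~3.1]{Ju_grad}, so the relevant comparison is with the argument of that reference, which --- exactly like your proposal --- adapts the scheme of \cite[Thm.~8.3.1]{AGS08} to the sub-Riemannian setting: mollification plus flow-line transport for the implication ``continuity equation $\Rightarrow$ absolute continuity'', and the duality argument (test the curve against $\psi\in C^\infty_c(\R^3)$, expand $\psi(q)-\psi(p)$ along minimizing horizontal geodesics, pass through Cauchy--Schwarz and Riesz representation in $T_{\mu_s}\cP_2(\H)$) for the converse. Your treatment of the genuinely sub-Riemannian obstruction in the converse is correct: since constant-speed minimizing geodesics satisfy $\norm{\dot\gamma^{p,q}_r}_{\CC}=\dcc(p,q)$ and have diameter $\dcc(p,q)$, a measurable selection plus the splitting of the coupling into $\{\dcc(p,q)\leq\delta\}$ and its complement gives the concentration limit, and the chain rule $\psi(q)-\psi(p)=\int_0^1\langle\nabla_\H\psi(\gamma_r),\dot\gamma_r\rangle_{\CC}\dd r$ is legitimate because the tangent of a horizontal curve only sees the horizontal gradient. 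One standard point you leave implicit: $\frac{\mathrm{d}}{\mathrm{d}s}\int\psi\dd\mu_s$ exists only on a full-measure set of $s$ depending on $\psi$, so the functional $\ell_s$ must be built on a countable dense family of test functions with a common exceptional null set, as in \cite{AGS08}.

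The one step you assert without verification, and where a wrong choice actually breaks the proof, is the claim that group convolution ``preserves both the horizontal structure and the continuity equation''. Left-invariant derivatives commute with convolution only in one order: with $\rho_\eps*\mu=\int(\tau_g)_\#\mu\,\rho_\eps(g)\dd g$ (kernel acting through \emph{left} translations) one has $\vx(\rho_\eps*\mu)=\rho_\eps*(\vx\mu)$, so writing the momentum as $a_s\vx+b_s\vy$ and using $\diverg(a\vx+b\vy)=\vx a+\vy b$, the mollified triple $(\rho_\eps*\mu_s,\rho_\eps*a_s,\rho_\eps*b_s)$ again solves the continuity equation with a horizontal field, and joint convexity of $(a,b,\rho)\mapsto(a^2+b^2)/\rho$ gives the norm bound since left translations preserve the frame $(\vx,\vy)$. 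In the opposite order the derivative emerges as a right-invariant field, and since $\vx-\hat\vx=-y\,\vu$ the mollified momentum acquires a vertical component, destroying horizontality. A related small gap: $W_\H$-closeness of $\rho_\eps*\mu$ to $\mu$ is not immediate from translations being isometries, because the displacement is $\dcc(gh,h)=\dcc\big(0_\H,h^{-1}gh\big)$, a conjugation whose central part grows with the horizontal size of $h$; one needs the finite second moment of $\mu$ together with vanishing moments of $\rho_\eps$ to conclude. With these two points made explicit, your argument is complete and coincides in substance with the cited proof.
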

 
Following verbatim the argument of Lemma \ref{lem:H-1vsL2} we obtain a
similar statement in the Heisenberg group.

\begin{lemma}\label{lem:Heisenberg-H-1vsL2}
  Let $\mu=\rho\cL\in \cP_2(\H)$ with strictly positive density
  $\rho$. Assume that $\mu$ satisfies the Poincar\'e type inequality
  \begin{align}\label{eq:Poinc-heis}
    \int |f|^2\dd\mu \leq C \int \norm{\nabla_\H f}_{cc}^2 \dd\mu\;,
  \end{align}
  for all $f\in C^\infty_c(\H)$ with $\int f\dd\mu =0$. Let $s\in
  L^1(\H,\cL)$ be such that $\int s\dd\cL=0$ and
  \begin{align*}
    \norm{s/\sqrt{\rho}}^2_{L^2} ~=~ \int \frac{s^2}{\rho}\dd \cL ~<~\infty\;.
  \end{align*}
  Then there exists a unique horizontal vector field
  $V\in T_{\mu}\cP_2(\H)$ such that the equation
  \begin{align*}
    s+\diverg(\mu V)=0
  \end{align*}
  holds in distribution sense. Moreover, we have
  \begin{align}\label{eq:H-1vsL2-heis}
    \norm{V}_{L^2_{\CC}(\mu)}^2 ~\leq~ C \norm{s/\sqrt{\rho}}^2_{L^2}\;.
  \end{align}
\end{lemma}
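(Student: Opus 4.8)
The plan is to transcribe the Riesz-representation argument of Lemma \ref{lem:H-1vsL2}, systematically replacing the full gradient $\nabla$ by the horizontal gradient $\nabla_\H$, the space $L^2(\mu;\R^n)$ by $L^2_{\CC}(\mu)$, and the Euclidean Poincar\'e inequality by its horizontal counterpart \eqref{eq:Poinc-heis}. First I would introduce, for $f\in C^\infty_c(\H)$ with $\int f\dd\mu=0$, the linear functional $B(f)=\int s f\dd\cL$. Writing $sf=(s/\sqrt\rho)(f\sqrt\rho)$, the Cauchy--Schwarz inequality followed by \eqref{eq:Poinc-heis} gives
\begin{align*}
  B(f)~\leq~ \norm{s/\sqrt\rho}_{L^2}\Big(\int f^2\dd\mu\Big)^{1/2}~\leq~ \sqrt{C}\,\norm{s/\sqrt\rho}_{L^2}\,\norm{\nabla_\H f}_{L^2_{\CC}(\mu)}\;.
\end{align*}

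This bound shows that $B$ depends continuously on $f$ only through $\nabla_\H f$, so it factors through the map $f\mapsto\nabla_\H f$ and extends to a bounded linear functional on the Hilbert space $T_\mu:=T_\mu\cP_2(\H)$, carrying the scalar product $\langle U,W\rangle=\int\langle U,W\rangle_{\CC}\dd\mu$, with operator norm at most $\sqrt{C}\norm{s/\sqrt\rho}_{L^2}$. The Riesz representation theorem then yields a unique $V\in T_\mu$ such that $\langle V,W\rangle_{L^2_{\CC}(\mu)}=B(W)$ for all $W\in T_\mu$ and $\norm{V}_{L^2_{\CC}(\mu)}\leq\sqrt{C}\norm{s/\sqrt\rho}_{L^2}$; this is precisely \eqref{eq:H-1vsL2-heis}. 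Since every element of $T_\mu$ is horizontal $\mu$-almost everywhere, $V$ is automatically a horizontal vector field, so no extra argument is needed to obtain horizontality.

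It then remains to read $\langle V,\nabla_\H f\rangle=B(f)$ as the continuity equation. For $f\in C^\infty_c(\H)$ with $\int f\dd\mu=0$ this identity is $\int s f\dd\cL=\int\langle V,\nabla_\H f\rangle_{\CC}\dd\mu$, which by the horizontal integration-by-parts formula recorded before the lemma says exactly that $s+\diverg(\mu V)=0$ in the sense of distributions, tested against $f$. The mean-zero restriction is harmless: replacing a general test function $\psi$ by $\psi-\int\psi\dd\mu$ alters neither $\int s\psi\dd\cL$ (because $\int s\dd\cL=0$) nor $\nabla_\H\psi$ (because $\nabla_\H$ annihilates constants), so the identity propagates to all of $C^\infty_c(\H)$; uniqueness of $V$ within $T_\mu$ is built into the Riesz step. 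I expect the only point requiring genuine care---rather than mechanical substitution---to be this passage between mean-zero and arbitrary test functions, together with the verification that the gradients $\{\nabla_\H\psi\}$ defining $T_\mu$ are adequately captured by the mean-zero functionals. This is where the sub-Riemannian structure, through the horizontal Poincar\'e inequality and the H\"ormander connectivity underlying it, takes over the role played by the elementary Euclidean facts in Lemma \ref{lem:H-1vsL2}.
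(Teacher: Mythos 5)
Your proposal is correct and coincides with the paper's approach: the paper proves this lemma simply by declaring that one follows the argument of Lemma \ref{lem:H-1vsL2} verbatim, replacing $\nabla$ by $\nabla_\H$, $L^2(\mu;\R^n)$ by $L^2_{\CC}(\mu)$, and the Euclidean Poincar\'e inequality by \eqref{eq:Poinc-heis}, which is exactly the Riesz-representation argument you transcribe. Your explicit treatment of the passage from mean-zero to arbitrary test functions (using $\int s\dd\cL=0$ and that $\nabla_\H$ annihilates constants) is a routine point that the paper's Euclidean proof also leaves implicit, so you have if anything been slightly more careful.
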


\subsection{Heat kernel}\label{heat_ker}

Another important consequence of the H\"ormander condition is the
hypoellipticity of the operators $\Delta_{\CC}=\vx^2+\vy^2$ and
$\Delta_{\CC}-\partial_t$, which in particular means that
distributional solutions $\rho:(0,\infty)\times \H\to \R$ of the heat
equation
\begin{align*}
\partial_t\rho=\Delta_{cc} \rho,
\end{align*}
are smooth. Note that the heat equation is left invariant. As shown by
Gaveau \cite{Ga}, the unique distributional solution $\mu_t=\rho_t\cL$
with initial condition $\mu_0\in\cP_2(\H)$ is given via convolution
with a fundamental solution $\h_t$:
\begin{align*}
  \rho_t(p)= \int \h_t(q^{-1}p)\dd\mu_0(q)\;,
\end{align*}
where $\h_t$ is given explicitly by
\begin{align*}
  \h_t(z,u)=\frac{2}{(4\pi t)^{2}}\int_{\R}\exp\left(\frac{\lambda}{t}\left(iu-\frac{|z|^2}{4}\coth \lambda\right)\right)\frac{\lambda}{\sinh \lambda}\,\dd\lambda.
\end{align*}
In fact $\h_t$ is the density of $X_t=(B_{2t},L_{2t})$
where the process $(B_t)_{t\geq0}$ is a planar Brownian motion
$B=B^1+iB^2$ and
$L_t=\frac12\int_0^t(B_s^1\dd B_s^2-B_s^2\dd B_s^1)$ is the L\'evy
area. Hence $\h_t$ is a strictly positive probability density with
respect to $\cL$ for all $t$. Moreover, $\h_t\cL\in\cP_2(\H)$.

We will need the following estimates \cite[(14) and proof of Thm. 3.1]{BBBC}.
\begin{align}\label{finite_entropy}
\int\big((\vx \log\h_t)^2 + (\vy \log\h_t)^2\big)\h_t\dd\cL=\frac{2}{t}\;,\quad \int(\vu \log\h_t)^2\h_t\dd \cL<\infty\;.
\end{align}
The same estimates hold for the right invariant vector fields
$\hat\vx,\hat\vy,\hat\vu$.
Note also the scaling relation
\begin{align}\label{eq:scaling-heis}
\h_t(z,u)=\frac{1}{t^{2}}\h_1(z/\sqrt{t},u/t)\;.
\end{align}
Given $t\geq0$ and $q\in\H$ we define the measure
$\nu^t_q\in\cP_2(\H)$ via
\begin{align*}
\nu^t_q =
\begin{cases}
\delta_q&\text{if }t=0\;,\\
(\tau_q)_\#(\h_t\cL)=\h_t(q^{-1}p)\cL(\mathrm{d} p)&\text{otherwise,}
\end{cases}
\end{align*}
and call it the heat kernel measure centered at $q$.

\begin{lemma}\label{lem_injective}
  The map $\iota_t:(\H,d_{\CC})\ni q \mapsto \nu^t_q\in (\cP_2(\H),W_\H)$
  is injective and Lipschitz. Moreover, $W_\H(\nu^t_p,\nu^t_q)$ tends to
  infinity as $d_{\CC}(p,q)$ goes to infinity.
\end{lemma}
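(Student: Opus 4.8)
The three assertions are essentially independent, and I would treat them in turn. Throughout I would exploit that the left translations $\tau_p$ are isometries of $(\H,\dcc)$, hence act isometrically on $\big(\cP_2(\H),W_\H\big)$. Combined with the identity $\nu^t_q=(\tau_q)_\#(\h_t\cL)$, which gives $\nu^t_q=(\tau_p)_\#\nu^t_{p^{-1}\cdot q}$, this yields the reduction
\[
  W_\H(\nu^t_p,\nu^t_q)=W_\H\big(\h_t\cL,\nu^t_{p^{-1}\cdot q}\big),\qquad \dcc(p,q)=\dcc(0_\H,p^{-1}\cdot q),
\]
so that every estimate may be reduced to the case $p=0_\H$.

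For \emph{injectivity} I would recover $q$ from $\nu^t_q$ via its barycenter $b(\mu)=\int_\H x\dd\mu(x)$ taken in the linear coordinates of $\H=\C\times\R$. Since $\h_t$ is the law of $X_t=(B_{2t},L_{2t})$ with $\E[B_{2t}]=0$ and $\E[L_{2t}]=0$, integrating the group law $q\cdot x=\big(z_q+z_x,\,u_q+u_x-\tfrac12\mathrm{Im}(z_q\bar z_x)\big)$ against $\h_t\cL$ and using that the correction term is linear in $z_x$ with vanishing mean gives $b(\nu^t_q)=q$. Hence $q\mapsto\nu^t_q$ has the barycenter as a left inverse and is injective. (Alternatively, $\nu^t_p=\nu^t_q$ forces $\h_t\cL$ to be invariant under the left translation by $p^{-1}\cdot q$, which is impossible for the strictly positive integrable density $\h_t$ unless $p=q$.)

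For the \emph{Lipschitz} bound I would estimate the $W_\H$-length of the curve $s\mapsto\mu_s=\nu^t_{\gamma_s}$, where $\gamma$ is a $\dcc$-geodesic from $p$ to $q$ with horizontal velocity $\dot\gamma_s=a_s\vx+b_s\vy$, so that $\|\dot\gamma_s\|_\CC^2=a_s^2+b_s^2$. Writing locally $\gamma_s=\gamma_{s_0}\cdot c_s$ and differentiating $\rho_s(y)=\h_t(\gamma_s^{-1}y)$, the flow of left translations is generated by the right-invariant frame, giving $\partial_s\rho_s=-\big(a_s\hat\vx\h_t+b_s\hat\vy\h_t\big)(\gamma_s^{-1}\cdot)$. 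A Lebesgue-preserving change of variables, the Cauchy--Schwarz inequality, and the Fisher information bound \eqref{finite_entropy} \emph{for the right-invariant fields} then yield
\[
  \Big\|\partial_s\rho_s/\sqrt{\rho_s}\Big\|_{L^2}^2\;\leq\;\frac{4}{t}\,\|\dot\gamma_s\|_\CC^2 .
\]
Applying Lemma \ref{lem:Heisenberg-H-1vsL2} to $s=\partial_s\rho_s$ and $\mu=\nu^t_{\gamma_s}$ produces a horizontal $V_s\in T_{\mu_s}\cP_2(\H)$ solving the continuity equation with $\|V_s\|_\CC\leq\sqrt{C}\cdot\frac{2}{\sqrt t}\|\dot\gamma_s\|_\CC$, where $C$ is the Poincaré constant in \eqref{eq:Poinc-heis} for $\nu^t_{\gamma_s}$, which by left invariance equals that of $\h_t\cL$ and is therefore uniform in $s$. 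Proposition \ref{prop:heisenberg-ac-curves} then gives $|\dot\mu_s|\leq\|V_s\|_\CC$, and integrating over $\gamma$ gives $W_\H(\nu^t_p,\nu^t_q)\leq\frac{2\sqrt C}{\sqrt t}\,\dcc(p,q)$. The main obstacle here is precisely the verification that the heat kernel measure $\h_t\cL$ satisfies the subelliptic Poincaré inequality \eqref{eq:Poinc-heis}; I would invoke the known Poincaré (indeed logarithmic Sobolev) inequality for the Heisenberg heat kernel measure, e.g.\ from \cite{BBBC}.

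For the \emph{coercivity} I would use Kantorovich duality with the $1$-Lipschitz (for $\dcc$) function $y\mapsto\dcc(p,y)$. By left invariance $\int\dcc(p,\cdot)\dd\nu^t_p=M_t:=\int\dcc(0_\H,x)\,\h_t(x)\dd\cL(x)$, which is finite since $\h_t\cL\in\cP_2(\H)$; the reverse triangle inequality gives $\int\dcc(p,\cdot)\dd\nu^t_q\geq\dcc(p,q)-M_t$. Since $W_\H\geq W_1$, this yields
\[
  W_\H(\nu^t_p,\nu^t_q)\;\geq\;\dcc(p,q)-2M_t\;\xrightarrow[\ \dcc(p,q)\to\infty\ ]{}\;\infty ,
\]
which is the desired statement.
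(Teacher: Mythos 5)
Your proof is correct, but it routes around the paper's argument at two of the three steps. For injectivity, the paper projects to $\C$ (where $\nu^t_{(z,u)}$ has Gaussian marginal centered at $z$) to separate the $z$-coordinates and then uses the translation $(\tau_{(0,u'-u)})_\#$ to separate the $u$-coordinates; your barycenter computation handles both coordinates in one stroke and is arguably more self-contained --- the key facts you need, $\E[B_{2t}]=0$, $\E[L_{2t}]=0$, the vanishing mean of the bilinear correction $-\tfrac12\mathrm{Im}(z_q\bar z_x)$, and finiteness of the linear first moments (which follows from $\h_t\cL\in\cP_2(\H)$ via the lower bound $\dcc(0_\H,p)\geq c(|z|+|u|^{1/2})$ of Proposition \ref{compare}), are all available in the paper. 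For the Lipschitz bound, the paper simply cites Kuwada's duality between Wasserstein contraction and the Driver--Melcher gradient estimate (\cite[Prop.~4.1]{Ku}, \cite{DM05}, \cite{BBBC}); you instead run the continuity-equation machinery directly: differentiate $\rho_s(y)=\h_t(\gamma_s^{-1}y)$ along a horizontal geodesic using the right-invariant frame, bound $\|\partial_s\rho_s/\sqrt{\rho_s}\|_{L^2}$ by the Fisher information \eqref{finite_entropy} (which the paper notes holds for $\hat\vx,\hat\vy$ as well; Cauchy--Schwarz in fact gives the constant $2/t$ rather than your $4/t$, harmless since you only need an upper bound), invoke the Poincar\'e inequality for $\nu^t_q$ --- which the paper records with constant $C=ct$ from \cite[Thm.~1.7]{DM05}, making your Lipschitz constant $t$-uniform as it should be by scaling --- and conclude via Lemma \ref{lem:Heisenberg-H-1vsL2} and Proposition \ref{prop:heisenberg-ac-curves}. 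This is exactly the machinery the paper deploys later in the proof of Theorem \ref{thm:Heisenberg-main}, so your route is a legitimate anticipation of it; what the paper's shorter citation buys is the \emph{sharp} Lipschitz constant $C_2$, which is reused in Remark \ref{rem:constant_K} to identify $K=C_2\geq 2$, whereas your argument yields an unspecified constant $\sqrt{2c}$. For the coercivity, your $W_1$-duality bound with the potential $\dcc(p,\cdot)$ is essentially the paper's argument in dual clothing: the paper applies the triangle inequality $W_\H(\nu^t_p,\nu^t_q)\geq W_\H(\delta_p,\delta_q)-2W_\H(\delta_{0_\H},\h_t\cL)$, which is marginally more direct but gives the same linear lower bound $\dcc(p,q)-O(1)$.
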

Before we go to the proof, let us stress that the isometries of
$(\H,\dcc)$ introduced in paragraph \ref{trans_dil} give rise to
isometries of $(\cP_2(\H),W_\H)$ via pushforward. In particular,
translations of measures $(\tau_p)_\#$ are isometries.

\begin{proof}
  We first check injectivity. Form the probabilistic interpretation
  of $\h_t$ one sees that the projection $P^\C:\H\to\C$ transports
  $\nu^t_{(z,u)}$ to the $2$-dimensional Gaussian measure centered at
  $z$ with covariance $2t\cdot\mathrm{Id}$. Therefore, we have
  $\nu^t_{(z,u)}\neq\nu^t_{(z',u')}$ provided $z\neq z'$. Further,
  note that $\nu^t_{(z,u)}=(\tau_{(0,u'-u)})_\#\nu^t_{(z,u')}$. Hence,
  the two measures are distinct provided $u\neq u'$ since
  $(\tau_{(0,u'-u)})_\#$ is an isometry. This proves injectivity of
  $\iota_t$.
  
  Lipschitz continuity of $\iota_t$ follows from Kuwada's duality between
  $L^q$-Wasserstein contraction estimates and $L^p$-gradient estimates
  on the heat kernel of the Heisenberg group, established e.g.~in
  \cite{BBBC}. See \cite[Prop. 4.1]{Ku} and the discussion thereafter.
  The distance $W_\H(\delta_p,\nu^t_p)=W_\H(\delta_{0_\H},\nu)$ is
  independent from $p$. Hence
  \begin{align*}
    W_\H(\nu_p^t,\nu_q^t)\geq W_\H(\delta_p,\delta_q)-2W_\H(\delta_{0_\H},\nu)
  \end{align*}
  tends to infinity as $\dcc(p,q)=W_\H(\delta_p,\delta_q)$ tends to
  infinity.
\end{proof}

Finally, we note that for any $t>0$ the measures $\nu_q^t$ satisfy the
Poincar\'e type inequality \eqref{eq:Poinc-heis} with a constant
$C=c\cdot t$ for some $c>0$, see \cite[Thm.~1.7]{DM05}.

\subsection{Smoothing effect of the transformation}

In view of Lemma \ref{lem_injective} the map
$\iota_t:\H\ni q \mapsto \nu^t_q\in \cP_2(\H)$ satisfies Assumption
\ref{ass:iota} and the hypotheses of Proposition \ref{prop:convergence} are also satisfied. Thus as outlined in Section \ref{sec:construction} we
can introduce the new distance
\begin{align*}
  \tilde d_t(p,q)= W_\H(\nu^t_p,\nu^t_q)\;,
\end{align*}
as well as the associated length distance
\begin{align*}
 d_t(q,p)~=~\inf\int_0^T|\dot p_s|_t\dd s = \inf \int_0^T |\dot \nu^t_{p_s}|\dd s\;,
\end{align*}
where the infimum is taken over absolutely continuous (or equivalently
Lipschitz) curves $(p_s)_{s\in[0,T]}$ in $(\H,\tilde d_t)$ such that
$p_0=p,p_T=q$ and $|\dot p_s|_t$ and $|\dot \nu^t_{p_s}|$ denote the
metric derivatives with respect to $\tilde d_t$ and $W_\H$
respectively.

We have the following scaling relation.
\begin{proposition}\label{scaling}
For $t>0$ and every $p,q\in\H$ we have
\begin{align}\label{eq:scaling-heis-dt}
 d_{t}(\delta_{\sqrt{t}}p,\delta_{\sqrt{t}}q) = \sqrt{t}\cdot d_1(p,q)\;.
\end{align}
In other words, the dilation $\delta_{\sqrt{t}}$ is an isometry from
$(\H,\sqrt{t}d_1)$ to $(\H,d_t)$.
\end{proposition}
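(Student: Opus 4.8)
The plan is to follow the strategy of Lemma \ref{lem:scaling}: first establish the scaling identity for the chord distance $\tilde d_t$, and then observe that it passes to the induced length distance. Thus the core of the argument is to prove that for all $p,q\in\H$
\begin{align*}
  \tilde d_t(\delta_{\sqrt t}p,\delta_{\sqrt t}q) = \sqrt t\,\tilde d_1(p,q)\;.
\end{align*}

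The first key step is to identify how the dilation acts on the heat kernel measures, namely to show
\begin{align*}
  \nu^t_{\delta_{\sqrt t}q} = (\delta_{\sqrt t})_\#\nu^1_q\;.
\end{align*}
For $q=0_\H$ this is a reformulation of the scaling relation \eqref{eq:scaling-heis}: a change of variables shows that $(\delta_{\sqrt t})_\#(\h_1\cL)$ has Lebesgue density $(z,u)\mapsto t^{-2}\h_1(z/\sqrt t,u/t)=\h_t(z,u)$, so that $(\delta_{\sqrt t})_\#\nu^1_{0_\H}=\h_t\cL=\nu^t_{0_\H}$. For general $q$ I would use that $\delta_{\sqrt t}$ is a group automorphism of $\H$, a one-line check from the multiplication law, which yields the intertwining relation $\tau_{\delta_{\sqrt t}q}\circ\delta_{\sqrt t}=\delta_{\sqrt t}\circ\tau_q$. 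Pushing $\h_1\cL$ forward and recalling $\nu^t_q=(\tau_q)_\#(\h_t\cL)$ then gives the claim.

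With this identity in hand, the chord-distance scaling follows from the fact recalled in Section \ref{trans_dil} that $\delta_{\sqrt t}$ is an isometry from $(\H,\dcc)$ to $(\H,(\sqrt t)^{-1}\dcc)$. At the level of Wasserstein spaces the induced pushforward is an isometry from $(\cP_2(\H),W_\H)$ to $(\cP_2(\H),(\sqrt t)^{-1}W_\H)$, so that $W_\H((\delta_{\sqrt t})_\#\mu,(\delta_{\sqrt t})_\#\nu)=\sqrt t\,W_\H(\mu,\nu)$. Combining this with the previous step gives
\begin{align*}
  \tilde d_t(\delta_{\sqrt t}p,\delta_{\sqrt t}q)=W_\H(\nu^t_{\delta_{\sqrt t}p},\nu^t_{\delta_{\sqrt t}q})=W_\H\big((\delta_{\sqrt t})_\#\nu^1_p,(\delta_{\sqrt t})_\#\nu^1_q\big)=\sqrt t\,\tilde d_1(p,q)\;.
\end{align*}

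Finally, I would pass to the length distance using the formulation \eqref{partition}. Since $\delta_{\sqrt t}$ is a homeomorphism of $\H$, the map $\gamma\mapsto\delta_{\sqrt t}\circ\gamma$ is a bijection between continuous curves from $p$ to $q$ and continuous curves from $\delta_{\sqrt t}p$ to $\delta_{\sqrt t}q$. Applying the chord-distance identity termwise to each partition sum multiplies it by the constant factor $\sqrt t$; taking the supremum over partitions and the infimum over curves therefore yields $d_t(\delta_{\sqrt t}p,\delta_{\sqrt t}q)=\sqrt t\,d_1(p,q)$, as desired. The only genuinely non-formal point is the interaction between dilation and left translation: once one verifies that $\delta_{\sqrt t}$ is a group automorphism and hence intertwines with the $\tau_q$, and couples this with the heat-kernel scaling \eqref{eq:scaling-heis}, everything else is a transfer of metric structure through isometries.
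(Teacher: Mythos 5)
Your proposal is correct and takes essentially the same route as the paper, whose proof likewise combines the fact that $(\delta_\lambda)_\#$ dilates $W_\H$ by the factor $\lambda$ with the heat-kernel scaling \eqref{eq:scaling-heis} to get $(\delta_\lambda)_\#\nu^t_q=\nu^{\lambda^2 t}_{\delta_\lambda(q)}$, deduces the identity for $\tilde d_t$, and passes to the length distance. You merely make explicit two details the paper leaves implicit: the automorphism property giving $\tau_{\delta_{\sqrt t}q}\circ\delta_{\sqrt t}=\delta_{\sqrt t}\circ\tau_q$, and the termwise scaling of partition sums in \eqref{partition} for the passage to the induced length distance.
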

\begin{proof}
  The measure dilation $(\delta_\lambda)_\#$ dilates the Wasserstein
  distance $W_\H$ by the factor $\lambda$. As a consequence of the
  scaling relation \eqref{eq:scaling-heis} we find that
  $(\delta_\lambda)_\#\nu^t_q=\nu^{\lambda^2t}_{\delta_\lambda(q)}$. Thus,
  we obtain \eqref{eq:scaling-heis-dt} with $d$ replaced by
  $\tilde d$, which then easily passes to the induced length distance.
\end{proof}

We can now state our main theorem about the smoothing effect of the
transformation of the distance.

\begin{theorem}\label{thm:Heisenberg-main}
  The distance $d_t$ is induced by a left-invariant Riemannian metric
  tensor $g_t$. More precisely, we have $d_t=K\cdot
  d_{\Rm(\kappa\sqrt{t})}$, where the constant $K,\kappa$ satisfy
  $K\geq2$ and $K/\kappa<\sqrt2$.
\end{theorem}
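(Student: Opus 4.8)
The plan is to show that the distance $d_t$ comes from a left-invariant Riemannian metric tensor, then to identify that tensor with a multiple of one of the $g_{\Rm(\eps)}$. Since the construction via $\iota_t$ is manifestly left-invariant (as $\tau_p$ induces an isometry of $(\cP_2(\H),W_\H)$ by the remark preceding Lemma~\ref{lem_injective}), the chord distance $\tilde d_t$ and hence the length distance $d_t$ are left-invariant. So it suffices to construct a single inner product $g_t$ on the tangent space at the identity $0_\H$ and verify that $d_t$ is the associated left-invariant Riemannian distance. By the scaling relation of Proposition~\ref{scaling} it is enough to treat $t=1$ and then transport the conclusion to general $t>0$ by the dilation $\delta_{\sqrt t}$, which will explain the appearance of $\kappa\sqrt t$.

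First I would define, in analogy with the cone case in Proposition~\ref{prop:warp}, a bilinear form on $T_{0_\H}\H=\Vect(\vx,\vy,\vu)$ by
\begin{align*}
  g_1(w,w) ~=~ \norm{V^w}^2_{L^2_{\CC}(\nu^1_{0_\H})}\;,
\end{align*}
where $V^w\in T_{\nu^1_{0_\H}}\cP_2(\H)$ is the unique horizontal vector field solving the continuity equation
\begin{align*}
  \frac{\dd}{\dd h}\Big|_{h=0} \big(\tau_{\exp(hw)}\big)_\#\,\nu^1_{0_\H} ~+~ \diverg\big(\nu^1_{0_\H} V^w\big) ~=~ 0
\end{align*}
produced by Lemma~\ref{lem:Heisenberg-H-1vsL2}; existence applies because $\nu^1_{0_\H}=\h_1\cL$ has smooth positive density, satisfies the Poincar\'e inequality \eqref{eq:Poinc-heis}, and the infinitesimal displacement of a translated heat kernel has the requisite integrability thanks to the estimates \eqref{finite_entropy}. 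By uniqueness $V^w$ depends linearly on $w$, so $g_1$ is a genuine inner product. The argument of Proposition~\ref{prop:warp}, reading the Wasserstein metric derivative of $s\mapsto\nu^1_{p_s}$ through the continuity equation and matching it with $V^{\dot p_s}$ by the uniqueness in Lemma~\ref{lem:Heisenberg-H-1vsL2}, then identifies the $\tilde d_1$-length of a horizontal curve with its $g_1$-Riemannian length, yielding $d_1=d_{g_1}$.

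The substantive point is to compute $g_1$ and show it is a \emph{rescaling} of one of the standard metrics $\norm{\cdot}_{\Rm(\eps)}$, i.e.\ that $g_1$ is diagonal in the frame $\vx,\vy,\vu$ with $g_1(\vx,\vx)=g_1(\vy,\vy)=K^2$ and $g_1(\vu,\vu)=(K/\kappa)^2$. Diagonality and the equality of the $\vx,\vy$ coefficients follow from symmetry: the rotations $\rho_\alpha$ and the reflection $\xi$ from Section~\ref{trans_dil} fix $0_\H$ and act isometrically on $(\cP_2(\H),W_\H)$, so $g_1$ must be invariant under the induced action on $T_{0_\H}\H$, which forces the stated block form. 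This already shows $d_1$ is a multiple of some $d_{\Rm(\eps)}$, with $K=g_1(\vx,\vx)^{1/2}$ and $\kappa$ determined by the ratio of the horizontal and vertical coefficients; the dilation scaling $(\delta_\lambda)_\#\nu^t_q=\nu^{\lambda^2 t}_{\delta_\lambda q}$ then promotes this to $d_t=K\,d_{\Rm(\kappa\sqrt t)}$ for all $t$.

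The hard part will be the two quantitative inequalities $K\ge 2$ and $K/\kappa<\sqrt2$, since these require controlling the actual sizes of $g_1(\vx,\vx)$ and $g_1(\vu,\vu)$ rather than just their structure. For the lower bound $K\ge 2$ I would compute $g_1(\vx,\vx)$ by producing an explicit competitor vector field for the continuity equation driven by the $\vx$-translation of $\h_1$, estimate its $L^2_{\CC}(\nu^1_{0_\H})$-norm using the gradient identity $\int (\vx\log\h_1)^2\h_1\dd\cL=2$ from \eqref{finite_entropy}, and combine this with the fact (as in the boundedness argument ending Proposition~\ref{prop:warp}) that orthogonal projection onto $T_{\nu^1_{0_\H}}\cP_2(\H)$ contracts the norm while still solving the equation; getting the constant $2$ exactly will demand care with the normalization. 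For $K/\kappa<\sqrt2$ the point is that the vertical direction $\vu$ is \emph{cheap} to move along in Wasserstein distance even though it is infinitely expensive in $\dcc$: a strictly displacement along $\vu$ can be realized by a horizontal vector field of finite $L^2_{\CC}$-norm, using the L\'evy-area interpretation of $\h_1$ and the finiteness $\int(\vu\log\h_1)^2\h_1\dd\cL<\infty$ from \eqref{finite_entropy}. Making the strict inequality $K/\kappa<\sqrt2$ precise—rather than the soft bound $\le\sqrt2$—is where I expect the main technical effort to lie, and it is exactly this finiteness of the vertical cost that is responsible for the smoothing phenomenon announced in the theorem.
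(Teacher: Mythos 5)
Your structural argument is essentially the paper's proof: the reduction to $t=1$ via Proposition \ref{scaling}, the definition of $g_1$ through the continuity equation and Lemma \ref{lem:Heisenberg-H-1vsL2} (with integrability supplied by \eqref{finite_entropy} applied to the right-invariant fields), linearity from uniqueness, rotation invariance forcing the diagonal two-parameter form, and the identification of $\tilde d_1$-metric derivatives with $g_1$-lengths through Proposition \ref{prop:heisenberg-ac-curves} all match. Your mechanism for $K/\kappa<\sqrt2$ is also the paper's (Remark \ref{rem:elevator}): there the vertical displacement is realized explicitly by the horizontal, non-gradient field $\vv_s=(\vy\log\rho_s)\vx-(\vx\log\rho_s)\vy$, obtained from $\vu=[\vx,\vy]$, with $\norm{\vv_s}^2_{L^2_{\CC}}=2/t$ by \eqref{finite_entropy}, and the strict inequality comes exactly where you expect it: the field is not of gradient type, so projecting onto $T_{\nu}\cP_2(\H)$ strictly decreases the norm.

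The genuine gap is your plan for $K\geq 2$. Producing a competitor vector field for the continuity equation and invoking the norm contraction of the orthogonal projection can only yield \emph{upper} bounds $g_1(\vx,\vx)\leq\norm{V}^2_{L^2_{\CC}}$, whereas $K=g_1(\vx,\vx)^{1/2}\geq2$ is a \emph{lower} bound; no refinement of that scheme can reach it. (Moreover, the identity $\int(\vx\log\h_1)^2\h_1\dd\cL=2$ that you invoke is precisely the ingredient for the vertical upper bound $\sqrt2$, not for the horizontal coefficient.) A lower bound on $g_1(\vx,\vx)$ requires lower-bounding Wasserstein distances, and the paper obtains it by a different route (Remark \ref{rem:constant_K}): writing $\tilde d_t/\dcc=(d_t/\dcc)\cdot(\tilde d_t/d_t)$ and letting $s\to0$ along the horizontal curve $q_s=(s,0,0)$, where the two factors tend to $K$ and $1$, one identifies $K$ with Kuwada's dual constant $C_2=\sup_{p\neq q}\dcc(p,q)^{-1}W_\H(\nu^t_p,\nu^t_q)$ of the Driver--Melcher $L^2$-gradient estimate, and then the bound $C_2\geq2$ is imported from \cite{DM05} (via the duality in \cite{Ku}) rather than derived by an internal heat-kernel computation. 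Without the identification $K=C_2$ and that citation, your argument has no route to $K\geq2$.
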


The numerical estimates on $\kappa$ and $K$ will be given in Remarks
\ref{rem:elevator} and \ref{rem:constant_K}. The first remark explains
the reason why the convolution procedure allows to recover the
forbidden non-horizontal direction. The second remark relates $K$ to the optimal constant in Wasserstein
contraction estimates for the heat flow. Together with the
convergence results in Proposition \ref{prop:approximation-heisenberg},
this result proves Theorem \ref{thm:main-H} claimed in the
introduction.

\begin{proof}[Proof of Theorem \ref{thm:Heisenberg-main}]
  By Proposition \ref{scaling} it suffices to consider $t=1$ and we
  will do so for the moment. We suppress the index $t=1$ in the
  notation, setting $d=d_1, \tilde d=\tilde d_1$ and $\nu_q=\nu^1_q$.

\medskip
 \emph{Definition of $g$:}~~
  For $q\in\H$ and $a,b,c \in\R$ we define:
  \begin{align}
    g_q\big((a\vx +b\vy+c\vu)(q)\big) = \norm{\vv^{a,b,c}_q}^2_{L^2_{\CC}(\nu_q)}\;,
  \end{align}
  where $\vv^{a,b,c}_q$ is the unique vector field in
  $T_{\nu_q}\cP_2(\H)$ solving the continuity equation
  \begin{align}\label{eq:ce-def-g}
    \partial_s\big|_{s=0}\nu_{q_s} + \diverg(\nu_q\vv^{a,b,c}_q)=0
  \end{align}
  with a curve $(q_s)_s$ such that $q_0=q$ and
  $\dot q_s=(a\vx +b\vy+c\vu)(q_s)$. Existence and uniqueness of
  $\vv^{a,b,c}_q$ are ensured by Lemma \ref{lem:Heisenberg-H-1vsL2}. We
  will show below that $g$ is indeed a metric tensor after polarizing it
  to a bilinear form. First we have to check that the assumptions of
  Lemma \ref{lem:Heisenberg-H-1vsL2} are fulfilled.

  Note that the continuity equation can be rewritten as
  \begin{align}\label{cont_hei}
    \partial_s\big|_{s=0}\rho_s + \diverg(\rho_0\vv^{a,b,c}_q)=0\;,
  \end{align}
  where $\rho_s(p)=\h_1(q_s^{-1}p)$ is the density of
  $\nu_{q_s}$. The derivation of $s\mapsto q_s^{-1}.q_s$ yields
 \begin{align*}
    \frac{\mathrm{d}}{\mathrm{d}s}q_s^{-1} = -q_s^{-1}\dot q_sq_s^{-1} = -(a\hat\vx + b\hat\vy +c\hat\vu)(q_s^{-1})
  \end{align*}
where we use the equalities between left- and right-invariant vector fields at $0_\H=q_s^{-1}.q_s$.
We obtain
  \begin{align*}
    \partial_s\big|_{s=0}\rho_s(p) = - \big((a\hat\vx+b\hat\vy+c\hat\vu)\h_1\big)(q^{-1}p)\;.
  \end{align*}
  Thus, by the left invariance of $\cL$ and \eqref{finite_entropy}, we
  have that
  \begin{align*}
    \int\frac{|\partial_s|_{s=0}\rho_s|^2}{\rho_0}\dd\cL = \int
    \frac{|(a\hat\vx+b\hat\vy+c\hat\vu)\h_1|^2}{\h_1}\dd\cL<\infty\;.
  \end{align*}
  Hence, Lemma \ref{lem:Heisenberg-H-1vsL2} is indeed applicable.

\medskip
\emph{The metric $g$ is Riemannian:}~~
By linearity of \eqref{eq:ce-def-g} and uniqueness of the solution, $\vv^{a,b,c}_q$
depends linearly on $a,b,c$. Thus $g_q(\cdot)$ is quadratic and indeed
gives rise to a metric tensor after polarization. Note moreover, that
$\vv^{a,b,c}_q=D\tau_q(\vv^{a,b,c}_{0_\H})$. This implies that $g$ is
left invariant, i.e.~
\begin{align*}
  g_q\big((a\vx+b\vy+c\vu)(q)\big) = g_{0_\H}\big((a\vx+b\vy+c\vu)(0_\H)\big)\;.
\end{align*}
In particular, $g_q$ depends smoothly on $q$.

\medskip

\emph{Characterization of the Riemannian metrics obtained by
  convolution:}~~ It is readily checked that $g$ is also invariant
under rotations $\rho_\alpha$. Left invariant and rotation invariant
Riemannian metrics $g$ on $\H$ form a two parameter family indexed by
$K,\kappa>0$ defined by $K=g(\vx)^{1/2}=g(\vy)^{1/2}$ and
$K/\kappa=g(\vu)^{1/2}$. Thus, we must have that
$g=K^2\norm{\cdot}^2_{\Rm(\kappa)}$.

\medskip

\emph{The distance $d$ coincides with the Riemannian distance:}~~ 
Let us denote by
$d_g$ the Riemannian distance induced by $g$ and recall that
\begin{align*}
  d_g(p,q) = \inf \int_0^T \sqrt{g_{q_s}(\dot q_s)}\dd s\;,
\end{align*}
where the infimum is taken e.g.~ over all curves $(q_s)_{s\in[0,T]}$
connecting $p$ to $q$ that are Lipschitz with respect to
Euclidean distance. To see that $d$ coincides with $d_g$, it is
sufficient to check that a curve $(q_s)_{s}$ is $\tilde d$-Lipschitz if and only if it is locally Lipschitz in
Euclidean sense and for any such curve we have
\begin{align}\label{eq:metdev}
  |\dot q_s|^2=g_{q_s}(\dot q_s)\;,
\end{align}
where the left hand side is the metric derivative with respect to
$\tilde d$.

So let $(q_s)_{s\in[0,T]}$ be a Euclidean Lipschitz curve such that
$\dot q_s=(a_s\vx+b_s\vy+c_s\vu)(q_s)$. Following the reasoning in the
first part of the proof, we see that the continuity equation
\begin{align*}
\partial_s\nu_{q_s}+\diverg(\nu_{q_s}\vv^{a_s,b_s,c_s}_{q_s})=0\end{align*}
holds with $\norm{\vv^{a_s,b_s,c_s}_{q_s}}^2_{L^2_{\CC}(\nu_{q_s})}=K^2(a_s^2+b_s^2
+ c_s^2/\kappa^2)$. Thus, by the characterization of absolutely continuous curves
in $(\cP_2(\H),W_\H)$, Proposition \ref{prop:heisenberg-ac-curves},
and the definition of $\tilde d$, the curve $(q_s)_s$ is locally
$\tilde d_t$-absolutely continuous with metric derivative
$K\sqrt{a_s^2+b_s^2 + c_s^2/\kappa^2}$, and also Lipschitz. Moreover,
\eqref{eq:metdev} holds by definition of $g$.  Conversely, to see that
any $\tilde d$-Lipschitz curve is also Euclidean Lipschitz,
it suffices to note that the previous argument shows in particular,
that $\tilde d\leq d_1\leq K\cdot d_{\Rm(\kappa)}$ and that $d_{\Rm(\kappa)}$
is locally equivalent to the Euclidean distance.
\end{proof}

\begin{remark}[Estimate on $\kappa$] \label{rem:elevator}
  The crucial feature of the regularized distance $d_t$ as opposed to
  $\dcc$ is that also non-horizontal curves can have finite length.
  This is due to the effect that even when the length of a curve
  $(q_s)_s$ with respect to $\dcc$ is infinite, the length of
  $(\nu^t_{q_s})_s$ with respect to the Wasserstein distance build from
  $\dcc$ may be finite. Let us make this more explicit for the special
  curve $q_s=(0,0,s)$. This curve is not horizontal and has infinite
  length, actually $\dcc(q_s,q_r)=c\cdot \sqrt{|s-r|}$, where
  $c=\dcc(0_\H,q_1)$, which follows from the behavior of $\dcc$ under
  translations and dilations. However, the curve
  $\nu^t_{q_s}=\rho^t_s\cL$ with $\rho_s(p)=\h_t(q_s^{-1}p)$ satisfies
  the continuity equation
  \begin{align*}
    \partial_s\rho_s = -\vu \rho_s = -\left[\vx,\vy\right]\rho_s=-\vx(\vy\rho_s)+\vy(\vx\rho_s) = -\diverg(\rho_s\vv_s)
  \end{align*}
  with the horizontal, but not of gradient type vector field
  $\vv_s=(\vy\log\rho_s)\vx - (\vx\log\rho_s)\vy
  $. Hence, we have
  \begin{align*}
    |\dot \nu_{q_s}|<\norm{\vv_s}_{L^2_{\CC}}= \sqrt{\int\frac{(\vx\h_t)^2+(\vy\h_t)^2}{\h_t}\dd\cL}=\sqrt{\frac{2}{t}} 
  \end{align*}
   by \eqref{finite_entropy}. Therefore the curve $(\nu_{q_s})$ has indeed finite
  $W_\H$-length and $K/\kappa=g_1(\vu)\leq \sqrt{2}$.
\end{remark}

\begin{remark}[Estimate on $K$] \label{rem:constant_K}
  It has been proved by Kuwada \cite{Ku} that the ratio $\tilde{d}_t/d_{cc}$
  is related to a gradient estimate established by Driver and Melcher
  \cite{DM05}. In fact  the constant $C_2$ in this estimate can be dually defined by
  \begin{align*}
    C_2=\sup_{p\neq q}d_{cc}(p,q)^{-1}W(\nu^t_p,\nu^t_q),
  \end{align*}
so that it is in particular independent from $t$.  From \cite{DM05} it is known $C_2\geq 2$ and a conjecture is $C_2=2$, see \cite[Remark
  3.2]{BBBC}. Let us show $K=C_2$, which gives a new understanding of this constant. We have
$$\tilde{d}_t/d_{cc}=(d_t/d_{cc})\times(\tilde{d}_t/d_t)$$
with $d_t/d_{cc}\leq K$ and $\tilde{d}_t/d_t\leq 1$. But for $q_s=(s,0,0)$ we see that that the quotient of the distances between $0$ and $q_s$ tend to $K$ and $1$ respectively as $s$ goes to zero. Therefore $g_1(\vx)=K=C_2\geq 2$.
\end{remark}

We conclude this section with an observation on the limiting behavior
of the convoluted distances as $t\to0$.

\begin{proposition}\label{prop:approximation-heisenberg}
  As $t\to 0$, for all $p,q\in\H$ we have:
  \begin{align*}
  \tilde d_t(p,q) &\rightarrow  \dcc(p,q)\;,\\
   d_t(p,q)       &\rightarrow  K\cdot \dcc(p,q)\;.
  \end{align*}
  Moreover, the metric spaces $(\H,d_t)$ converge to
  $(\H,d_{cc})$ in the pointed Gromov--Hausdorff sense.
\end{proposition}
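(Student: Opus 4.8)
The plan is to treat the three assertions in turn, reducing everything to the explicit identification $d_t=K\cdot d_{\Rm(\kappa\sqrt t)}$ from Theorem \ref{thm:Heisenberg-main} together with the comparison and scaling properties of the Carnot--Carath\'eodory distance. The convergence $\tilde d_t(p,q)\to\dcc(p,q)$ is immediate: by the last bullet of Assumption \ref{ass:iota} the curve $t\mapsto\nu^t_q$ is $W_\H$-continuous with $\nu^0_q=\delta_q$, so $\tilde d_0(p,q)=W_\H(\delta_p,\delta_q)=\dcc(p,q)$, and the continuity of $t\mapsto\tilde d_t(p,q)$ established in Proposition \ref{prop:convergence} gives the claim at $t=0$.

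For the second line I would first prove that the penalized Riemannian distances converge to $\dcc$ with a uniform, quantitative rate. Using the dilation relation from Section \ref{trans_dil}, namely $d_{\Rm(\eps)}(p,q)=\eps\,d_{\Rm(1)}(\delta_{1/\eps}p,\delta_{1/\eps}q)$ together with the analogous identity $\dcc(p,q)=\eps\,\dcc(\delta_{1/\eps}p,\delta_{1/\eps}q)$, I would apply Proposition \ref{compare} (valid at $\eps=1$) to the dilated points and multiply through by $\eps$. This yields
\begin{align*}
  d_{\Rm(\eps)}(p,q)\leq\dcc(p,q)\leq d_{\Rm(\eps)}(p,q)+4\pi\eps
\end{align*}
for all $p,q\in\H$, so $\dcc-d_{\Rm(\eps)}$ is nonnegative and bounded by $4\pi\eps$ uniformly in $p,q$. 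Specializing to $\eps=\kappa\sqrt t$ and multiplying by $K$, Theorem \ref{thm:Heisenberg-main} gives
\begin{align*}
  0\leq K\dcc(p,q)-d_t(p,q)\leq 4\pi K\kappa\sqrt t\;,
\end{align*}
which is the desired pointwise convergence $d_t(p,q)\to K\dcc(p,q)$ and, in fact, a \emph{uniform} rate of order $\sqrt t$.

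For the Gromov--Hausdorff statement I would exploit precisely this uniform estimate. Since the identity map $\mathrm{Id}\colon(\H,d_t)\to(\H,K\dcc)$ is a bijection fixing the basepoint $0_\H$ and distorts distances by at most $\eta_t:=4\pi K\kappa\sqrt t\to0$, it is a basepoint-preserving $\eta_t$-isometry; restricting it to balls around $0_\H$ shows that $(\H,d_t,0_\H)\to(\H,K\dcc,0_\H)$ in the pointed Gromov--Hausdorff topology, properness of the spaces (guaranteed by the lower bound on $\dcc(0_\H,\cdot)$ in Proposition \ref{compare}) ensuring that the relevant balls are compact. It then remains to identify the limit: by the dilation analysis of Section \ref{trans_dil} the map $\delta_K$ is an isometry between $(\H,K\dcc)$ and $(\H,\dcc)$ fixing $0_\H$, so $(\H,K\dcc,0_\H)$ and $(\H,\dcc,0_\H)$ agree up to pointed isometry, and the convergence $(\H,d_t)\to(\H,\dcc)$ follows.

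The point requiring the most care, and conceptually the most delicate, is the Gromov--Hausdorff assertion. As recorded in Remark \ref{rem:dt-continuity}, the map $t\mapsto d_t(p,q)$ is \emph{not} continuous at $t=0$, and indeed $d_t$ converges pointwise to $K\dcc$ with $K\geq2$ rather than to $\dcc$ itself. One therefore cannot invoke the continuity of Proposition \ref{prop:convergence} (which concerns $\tilde d_t$, not the length distance $d_t$) to conclude, and the recovery of $(\H,\dcc)$ as the limit hinges entirely on the scaling symmetry $\delta_K$ of the Heisenberg group. Once the uniform bound of order $\sqrt t$ is in hand, the remaining steps are routine.
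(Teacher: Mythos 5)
Your proof is correct and follows essentially the same route as the paper: pointwise convergence of $\tilde d_t$ via Proposition \ref{prop:convergence}, pointwise and pointed Gromov--Hausdorff convergence of $d_t$ via the identification $d_t=K\,d_{\Rm(\kappa\sqrt{t})}$ from Theorem \ref{thm:Heisenberg-main} and the Riemannian approximation of $\dcc$, and finally the dilation $\delta_K$ to pass from $(\H,K\dcc)$ to $(\H,\dcc)$. The only difference is that where the paper simply cites the standard pointed Gromov--Hausdorff approximation of $(\H,\dcc)$ by the penalized metrics, you derive it quantitatively from Proposition \ref{compare} and the dilation scaling, obtaining the uniform bound $0\leq K\dcc-d_t\leq 4\pi K\kappa\sqrt{t}$ --- a self-contained strengthening of the same argument rather than a different one.
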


\begin{proof}
  The pointwise convergence of $\tilde d_t$ to $\dcc$ follows from
  Proposition \ref{prop:convergence}. The pointwise convergence of
  $d_t$ follows immediately from the explicit formula for
  $d_t=Kd_{\Rm(\kappa\sqrt{t})}$ in Theorem
  \ref{thm:Heisenberg-main}. As the usual approximation of the
  subRiemannian Heisenberg group holds in the pointed
  Gromov--Hausdorff sense, the space $(\H,d_{t})$ tends to $(\H,
  Kd_{cc})$. But as explained in paragraph \ref{trans_dil},
  $(\H,K\cdot\dcc)$ is isometric to $(\H,\dcc)$ via the dilation
  $\delta_{K}$. The last statement follows.
\end{proof}

\bibliographystyle{abbrv} 
\bibliography{ricci}

\end{document}